\numberwithin{equation}{section}
\newtheorem{theorem}{Theorem}[section]
\newtheorem{lemma}[theorem]{Lemma}
\newtheorem{corollary}[theorem]{Corollary}
\newtheorem{proposition}[theorem]{Proposition}
\newtheorem{remark}[theorem]{Remark}
\begin{document}
\title{Optimal Dividend Control with Transaction Costs under Exponential Parisian Ruin for a Refracted L\'{e}vy Risk Model
\thanks{This research is supported by National Natural Science Foundation of China (Grant No.11671204), China Scholarship Council (No.202206840089), and Postgraduate Research \& Practice Innovation Program of Jiangsu Province (Project No.KYCX22\_0392).}}

\author{Zhongqin Gao
{\thanks {School of Mathematics and Statistics, Nanjing University of Science and Technology, China. {\em E-mail:} zhongqingaox@126.com}}
\ and Yan Lv
{\thanks {School of Mathematics and Statistics, Nanjing University of Science and Technology, China. {\em E-mail:} lvyan1998@aliyun.com}}
\ and Jingmin He
{\thanks {School of Science, Tianjin University of Technology, China {\em E-mail:} nkjmhe\_2002@163.com}}}

\date{}
\maketitle
\thispagestyle{firstpage}

\noindent \hrulefill
\medskip
\begin{abstract}\maketitle\baselineskip=18pt
This paper concerns an optimal impulse control problem associated with a refracted L\'{e}vy process, involving the reduction of reserves to a predetermined level whenever they exceed a specified threshold. The ruin time is determined by Parisian exponential delays and limited by a lower ultimate bankrupt barrier. We initially obtained the necessary and sufficient conditions for the value function and the optimal impulse control policy. Given a candidate for the optimal strategy, the corresponding expected discounted dividend function is subsequently formulated in terms of the Parisian refracted scale function, which is employed to measure the expected discounted utility of the impulse control. Then, the optimality of the proposed impulse control is verified using the HJB inequalities, and a monotonicity-based criterion is established to identify the admissible region of optimal thresholds, which serves as the basis for the numerical computation of their optimal levels. Finally, we present applications and numerical examples related to Brownian risk process and Cram\'{e}r-Lundberg process with exponential claims, demonstrating the uniqueness of the optimal impulse strategy and exploring its sensitivity to parameters. \\
\noindent{{\bf Keywords:} Refracted L\'{e}vy Process; Impulse Control; Transaction Costs; Parisian Ruin; Bankrupt Barrier; Exit Time. }\\
{\bf{MR(2010) Subject Classification:}} {60G40, 93E20}
\end{abstract}

\newpage
\section{Introduction}

From the perspective of modern finance insurance, the risk theory has shown a preference for working with spectrally negative L\'{e}vy processes (SNLPs), which are stochastic processes with stationary and independent increments and no positive jumps. This class of models includes Brownian motion and Cram\'{e}r-Lundberg process as special cases. Recently, to accommodate the development of the insurance markets, a more general setting of a refracted L\'{e}vy risk process has been analyzed, that is a L\'{e}vy process whose dynamics change by subtracting a fixed linear drift of suitable size whenever the aggregate process is above a pre-specified level, where the level can be set by the insurer's solvency capital requirement. This process has been proved to exist and is characterized as a skip-free upward strong Markov process, as shown in Kyprianou and Loeffen\cite{Kyprianou20101}.

Classical ruin theory assumes that ruin occurs immediately when the surplus process first goes below zero. Recognizing that companies use financial reserves to minimize the chance of ruin and that a strict definition of ruin can result in lost potential profits, some researchers have focused on the Parisian ruin problem, where the company is allowed to operate under negative surplus for a predetermined period known as Parisian implementation delays. Such Parisian ruin was initially investigated by Dassios and Wu\cite{Dassios2008} in the context of the classical Cram\'{e}r-Lundberg model. Lately, Parisian ruin under various risk models has gained significant attention, with the two main research types being Parisian deterministic delays and Parisian stochastic delays. For Parisian deterministic delays, ruin occurs when the risk process stays negative continuously for a fixed period of time. This has been extensively studied, for example, Loeffen, Czarna and Palmowski\cite{Loeffen2013}, Lkabous, Czarna and Renaud\cite{Lkabous2017} and Loeffen, Palmowski and Surya\cite{Loeffen2018} discuss related ruin problems, while Czarna and Palmowski\cite{Czarna2014} and Yang, Sendova and Li\cite{Yang2020} address related dividend problems.

As is widely known in the applied probability literature, the deterministic approach has limitations in processing financial risk problems with Parisian ruin, leading to a natural need to define Parisian delays differently, such as the so-called Parisian stochastic delays. For this delay, it is proposed that each excursion of the surplus process below zero is accompanied by a positive independent and identically distributed (iid) random variable, and ruin occurs as soon as an excursion stays below zero longer than the predefined random time. More recent contribution to this research include, Landriault, Renaud, and Zhou\cite{Landriault2014} for mixed Erlang delays, and Baurdoux, Pardo, P\'{e}rez and Renaud\cite{Baurdoux20162}, Renaud\cite{Renaud2019} and Renaud\cite{Renaud2024} for exponential delays. Incidentally, some researchers analyzed Parisian implementation delays in dividend payments instead of applying them to recognizing ruin, see in Cheung and Wong\cite{Cheung2017}. In addition, it is unrealistic to assume that a company's surplus can decrease without bounds. Therefore, incorporating a lower ultimate bankruptcy barrier into the Parisian ruin model could serve as a better and more efficient risk measure. Recent work in this area can be found in Czarna and Renaud\cite{Czarna20162} and Frostig and Keren-Pinhasik\cite{Frostig2020}.

The optimization of dividend strategies is a critical focus in financial research, aimed at maximizing the expected present value of all dividends to be distributed to shareholders until the company is ruined or bankrupt. The concept of the optimal dividend problem was first proposed by De Finetti\cite{De1957} within a binomial model. Subsequently, many researchers have thoroughly researched this complex issue, as detailed in the works of Avanzi and Gerber\cite{Avanzi2008}, Albrecher and Thonhauser\cite{Albrecher2008}, Bayraktar, Kyprianou and Yamazaki\cite{Bayraktar2013}, and Avram, Palmowski and Pistorius\cite{Avram2015}. In company operations, including transaction costs in dividend payments is more practical and effective due to related expenses. Based on transaction cost economics, the dividend strategy evolves into an impulse control strategy, comprising a sequence of intervention times and control actions. Relevant studies include Loeffen\cite{Loeffen2009} on SNLPs, and Thonhauser and Albrecher\cite{Thonhauser2011} on the Cram\'{e}r-Lundberg process. Notably, Czarna and Kaszubowski\cite{Czarna2020} consider the optimality of impulse control problem in refracted L\'{e}vy model with Parisian deterministic delays and transaction costs.

The risk model to be discussed in this paper is given below. Let $X=\{X_t, t\geq0\}$ be a SNLP with characteristic triplet $(\gamma,\sigma,\nu)$ defined on the filtered probability space $(\Omega, \mathcal{F}, \mathbb{F}=\{\mathcal{F}_t, t\geq0\}, P)$, where $\sigma\geq0$ is the diffusion coefficient, $\gamma\in \mathbb{R}$ is the drift coefficient and $\nu$ is the L\'{e}vy measure that satisfies $\nu(-\infty, 0)=0$ and $\int_{0}^{\infty}(1\wedge z^2)\nu(\mathrm{d}z)< \infty$. The law of $X$ is denote by $P_x$ when it starts at $x\in \mathbb{R}$, and for convenience, $P$ is used instead of $P_0$. The associated expectation operators are denoted as $E_x$ and $E$, respectively. The refracted L\'{e}vy process $R=\{R_t: t\geq0\}$ is described by
\begin{eqnarray}\label{model1}
	R_t=X_t-\delta \int_{0}^{t}\mathbf{1}\{R_s\geq b\}\mathrm{d}s,\ \ \ \ t\geq 0, \ \ \delta\geq 0,
\end{eqnarray}
where $b\geq0$ is a threshold level, $\delta\geq0$ is a refraction parameter and $\mathbf{1}\{A\}$ is the indicator function of event $A$.
Economically, the threshold $b\geq 0$ represents a solvency or capital management level (e.g., a regulatory intervention threshold) at which the insurer switches between operating regimes, while the refraction parameter $\delta\geq 0$ quantifies the strength of the adjustment made in the high-surplus regime. When $R_t\geq b$, the company is viewed as well capitalised and the surplus is modified at rate $\delta$ (e.g. through active investment, intensified risk mitigation, or an additional dividend or reinsurance outflow), whereas for $R_t<b$ the surplus follows the baseline risk process $Y=\{Y_t=X_t-\delta t, t\geq0\}$ without such adjustments. Thus $b$ encodes the trigger at which capital management actions are activated, and $\delta$ measures the intensity of these actions, allowing the model to capture a range of investment, dividend, and restructuring policies within a unified framework.

The ruin discussed here includes an exponential implementation delay and a lower ultimate bankrupt barrier, as detailed below. Assume that each time the process $R$ down-crosses zero, it is accompanied by an independent exponentially distributed random variable $\xi$ with parameter $m\geq0$. Additionally, we set a lower ultimate bankrupt barrier at $-l<0$. Ruin occurs either at the first time that the exponential delay expires before the surplus becomes positive, or at the first time that the surplus down-crosses $-l$, whichever occurs first. Formally, the time of Parisian ruin with the ultimate bankrupt barrier is denoted by
\begin{align*}
&T:=\tau_p\wedge k_{-l}^-,
\end{align*}
with $\tau_p:=\inf\{t\geq 0: R_t<0\ \mbox{and} \ t>g_t+e_m^{g_t}\}$, where $g_t=\sup\{0\leq s< t: R_s\geq 0\}$ is the left-end point of a negative excursion, $e_m^{g_t}$ represents the Parisian exponential delay related to $g_t$ and is independent of $R$, and $k_{-l}^-$ is the first time to down-cross $-l$ for $R$.

Assume a company with risk process $R$ and ruin time $T$ pays dividends to its shareholders according to a dividend strategy $\pi=\{D_t^{\pi}\}_{t\geq 0}$, where $D_t^{\pi}$ is the cumulative dividend amount up to time $t\geq 0$, defined as $D_t^{\pi}=\sum_{0\leq s< t}\Delta D_s^{\pi}$, with $\Delta D_s^{\pi}=D_{s}^{\pi}-D_{s-}^{\pi}$ representing the jump of the dividend process $\{D_t^{\pi}\}_{t\geq 0}$ (which is non-decreasing, c\`{a}dl\`{a}g, $\mathbb{F}$-adapted, starts at zero, and is a pure jump process) at time $s$, and $D_0^{\pi}=0$. Then, we redefine the controlled risk process $U^{\pi}=\{U_t^{\pi}\}_{t\geq 0}$ as
\begin{eqnarray*}
	U_t^{\pi}=
X_t-\delta \int_{0}^{t}\mathbf{1}\{U_s^{\pi}\geq b\}\mathrm{d}s -D_t^{\pi},
\end{eqnarray*}
with $U_0^{\pi}=x$. The time of ruin for $U^{\pi}$ is defined by $T^{\pi}:=\tau_p^{\pi}\wedge k_{-l}^{\pi}$, where $\tau_p^{\pi}:=\inf\{t\geq 0: U_t^{\pi}<0\ \mbox{and} \ t>g_t^{\pi}+e_m^{g_t^{\pi}}\}$ with $g_t^{\pi}=\sup\{0\leq s< t: U_s^{\pi}\geq 0\}$, and $k_{-l}^{\pi}$ is the first time to down-cross $-l$ for $U^{\pi}$. Moreover, incorporating transaction costs into the dividend strategy $\pi$, meaning that each dividend payment inevitably produces a fixed transaction fee $\beta>0$, leads to the above dividend strategy being referred to as an impulse control strategy. This strategy supports the assumption that the dividend process is purely a jump process. The expected discounted utility of the impulse control is measured by the expected discounted dividend function, as defined below, for $x\in [-l, \infty)$,
\begin{align}\label{funcvpix}
V_{\pi}(x)
&=E_x\big[\int_0^{T^{\pi}} e^{-q t} \mathrm{d}(D_t^{\pi}-\sum_{0\leq s\leq t}\beta \mathbf{1}\{\Delta D_s^{\pi}>0\})\big]\nonumber \\
&=E_x\big[\sum_{0\leq t\leq T^{\pi}}e^{-q t}(\Delta D_t^{\pi}-\beta \mathbf{1}\{\Delta D_t^{\pi}>0\})\big],
\end{align}
where $q\geq0$ is the discount factor. A strategy $\pi$ is called admissible if $\Delta D_t^{\pi} \geq 0$ and, whenever a dividend is paid, the jump size exceeds a minimum threshold $\beta$, that is, $\Delta D_t^{\pi} >\beta \mathbf{1}\{\Delta D_t^{\pi}>0\}$ for any $t\in [0, T^{\pi})$, and the surplus does not fall below zero due to dividend payments, i.e.
\begin{align}\label{admiss}
&U_{t-}^{\pi}-\Delta D_t^{\pi}\geq 0.
\end{align}
Let $\Pi$ denote the set of all admissible dividend strategies. Our main goal is to find the value function $V_*$ defined by
\begin{align}\label{optpro}
&V_*(x)=\sup_{\pi\in\Pi}V_{\pi}(x),
\end{align}
and the optimal impulse control $\pi^*\in\Pi$ such that $V_{\pi^*}(x)=V_*(x)$ for all $x\in [-l, \infty)$. This problem we refer to as the impulse control problem, and researching it generalizes the ordinary-ruin concept, balances solvency and profitability, and provides insights into the behavior of the optimal impulse strategy with model parameters. Prior to the main analysis, we present an alternative but equivalent formulation of the function $V_{\pi}$, its proof is given in Appendix \ref{propequexp22}.
\begin{proposition}\label{propequexp}
The function $V_{\pi}$ defined in \eqref{funcvpix} can be equivalently expressed as
\begin{align}\label{vj1j200}
V_{\pi}(x)
&=E_x\big[\int_0^{\tau_p^{\pi} \wedge k_{-l}^{\pi}} e^{-q t} \mathrm{d} D_{\beta}^{\pi}(t)\big]
=E_x\big[\int_0^{k_{-l}^{\pi}} e^{-q t - L^{\pi}(t)} \mathrm{d} D_{\beta}^{\pi}(t)\big],
\end{align}
where $D_{\beta}^{\pi}(t):=\sum_{0\leq s\leq t} \Delta D_s^{\pi}-\beta \mathbf{1}\{\Delta D_s^{\pi}>0\}$ and $L^{\pi}(t) := \int_0^t m \mathbf{1}\{U_s^{\pi} <0\} \mathrm{d} s$.

\end{proposition}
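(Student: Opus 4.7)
The first equality is a direct rewriting of \eqref{funcvpix}: because $D^{\pi}$ is a pure-jump process, the differential $d(D_t^{\pi} - \sum_{0\leq s\leq t}\beta\mathbf{1}\{\Delta D_s^{\pi}>0\})$ coincides termwise with $d D_{\beta}^{\pi}(t)$, and by definition $T^{\pi} = \tau_p^{\pi} \wedge k_{-l}^{\pi}$. So the step from \eqref{funcvpix} to the middle expression in \eqref{vj1j200} amounts only to unpacking notation.

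The substantive step is the second equality. The plan is to replace the hard indicator $\mathbf{1}\{t<\tau_p^{\pi}\}$ inside the expectation by its conditional mean given the surplus path (and the dividend strategy), exploiting the memoryless structure of the exponential Parisian delays. Enumerate the maximal excursions of $U^{\pi}$ below zero that have begun by time $t<k_{-l}^{\pi}$: the completed ones are intervals $[g_i,d_i]$ with $d_i\leq t$, and if $U_t^{\pi}<0$ there is a current partial excursion $[g_t^{\pi},t]$. Each such excursion carries its own independent $\mathrm{Exp}(m)$-clock $e_m^{g_i}$, independent of the path and, by the $\mathbb{F}$-adaptedness of $\pi$, of the dividend strategy. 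The event $\{\tau_p^{\pi}>t\}$ is precisely the event that every completed clock outlasted its excursion, $e_m^{g_i}>d_i-g_i$, and (if applicable) that the current clock is still alive, $e_m^{g_t^{\pi}}>t-g_t^{\pi}$. Joint independence of the clocks then yields
\begin{align*}
P\!\left(\tau_p^{\pi}>t\mid U^{\pi},D^{\pi}\right)
&= \prod_i e^{-m(d_i-g_i)}\cdot e^{-m(t-g_t^{\pi})\mathbf{1}\{U_t^{\pi}<0\}}\\
&= \exp\!\Big(-m\int_0^t\mathbf{1}\{U_s^{\pi}<0\}\,ds\Big)
= e^{-L^{\pi}(t)},
\end{align*}
since the total duration of the completed sub-excursions plus (when relevant) the age of the current one equals the occupation time of $U^{\pi}$ in $(-\infty,0)$ up to $t$.

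Armed with this identity, a Fubini argument applied to the non-negative random measure $d D_{\beta}^{\pi}(t)$ on $[0,k_{-l}^{\pi}]$ gives
\begin{align*}
E_x\!\Big[\int_0^{\tau_p^{\pi}\wedge k_{-l}^{\pi}} e^{-q t}\,d D_{\beta}^{\pi}(t)\Big]
&= E_x\!\Big[\int_0^{k_{-l}^{\pi}} e^{-q t}\mathbf{1}\{t<\tau_p^{\pi}\}\,d D_{\beta}^{\pi}(t)\Big]\\
&= E_x\!\Big[\int_0^{k_{-l}^{\pi}} e^{-q t}\,P\!\left(\tau_p^{\pi}>t\mid U^{\pi},D^{\pi}\right)d D_{\beta}^{\pi}(t)\Big]\\
&= E_x\!\Big[\int_0^{k_{-l}^{\pi}} e^{-q t-L^{\pi}(t)}\,d D_{\beta}^{\pi}(t)\Big],
\end{align*}
which is the desired equality.

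The main obstacle is the rigorous justification of the conditional-probability identity: one has to disintegrate the joint law of the exponential clocks $\{e_m^{g_i}\}_i$ along the (random) excursion structure of $U^{\pi}$ and verify that, conditionally on $(U^{\pi},D^{\pi})$, these clocks are jointly independent $\mathrm{Exp}(m)$ random variables, so that the survival event $\{\tau_p^{\pi}>t\}$ factorises into an independent product of exponential tails. Once this independence-and-disintegration step is in place, the remaining computation is the elementary occupation-time identity $\sum_i(d_i-g_i)+(t-g_t^{\pi})\mathbf{1}\{U_t^{\pi}<0\}=\int_0^t\mathbf{1}\{U_s^{\pi}<0\}\,ds$, and everything else reduces to Fubini.
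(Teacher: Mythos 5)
Your proof is correct and takes a genuinely more direct route than the paper's. Both arguments hinge on the same key identity
\[
P\big(\tau_p^{\pi}>t\,\big|\,\mathcal{F}^{U^{\pi}}_{t'}\big)=e^{-L^{\pi}(t)},\qquad t'\geq t,
\]
which you derive heuristically from the memoryless excursion decomposition, while the paper takes it as given (quoting Landriault, Renaud and Zhou, displayed as~\eqref{proptaul} in the appendix). Where you diverge is in how this identity is used. You condition once, on the full path $\sigma$-algebra $\sigma(U^{\pi},D^{\pi})$, pull $e^{-L^{\pi}(t)}$ under the $\mathcal{G}$-measurable measure $\mathrm{d}D_{\beta}^{\pi}$ via Tonelli, and are done in one stroke. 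The paper instead introduces an auxiliary exponential variable $e_q\sim\mathrm{Exp}(q)$ to convert the discount $e^{-qt}$ into a killing event, splits the expectation into $J_1+J_2$ according to whether $e_q\wedge k_{-l}^{\pi}<\tau_p^{\pi}$ or not, applies integration by parts to $e^{-L^{\pi}(t)}D_{\beta}^{\pi}(t)$ (using the continuity of $L^{\pi}$) inside $J_1$, uses the second part of~\eqref{proptaul} (the conditional density of $\tau_p^{\pi}$) to evaluate $J_2$, and then observes the two residual terms cancel. Your version avoids both the $e_q$-device and the density of $\tau_p^{\pi}$; it needs only the survival probability. The trade-off is that you must be explicit about the conditioning $\sigma$-algebra: the Fubini/Tonelli interchange is valid because $k_{-l}^{\pi}$ and $D_{\beta}^{\pi}(\cdot)$ are $\sigma(U^{\pi},D^{\pi})$-measurable, the integrand is non-negative (admissibility forces $\Delta D_s^{\pi}>\beta$ on $\{\Delta D_s^{\pi}>0\}$, so $\mathrm{d}D_{\beta}^{\pi}\geq 0$), and — crucially — the exponential clocks are independent of $\mathbb{F}$, hence of $(U^{\pi},D^{\pi})$, so that conditioning does not degenerate $P(\tau_p^{\pi}>t\,|\,\mathcal{G})$ into an indicator. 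You flag this last point as the obstacle, which is fair; the paper silently absorbs it into the citation. One small point worth making explicit: passing from $\int_0^{\tau_p^{\pi}\wedge k_{-l}^{\pi}}$ to $\int_0^{k_{-l}^{\pi}}\mathbf{1}\{t<\tau_p^{\pi}\}$ discards a possible atom of $\mathrm{d}D_{\beta}^{\pi}$ at $\tau_p^{\pi}$ itself, but this is harmless because at Parisian ruin $U^{\pi}_{\tau_p^{\pi}-}<0$ and the admissibility constraint~\eqref{admiss} forces $\Delta D_{\tau_p^{\pi}}^{\pi}=0$.
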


The rest of this paper is organized as follows. In Section \ref{sec 2}, we introduce the scale functions for the SNLP and its refracted counterpart, and provide some known results related to the exit problem. In Section \ref{sec 3}, we first provide semi-explicit expression for the exit problem with Parisian exponential delays and a lower bankruptcy barrier, then derive the necessary and sufficient conditions for the value function, which also apply to the optimality of the impulse control strategy.
Section \ref{sec 4} discusses the optimality of the so-called impulse strategy $\pi_{(c_1, c_2)}$, which intend to reduce the surplus to a certain level $c_1$ whenever it exceeds another level $c_2$.
In Section \ref{sec 5}, numerical examples involving the refracted Brownian motion and the refracted Cram\'{e}r-Lundberg process with exponential claims are presented. We prove the existence and uniqueness of the optimal impulse strategy for these processes and summarize several valuable conclusions.

\section{Preliminaries}\label{sec 2}
\label{sec:preliminaries}
For a risk process modelled by a SNLP, most quantities of interest in risk theory are often expressed using the so-called scale functions. In this section, the definitions and properties of scale functions for SNLPs and the refracted L\'{e}vy process are given. Further, some arguments concerning fluctuation identities involved in the exit problem are reviewed, as they are necessary to address the risk issues.
Lastly, we provide several auxiliary functions for use in the subsequent results.

\subsection{Scale function}
Recall that $X=\{X_t, t\geq 0\}$ is a SNLP with L\'{e}vy triplet $(\gamma,\sigma,\nu)$. To avoid degenerate cases, the case where $X$ has monotone paths should be excluded. Its Laplace exponent $\psi: [0,\infty)\rightarrow \mathbb{R}$ exists and is defined by, for all $\lambda\geq0$,
$$\psi(\lambda):=\log(E[e^{\lambda X_{1}}])=\frac{1}{2}\sigma^2\lambda^2+\gamma \lambda +\int_{0}^{\infty}(e^{-\lambda z}-1+\lambda z\mathbf{1}\{0<z<1\})\nu(\mathrm{d}z).$$
Notice that $\psi{'}(0+)=E[X_1]$. The process $X$ has paths of bounded variation (BV) if and only if $\sigma=0$ and $\int_{0} ^{1}z\nu(\mathrm{d}z)<\infty$, for this case, $X$ can be written as $X_t=\mu t-S_t$, where $\mu=\gamma+\int_{0} ^{1}z\nu(\mathrm{d}z)>0$ the so-called drift of $X$, and $S=\{S_t, t\geq0\}$ is a driftless subordinator such as a gamma process or a compound Poisson process with positive jumps. Conversely, if $\sigma>0$, then $X$ has paths of unbounded variation (UBV). For $q\geq0$, define $\phi(q):=\sup\{\lambda\geq 0: \psi(\lambda)=q\}$. From Kyprianou\cite{Kyprianou20142} Chapter 8.1, if $\psi{'}(0+)\geq 0$, then $\phi(0)=0$, otherwise $\phi(0)>0$.

The $q$-scale function $W^{(q)}(x)$ of $X$ is defined on $[0,\infty)$ by its Laplace transform
\begin{eqnarray*}
	\int_{0} ^{\infty}e^{-\lambda x}W^{(q)}(x)\mathrm{d}x=\frac{1}{\psi(\lambda)-q},  \ \ \mbox{for}\ \lambda>\phi(q),
\end{eqnarray*}
which is unique, positive, strictly increasing and continuous for $x> 0$ and is further continuous for $q\geq 0$. We define $W^{(q)}(x)=0$ for all $x\in(-\infty,0)$, which enables to extend the definition domain to the entire real axis. Considering the possible atoms at the origin, it shows that $W^{(q)}(0+)=1/\mu$ if $X$ has paths of BV, and if not, $W^{(q)}(0+)=0$ with the definition $W^{(q)}(0+):=\lim_{x\downarrow0}W^{(q)}(x)$, as well as $W^{(q)}{'}(0+)=(\nu(0, \infty)+q)/\mu^2$ when $\sigma=0$ and $\nu(0, \infty)<\infty$, and $W^{(q)}{'}(0+)=2/\sigma^2$ when $\sigma>0$.
For discussion on the differentiability of the scale function, see Kuznetsov, Kyprianou and Rivero\cite{Kuznetsov2012}.

Define $Y=\{Y_t=X_t-\delta t, t\geq0\}$ is a SNLP with L\'{e}vy triplet $(\gamma-\delta,\sigma,\nu)$, its Laplace exponent is given by $\psi(\lambda)-\delta \lambda$. When $X$ has paths of BV, we assume $0\leq \delta<\mu$, meaning a part of drift of $X$ will be retained. In fact, $X$ and $Y$ share many properties except for those affected by the linear part of the L\'{e}vy process. The scale function of $Y$ on $[0, \infty)$, denoted as $\mathbb{W}^{(q)}$, is defined by
\begin{eqnarray*}
	&&\int_{0} ^{\infty}e^{-\lambda x}\mathbb{W}^{(q)}(x)\mathrm{d}x=\frac{1}{\psi(\lambda)-\delta\lambda-q},  \ \ \mbox{for}\ \lambda>\varphi(q),
\end{eqnarray*}
where $\varphi(q):=\sup\{\lambda\geq0: \psi(\lambda)-\delta\lambda=q\}$.

For the refracted L\'{e}vy process $R=\{R_t, t\geq 0\}$, it satisfies the condition of positive security loading to ensure that the probability of ruin is strictly less than one, namely, $E[Y_1]=\psi{'}(0+)-\delta\geq 0$. The scale function of $R$, represented by $w^{(q)}(x;a)$, is defined as, for $q\geq0$ and $x, a\in\mathbb{R}$,
\begin{eqnarray}\label{1wqde}
	&&w^{(q)}(x;a):=W^{(q)}(x-a)+\delta \mathbf{1}{\{x\geq b\}}\int_{b} ^{x}\mathbb{W}^{(q)}(x-y)W^{(q)}{'}(y-a)\mathrm{d}y.
\end{eqnarray}
When $x<b$, one has $w^{(q)}(x;a)=W^{(q)}(x-a)$. We refer to
Kyprianou\cite{Kyprianou20142} for a recent overview of scale function.
We also recall the following useful identity, taken from Loeffen, Renaud and Zhou\cite{Loeffen2014}, for $q, p\geq 0$ and $x\in\mathbb{R}$,
\begin{eqnarray}\label{2}
	&&p\int_{0}^{x}W^{(q)}(x-y)W^{(q+p)}(y)\mathrm{d}y=W^{(q+p)}(x)-W^{(q)}(x).
\end{eqnarray}
Worth noting, $R$ is now a Feller process rather than a L\'{e}vy process, as it is no longer spatial homogeneous, as shown in Chapter 11 of Khoshnevisan and Schilling\cite{Khoshnevisan2016}. Let $\mathbb{C}^k(\mathbb{A})$ be the space of $k$-times continuously differentiable functions defined on the set $\mathbb{A}$.

\begin{proposition}\label{prop 2.1}
	In general, $w^{(q)}(\cdot; a)$ is a.e. continuously differentiable for any $q\geq 0$ and $a\in\mathbb{R}$. More precisely, if we assume $W^{(q)}(\cdot - a)\in \mathbb{C}^1((a, \infty))$ for $X$ which is of BV (a necessary and sufficient condition for this is that the L\'{e}vy measure has no atoms, see in Kyprianou, Rivero and Song\cite{Kyprianou2010}), then $w^{(q)}(\cdot; a)\in \mathbb{C}^1((a, \infty)\setminus\{ b\})$. In contrast, if $X$ which is of UBV, then $w^{(q)}(\cdot; a)\in \mathbb{C}^1((a, \infty))$. Moreover, if $X$ has a Gaussian component $\sigma>0$, then $w^{(q)}(\cdot; a)\in \mathbb{C}^2((a, \infty)\setminus \{b\})$.
From Czarna, P\'{e}rez, Rolski and Yamazaki\cite{Czarna2019}, its derivative satisfies $w^{(q)}{'}(x; a)=W^{(q)}{'}(x-a)$ for $x<b$, and for $x\geq b$,
	\begin{eqnarray*}
		w^{(q)}{'}(x; a)=\big(1+\delta \mathbb{W}^{(q)}(0)\big)W^{(q)}{'}(x-a)+\delta \mathbf{1}\{x\geq b\} \int_{b}^x \mathbb{W}^{(q)}{'}(x-y)W^{(q)}{'}(y-a)\mathrm{d}y.
	\end{eqnarray*}
\end{proposition}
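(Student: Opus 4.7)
The plan is to analyze the definition \eqref{1wqde} of $w^{(q)}(\cdot;a)$ in the two regions $x<b$ and $x>b$, and then to examine the matching of one-sided derivatives at the free boundary $x=b$. Regularity on each side will follow from the known smoothness of $W^{(q)}$ and $\mathbb{W}^{(q)}$, while the behaviour at $x=b$ will be governed by the single number $\mathbb{W}^{(q)}(0)$, which separates the BV and UBV regimes.

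For $x<b$ the definition reduces to $w^{(q)}(x;a)=W^{(q)}(x-a)$, so every claim (continuous differentiability in the three subcases, and twice continuous differentiability when $\sigma>0$) transfers directly from the corresponding property of $W^{(q)}$ and gives $w^{(q)}{'}(x;a)=W^{(q)}{'}(x-a)$. For $x>b$ I would set
\[
h(x):=\int_{b}^{x}\mathbb{W}^{(q)}(x-y)\,W^{(q)}{'}(y-a)\,\mathrm{d}y
\]
and apply Leibniz's rule. Under the stated hypotheses, $\mathbb{W}^{(q)}$ is $\mathbb{C}^{1}$ on $(0,\infty)$ (automatic in the UBV case, and guaranteed in the BV case by the non-atomic L\'{e}vy measure assumption) and the integrand is continuous in both arguments, so differentiation under the integral sign yields
\[
h'(x)=\mathbb{W}^{(q)}(0)\,W^{(q)}{'}(x-a)+\int_{b}^{x}\mathbb{W}^{(q)}{'}(x-y)\,W^{(q)}{'}(y-a)\,\mathrm{d}y,
\]
and combining this with $\tfrac{\mathrm{d}}{\mathrm{d}x}W^{(q)}(x-a)=W^{(q)}{'}(x-a)$ produces the expression for $w^{(q)}{'}(x;a)$ asserted in the proposition.

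To decide $\mathbb{C}^{1}$ membership at the contact point $b$, I would compare the one-sided derivatives: the left derivative equals $W^{(q)}{'}(b-a)$, while letting $x\downarrow b$ in the expression above gives $(1+\delta\mathbb{W}^{(q)}(0))\,W^{(q)}{'}(b-a)$. In the UBV case $\mathbb{W}^{(q)}(0)=0$, so the two derivatives agree and $w^{(q)}(\cdot;a)\in\mathbb{C}^{1}((a,\infty))$; in the BV case $\mathbb{W}^{(q)}(0)=1/(\mu-\delta)>0$, so a genuine jump appears at $b$, giving at best $\mathbb{C}^{1}((a,\infty)\setminus\{b\})$. The $\mathbb{C}^{2}$ claim under $\sigma>0$ will follow by differentiating the derivative formula once more on $(a,\infty)\setminus\{b\}$ and invoking the fact that a Gaussian component renders $W^{(q)}$ and $\mathbb{W}^{(q)}$ twice continuously differentiable on $(0,\infty)$ (via Kuznetsov, Kyprianou and Rivero\cite{Kuznetsov2012}).

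The main obstacle I anticipate is the rigorous justification of Leibniz's rule at the integration endpoint $y=b$: in the BV case this requires $W^{(q)}{'}(\cdot-a)$ to be continuous up to $y=b$, which is precisely where the no-atoms hypothesis on $\nu$ enters, while in the UBV case a short approximation argument using dominated convergence on a neighbourhood of $y=x$ is needed because $\mathbb{W}^{(q)}{'}$ may blow up at the origin. A secondary but necessary bookkeeping step is handling the indicator $\mathbf{1}\{x\geq b\}$ near $x=b$, which amounts to noting that the integral term vanishes continuously as $x\downarrow b$ and therefore only contributes to the right derivative, producing exactly the jump $\delta\mathbb{W}^{(q)}(0)W^{(q)}{'}(b-a)$ that drives the BV/UBV dichotomy above.
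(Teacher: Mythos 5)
Your argument is correct and essentially reconstructs the standard proof underlying the cited references; the paper itself offers no proof of Proposition~\ref{prop 2.1}, deferring the derivative formula to Czarna, P\'{e}rez, Rolski and Yamazaki \cite{Czarna2019} and the smoothness criterion to Kyprianou, Rivero and Song \cite{Kyprianou2010}. Your computation via Leibniz's rule for $h(x)=\int_b^x\mathbb{W}^{(q)}(x-y)W^{(q)}{'}(y-a)\,\mathrm{d}y$, the identification of the one-sided derivative mismatch $\delta\mathbb{W}^{(q)}(0)W^{(q)}{'}(b-a)$ at $x=b$, and the BV/UBV dichotomy driven by $\mathbb{W}^{(q)}(0+)=1/(\mu-\delta)>0$ versus $\mathbb{W}^{(q)}(0+)=0$ are all correct, and you have rightly flagged the two genuine technical points --- justifying the boundary term under Leibniz when $\mathbb{W}^{(q)}$ has a jump at the origin (BV case), and the local integrability of $\mathbb{W}^{(q)}{'}$ near the origin (UBV with $\sigma=0$) --- that a fully rigorous write-up would need to discharge, e.g.\ by an approximation and dominated-convergence step as you suggest.
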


\subsection{Fluctuation identity}

The exit problem is one of the most important issues studied in the theory of L\'{e}vy process, and some related results are introduced as follows. For any $a,c\in\mathbb{R}$, the first passage stopping times are denoted by
\begin{align*}
\tau_a^- & =\inf\{t>0: X_t<a\},\ \ \ \ \ \ \ \ \ \ \tau_c^+=\inf\{t>0: X_t\geq c\},\\
\upsilon_a^- & =\inf\{t>0: Y_t<a\},\ \ \ \ \ \ \ \ \ \ \upsilon_c^+=\inf\{t>0: Y_t\geq c\},\\
k_a^- & =\inf\{t>0: R_t<a\},\ \ \ \ \ \ \ \ \ \  k_c^+=\inf\{t>0: R_t\geq c\},
\end{align*}
with the convention $\inf\emptyset=\infty$. It is well known (see, Kyprianou and Loeffen\cite{Kyprianou20101}) that, for $q\geq0$ and $a\leq x, b\leq c$, the solutions to the two-sided exit problem for $X$, $Y$ and $R$ are provided, respectively, by
\begin{eqnarray}
&E_x[e^{-q\tau_c^+}\mathbf{1}\{\tau_c^+<\tau_a^-\}]=\frac{W^{(q)}(x-a)}{W^{(q)}(c-a)},\label{3}\\
&E_x[e^{-q\upsilon_c^+}\mathbf{1}\{ \upsilon_c^+<\upsilon_a^-\}]=\frac{\mathbb{W}^{(q)}(x-a)}{\mathbb{W}^{(q)}(c-a)},\label{4}\\
&E_x[e^{-qk_c^+}\mathbf{1}\{k_c^+<k_a^-\}]=\frac{w^{(q)}(x; a)}{w^{(q)}(c; a)}.\label{5}
\end{eqnarray}
To study the Laplace transform of the time to Parisian ruin with a bankruptcy barrier, we need to consider the discounted expectation of the joint distribution of the time to down-cross a level and the scale function evaluated at the surplus value of the down-shoot.
From Kyprianou\cite{Kyprianou20142} and Loeffen, Renaud and Zhou\cite{Loeffen2014}, some technical fluctuation identities are presented for later use.
For $q, p\geq 0$, $a\geq 0$ and $-a\leq x\leq b$, we have
\begin{align}\label{6} &E_x[e^{-q\tau_0^-}\mathbf{1}\{{\tau_0^-<\tau_b^+\}}W^{(q+p)}(X_{\tau_0^-}+a)]
=g^{(q+p,q)}(x,a)-\frac{W^{(q)}(x)}{W^{(q)}(b)}g^{(q+p,q)}(b,a),
\end{align}
where
\begin{align}\label{7}
	g^{(q+p,q)}(x,a)
&=W^{(q+p)}(x+a)-p\int_0^{x}W^{(q)}(x-y)W^{(q+p)}(y+a)\mathrm{d}y \nonumber \\
&=W^{(q)}(x+a)+p\int_0^{a}W^{(q)}(x+a-y)W^{(q+p)}(y)\mathrm{d}y.
\end{align}
For $q, p\geq 0$, $b\geq 0$ and $b\leq x \leq c$, we also get
\begin{align}\label{8}
	& E_x[e^{-q\upsilon_b^-}\mathbf{1}\{{\upsilon_b^-<\upsilon_c^+\}}W^{(q+p)}(Y_{\upsilon_b^-})]
	=h^{(q+p,q)}(x,b)-
	\frac{\mathbb{W}^{(q)}(x-b)}{\mathbb{W}^{(q)}(c-b)}h^{(q+p,q)}(c,b),
\end{align}
where
$h^{(q+p,q)}(x,b)=w^{(q+p)}(x;0)-p\int_b^{x}\mathbb{W}^{(q)}(x-y)w^{(q+p)}(y;0)\mathrm{d}y$.

Lastly, several auxiliary functions are provided. For $q, p\geq 0$ and $x, a\in\mathbb{R}$, the Parisian refracted $(q, p)$-scale function is defined as
\begin{align}\label{9}
	\vartheta^{(q+p,q)}(x, a)
&:= w^{(q)}(x;a)+ p \int_0^{-a} w^{(q)}(x;a+y)W^{(q+p)}(y)\mathrm{d}y.
\end{align}
For $q\geq0$, $x, c\geq b\geq0$, we have
\begin{align}\label{10}
	&\varpi^{(q)}(x,b,c):=\frac{\mathbb{W}^{(q)}(x-b) w^{(q)}(c; 0)}{\mathbb{W}^{(q)}(c-b) W^{(q)}(b)}.
\end{align}

Note that the term 'sufficiently smooth' is used here in a slightly weaker sense, allowing for finitely many isolated discontinuities in the first or second derivative.
\begin{proposition}\label{prop 4.5}(Smoothness)
For any $q, p\geq 0$ and $a\in\mathbb{R}$, when $X$ is of BV, if we assume $W^{(q)}(\cdot)\in \mathbb{C}^1((0, \infty))$, then the scale function $\vartheta^{(q+p,q)}(\cdot, a)\in \mathbb{C}^1((a, \infty) \setminus \{0, b\})$, with sufficient smoothness for analytical purposes. When $X$ is of UBV, $W^{(q)}(\cdot)\in \mathbb{C}^1((0, \infty))$ intrinsically, and if $W^{(q)}{'}(\cdot)$ is assumed to be absolutely continuous on $(0,\infty)$ with the derivative $W^{(q)}{''}(\cdot)$ which is bounded on sets of the form $[1/n, n]$, $n\geq 1$, then $\vartheta^{(q+p, q)}(\cdot, a)$ is sufficiently smooth.
\end{proposition}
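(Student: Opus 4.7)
The plan is to reduce the smoothness of $\vartheta^{(q+p,q)}(\cdot, a)$ to that of $w^{(q)}(\cdot; u)$, which is controlled by Proposition \ref{prop 2.1}, by differentiating the integral term under the integral sign. First I would substitute $u=a+y$ to rewrite
\begin{equation*}
\vartheta^{(q+p,q)}(x, a) = w^{(q)}(x;a) + p \int_{a}^{0} w^{(q)}(x;u)\, W^{(q+p)}(u-a)\,\mathrm{d}u,
\end{equation*}
so that the $x$-dependence sits entirely inside the factors $w^{(q)}(x;u)$. Away from the candidate breakpoints $\{0,b\}$, local boundedness of $W^{(q+p)}(\cdot - a)$ on $[a,0]$ together with the derivative bounds for $w^{(q)'}(\cdot;u)$ supplied by Proposition \ref{prop 2.1} will justify a Leibniz-type interchange, giving
\begin{equation*}
\frac{\partial}{\partial x}\vartheta^{(q+p,q)}(x,a) = w^{(q)'}(x;a) + p \int_{a}^{0} w^{(q)'}(x; u)\, W^{(q+p)}(u-a)\,\mathrm{d}u.
\end{equation*}

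Next I would analyse the two candidate breakpoints. At $x=b$, the jump of $w^{(q)'}(\cdot;u)$ is, by Proposition \ref{prop 2.1}, equal to $\delta\mathbb{W}^{(q)}(0)W^{(q)'}(b-u)$; this vanishes in the UBV case, where $\mathbb{W}^{(q)}(0)=0$, and is nonzero in the BV case, where $\mathbb{W}^{(q)}(0)=1/(\mu-\delta)$, producing a break at $b$ only when $X$ is of BV. At $x=0$ the subtlety is of a different nature: for $x<b$ the map $u\mapsto w^{(q)}(x;u)=W^{(q)}(x-u)$ inherits the atom of $W^{(q)}$ at the origin as a jump at $u=x$, and as $x$ crosses $0$ this moving discontinuity sweeps across the integration endpoint $u=0$. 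I would split the integral at $u=x$ and apply Leibniz' rule, which produces a one-sided boundary contribution $p\,W^{(q)}(0+)\,W^{(q+p)}(-a)$ at $x=0$; this vanishes precisely in the UBV case, where $W^{(q)}(0+)=0$. Combining these gives $\vartheta^{(q+p,q)}(\cdot, a)\in \mathbb{C}^{1}((a,\infty)\setminus\{0,b\})$ in the BV case and $\mathbb{C}^{1}((a,\infty))$ in the UBV case.

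For the UBV case, the additional assumption that $W^{(q)''}$ is locally bounded will allow a second differentiation under the integral sign; repeating the same Leibniz bookkeeping, all possible discontinuities of the second derivative localise again at $\{0,b\}$, with explicit jump sizes controlled by $W^{(q)'}(0+)=2/\sigma^2$ (when $\sigma>0$) and $\mathbb{W}^{(q)'}(0+)$, which delivers ``sufficient smoothness'' in the sense of the remark preceding the proposition. The hard part will be the analysis at $x=0$: one must rigorously handle the instant when the moving discontinuity of $u\mapsto w^{(q)}(x;u)$ coincides with the fixed endpoint $u=0$, by splitting the integral into the sub-region swept by the discontinuity and its complement and carefully tracking the resulting boundary term. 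Once this is done, the remaining statements follow from Proposition \ref{prop 2.1} together with the known boundary values of $W^{(q)}$, $W^{(q)'}$, $\mathbb{W}^{(q)}$ and $\mathbb{W}^{(q)'}$ at the origin.
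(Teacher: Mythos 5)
The paper states Proposition~\ref{prop 4.5} without proof, so there is no canonical argument to compare against; on its own merits your proposal is essentially correct and the Leibniz-based reduction to Proposition~\ref{prop 2.1} is the right strategy. Your localisation of the breakpoints is accurate: the jump of $w^{(q)}{'}(\cdot; u)$ at $x=b$ is $\delta\mathbb{W}^{(q)}(0)W^{(q)}{'}(b-u)$, which vanishes iff $X$ is of UBV, and the boundary contribution produced by the moving discontinuity at $u=x$ is $pW^{(q)}(0+)W^{(q+p)}(x-a)$, whose limit as $x\uparrow 0$ gives the jump of $\vartheta^{(q+p,q)}{'}(\cdot,a)$ at the origin; this also vanishes iff $W^{(q)}(0+)=0$, i.e.\ in the UBV case.

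One remark that simplifies the ``hard part'' you flag at $x=0$. For $a<x<0$ one has $w^{(q)}(x;u)=W^{(q)}(x-u)$ with $W^{(q)}(x-u)=0$ for $u>x$, so after the change of variable $v=u-a$ the integral in $\vartheta^{(q+p,q)}$ becomes $p\int_0^{x-a}W^{(q)}(x-a-v)W^{(q+p)}(v)\,\mathrm{d}v$, and identity \eqref{2} collapses the whole expression to
\begin{equation*}
\vartheta^{(q+p,q)}(x,a)=W^{(q+p)}(x-a),\qquad a<x<0.
\end{equation*}
This is consistent with the explicit formulas \eqref{30} and \eqref{31} in Section~\ref{sec 5}. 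With this identity there is no moving discontinuity to track on $(a,0)$: the smoothness of $\vartheta^{(q+p,q)}(\cdot,a)$ on $(a,0)$ is that of $W^{(q+p)}(\cdot -a)$, and the jump of the first derivative at $0$ is read off by comparing $W^{(q+p)}{'}(-a)$ (the left limit) with $W^{(q)}{'}(-a)+p\int_a^0 W^{(q)}{'}(-u)W^{(q+p)}(u-a)\,\mathrm{d}u$ (the right limit), which by differentiating \eqref{2} equals $W^{(q+p)}{'}(-a)-pW^{(q)}(0+)W^{(q+p)}(-a)$; this reproduces your jump size without any Leibniz bookkeeping across the moving discontinuity. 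The rest of your argument, including the second-derivative discussion in the UBV case under the stated absolute-continuity and local-boundedness hypotheses on $W^{(q)}{'}$, is fine.
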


\section{Conditions for optimal impulse control}\label{sec 3}
\label{sec:impulse}
In this section, we will analyze the optimal conditions for the dividend strategy of
the refracted L\'{e}vy risk process, considering transaction costs, under Parisian ruin with an ultimate bankruptcy barrier.
First, semi-analytical expressions are provided for the exit problem with Parisian exponential delays and a lower bankruptcy barrier. Then, using standard Markovian arguments, we prove the Hamilton-Jacobi-Bellman (HJB) inequalities corresponding to the optimal dividend problem \eqref{optpro}, which represent the sufficient conditions for the dividend strategy to be optimal.

\subsection{Exit problem}

We derive the Laplace transform of the exit time up to Parisian ruin with the given exponential delay $\xi\sim \exp(m)$ and lower ultimate bankruptcy barrier $-l<0$, which impacts the reasoning of the value function, with the proof given in Appendix \ref{prop3112}.
\begin{proposition}\label{prop 3.1}
	For $q, m\geq 0$, $l>0$, $c, b\geq 0$ and $x\in[-l, c)$, we have
	\begin{eqnarray}\label{11}
		&&E_x[e^{-q k_{c}^+}\mathbf{1}\{k_{c}^+<T\}]=\frac{\vartheta^{(q+m,q)}(x,-l)}{\vartheta^{(q+m,q)}(c,-l)}.
	\end{eqnarray}
\end{proposition}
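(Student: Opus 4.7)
The plan is to reduce Parisian ruin with an exponential delay to a killing problem at rate $m$ while $R$ is negative, and then to piece the answer together via the strong Markov property at the down-crossing time $k_0^-$. By memorylessness of the delay $\xi\sim\exp(m)$ (independent of $R$), the target equals
$$E_x\bigl[e^{-qk_c^+-mA(k_c^+)}\mathbf{1}\{k_c^+<k_{-l}^-\}\bigr],\qquad A(t):=\int_0^t\mathbf{1}\{R_s<0\}\,\mathrm{d}s,$$
which is precisely the reduction already used in Proposition~\ref{propequexp}.

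The sub-case $x\in[-l,0)$ comes for free: on this interval $R$ coincides with $X$ (since $b\ge 0$), so $R$ moves as $X$ until the first up-crossing of $0$---where spectral negativity forces creeping, so it lands exactly at $0$---or the first down-crossing of $-l$, with the killing clock running throughout at effective rate $q+m$. Identity \eqref{3} then gives $V(x):=E_x[e^{-qk_c^+}\mathbf{1}\{k_c^+<T\}]=W^{(q+m)}(x+l)\,V(0)/W^{(q+m)}(l)$, and the convolution identity \eqref{2} combined with $w^{(q)}(x;a)=W^{(q)}(x-a)$ for $x<b$ shows that $\vartheta^{(q+m,q)}(x,-l)=W^{(q+m)}(x+l)$ on $[-l,0]$. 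So the claim on $[-l,0)$ reduces to proving the formula at $x=0$.

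For $x\in[0,c)$ I would apply the strong Markov property at $k_0^-$ to obtain
$$V(x)=\frac{w^{(q)}(x;0)}{w^{(q)}(c;0)}+\frac{V(0)}{W^{(q+m)}(l)}\,\mathcal{J}(x),\qquad \mathcal{J}(x):=E_x\bigl[e^{-qk_0^-}\mathbf{1}\{k_0^-<k_c^+\}\,W^{(q+m)}(R_{k_0^-}+l)\bigr],$$
where the first term uses \eqref{5} and the substitution of the $[-l,0)$-formula into $V(R_{k_0^-})$ relies on the convention $W^{(q+m)}\equiv 0$ on $(-\infty,0)$ to absorb the bankruptcy event $\{R_{k_0^-}<-l\}$. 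Computing $\mathcal{J}(x)$ is the crux: it is a refracted analogue of \eqref{6}. I would split at the level $b$---for $x\in[0,b]$ the down-crossing of $0$ happens while $R=X$, so \eqref{6} applies after a further splitting at $k_b^+$ using \eqref{3}; for $x\in(b,c]$ I would condition at $k_b^-$ using \eqref{8} with $W^{(q+m)}$ in place of $W^{(q)}$ and then plug in the $[0,b]$- or $[-l,0)$-formula according to whether $R_{k_b^-}$ lands above or below $0$. Collapsing the resulting nested convolutions via \eqref{2} should produce
$$\mathcal{J}(x)=\vartheta^{(q+m,q)}(x,-l)-\frac{w^{(q)}(x;0)}{w^{(q)}(c;0)}\,\vartheta^{(q+m,q)}(c,-l),$$
and letting $x\uparrow c$ (where $V\to 1$) then pins down $V(0)=W^{(q+m)}(l)/\vartheta^{(q+m,q)}(c,-l)$, delivering the stated ratio.

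The main obstacle is precisely this refracted down-shoot identity: because the law of $R_{k_0^-}$ is sensitive to the excursions of $R$ above $b$, the clean one-shot form of \eqref{6} is not directly available, and one has to keep careful track of the convolutions generated by the definition \eqref{1wqde} of $w^{(q)}(\cdot\,;\cdot)$ until they recombine with the $W^{(q+m)}$-convolution inside the definition \eqref{9} of $\vartheta^{(q+m,q)}$. Everything else is routine strong-Markov bookkeeping plus scale-function algebra.
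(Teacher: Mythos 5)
Your reduction to a killing-at-rate-$m$ problem, the observation that $\vartheta^{(q+m,q)}(x,-l)=W^{(q+m)}(x+l)$ on $[-l,0]$ (via \eqref{2}), and the one-shot strong-Markov decomposition at $k_0^-$ are all sound, and this is structurally a cleaner reorganization than the paper's nested decomposition at $\upsilon_b^-$ and then $\tau_0^-$. Modulo the heavy convolution algebra you acknowledge as the crux (which, once carried out, does reproduce the paper's \eqref{18}), most of the plan is correct. However, there is a concrete error in the closing step: letting $x\uparrow c$ does \emph{not} pin down $V(0)$. With your claimed formula for $\mathcal{J}$, one has $\mathcal{J}(c)=\vartheta^{(q+m,q)}(c,-l)-\vartheta^{(q+m,q)}(c,-l)=0$, so the $V(0)$-dependent term vanishes and the $x\uparrow c$ limit yields the tautology $1=1$. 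The constant $V(0)$ must instead be extracted by evaluating the decomposition at $x=0$: this gives
$$V(0)\cdot\frac{W^{(q)}(0)\,\vartheta^{(q+m,q)}(c,-l)}{W^{(q+m)}(l)\,w^{(q)}(c;0)}=\frac{W^{(q)}(0)}{w^{(q)}(c;0)},$$
which determines $V(0)$ only when $W^{(q)}(0)>0$, i.e.\ in the BV case. This is exactly why the paper sets up a genuine linear system (at $x=0$ and $x=b$) rather than taking a boundary limit, and then handles the UBV case separately by the Loeffen--Renaud--Zhou approximation. Your write-up is silent on the UBV degeneracy, which is a second gap: the equation above becomes $0=0$ when $W^{(q)}(0)=0$, and some limiting argument from the BV case is genuinely required to finish.
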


On the basis of \eqref{11}, we now derive several limiting cases. A detailed proof of Corollary \ref{cor00} is given in Appendix \ref{cor2100}, the other corollaries follow by similar arguments.

\begin{corollary}\label{cor00}
Letting $l\rightarrow \infty$, ruin is governed solely by the Parisian exponential delay, i.e. $T=\tau_p$. For $x\in (-\infty,c)$, we have
\begin{eqnarray}\label{ex26}
&&E_x[e^{-q k_c^+}I\{k_c^+<\tau_p\}]=\frac{F^{(q+m,q)}(x)}{F^{(q+m,q)}(c)},
\end{eqnarray}
where $H^{(q+m,q)}(x) := e^{\phi(q+m)x}\big(1-m \int_0^x e^{-\phi(q+m)(x-y)} W^{(q)}(x-y)\mathrm{d}y\big)$ and
\begin{align*}
&F^{(q+m,q)}(x):=H^{(q+m,q)}(x) +\delta \mathbf{1}{\{x\geq b\}}\\
&\cdot \int_b^x \mathbb{W}^{(q)}(x-y) \Big(\big(\phi(q+m)-m W^{(q)}(0)\big) e^{\phi(q+m)y}
-m\int_0^y e^{\phi(q+m)z} W^{(q)}{'}(y-z)\mathrm{d}z\Big)\mathrm{d} y. \nonumber
\end{align*}
\end{corollary}

\begin{corollary}\label{cor010}
For $m=0$, the model \eqref{model1} is subject to no restriction on the time it may
spend below zero, and ruin is declared at the first down-crossing of the lower
bankruptcy barrier $-l$, i.e. $T = k_{-l}^-$. For $x\in[-l, c)$, we have
\[
E_x\bigl[e^{-q k_c^+}\mathbf{1}{\{k_c^+<k_{-l}^-\}}\bigr]
= \frac{w^{(q)}(x; -l)}{w^{(q)}(c; -l)},
\]
which is consistent with \eqref{5}.
In particular, as $l\to 0$ (equivalently, $m\to\infty$), ruin is defined in the classical sense as the first passage time below zero, i.e. $T=k_0^-$.
\end{corollary}

\begin{corollary}\label{cor22}
For $\delta=0$, the model \eqref{model1} reduces to SNLP $X$. Then, for $x\in [-l,c)$,
\begin{eqnarray}\label{exit00}
&&E_x[e^{-q \tau_c^+}\mathbf{1}\{\tau_c^+<T\}]=\frac{g^{(q+m,q)}(x, l)}{g^{(q+m,q)}(c, l)},
\end{eqnarray}
which is consistent with Proposition~3.1 of Frostig and Keren-Pinhasik\cite{Frostig2020}.
Further, letting $l\rightarrow \infty$, we have $T= \tau_p$, and for $x\in(-\infty, c)$,
\begin{eqnarray*}
&&E_x[e^{-q \tau_c^+}\mathbf{1}\{\tau_c^+<{\tau}_{p}\}]
=\frac{H^{(q+m,q)}(x)}
{H^{(q+m,q)}(c)},
\end{eqnarray*}
which recovers the result in Baurdoux, Pardo, P\'{e}rez and Renaud\cite{Baurdoux20162} and Lemma 2.1 of Landriault, Renaud, and Zhou\cite{Landriault2014}. Moreover, for $m=0$, we have $T=\tau_{-l}^-$, and \eqref{exit00} coincides with \eqref{3}.
\end{corollary}

\subsection{Necessary and sufficient conditions for optimal impulse control}
\begin{lemma}\label{lem 3.2}
	For $x\geq y\geq 0$, $x-y>\beta \mathbf{1}\{x>y\}$ and $l>0$, the value function $V_*$ satisfies
	\begin{eqnarray*}
		x-y-\beta\mathbf{1}\{x>y\} \leq V_*(x)-V_*(y)\leq (1-\frac{\vartheta^{(q+m, q)}(y,-l)}{\vartheta^{(q+m, q)}(x,-l)})V_*(x),
	\end{eqnarray*}
	where $\vartheta^{(q+m, q)}(\cdot, -l)$ is defined in \eqref{9}. Further, $V_*(x)$
is increasing and continuous.
\end{lemma}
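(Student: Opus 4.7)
The plan is to obtain both bounds by exhibiting explicit suboptimal admissible strategies, and then to read off monotonicity and continuity as consequences. For the lower bound, starting from $x$ I would pay an immediate lump-sum dividend of size $x-y$---admissible precisely because $x-y>\beta\mathbf{1}\{x>y\}$---bringing the surplus to $y$, and thereafter follow an $\epsilon$-optimal strategy $\pi_y^{\epsilon}$ for initial state $y$. By the strong Markov property at $0+$, this yields an expected discounted net dividend of at least $(x-y)-\beta\mathbf{1}\{x>y\}+V_*(y)-\epsilon$; taking the supremum over admissible strategies and sending $\epsilon\downarrow 0$ gives the desired lower bound.

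For the upper bound, starting from $y$ I would pay no dividends until the first up-crossing time $k_x^+$ of level $x$, and then switch to an $\epsilon$-optimal strategy $\pi_x^{\epsilon}$ for initial state $x$. Since $R$ is skip-free upward the process is exactly at $x$ at time $k_x^+$, and since $R_{k_x^+}=x\geq 0$ any ongoing negative excursion has ended so that the Parisian exponential clock resets; applying the strong Markov property at $k_x^+$ together with Proposition \ref{prop 3.1} then yields
\begin{align*}
V_*(y) \ge E_y\bigl[e^{-q k_x^+}\mathbf{1}\{k_x^+<T\}\bigr]\bigl(V_*(x)-\epsilon\bigr) = \frac{\vartheta^{(q+m,q)}(y,-l)}{\vartheta^{(q+m,q)}(x,-l)}\bigl(V_*(x)-\epsilon\bigr),
\end{align*}
and after $\epsilon\downarrow 0$ and rearrangement one obtains the claimed upper bound.

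Monotonicity and continuity follow quickly from these bounds. The lower bound gives $V_*(x)>V_*(y)$ whenever $x-y>\beta$; for the residual case $0\leq x-y\leq \beta$, a pathwise coupling of the two controlled processes driven by the same L\'evy noise, combined with the absence of positive jumps (which prevents crossings from below) and the ordering-preserving nature of the refraction term, gives $U^{\pi,x}_t\geq U^{\pi,y}_t$ for every admissible $\pi$, hence $V_*(x)\geq V_*(y)$. Continuity then comes from the upper bound together with the continuity of $\vartheta^{(q+m,q)}(\cdot,-l)$ on $[-l,\infty)$, inherited from Proposition \ref{prop 2.1} applied to $w^{(q)}(\cdot;-l)$: as $y\uparrow x$ the right-hand side of the upper bound tends to zero, and swapping the roles of $x$ and $y$ delivers right-continuity, once one notes local boundedness of $V_*$ (itself an easy consequence of the upper bound at a fixed reference point).

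The main technical subtlety sits in the upper bound: one must verify carefully that the strategy ``wait until $k_x^+$, then restart optimally'' is admissible and enjoys the strong Markov property despite the possibility of negative excursions during $[0,k_x^+)$ carrying their own Parisian clocks. Proposition \ref{prop 3.1} is tailor-made to capture exactly this joint discounted survival probability, so once it is in hand the bound essentially writes itself. The standing hypothesis $x-y>\beta\mathbf{1}\{x>y\}$ is likewise imposed precisely so that the initial lump-sum dividend in the lower bound qualifies as an admissible impulse.
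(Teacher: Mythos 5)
Your two displayed bounds are obtained by exactly the same constructions the paper uses: for the lower bound, an immediate lump-sum payment of $x-y$ followed by an $\epsilon$-optimal continuation from $y$; for the upper bound, waiting until $k_x^+$ and then switching to an $\epsilon$-optimal strategy from $x$, with Proposition~\ref{prop 3.1} supplying the Laplace transform $E_y[e^{-q k_x^+}\mathbf{1}\{k_x^+<T\}]=\vartheta^{(q+m,q)}(y,-l)/\vartheta^{(q+m,q)}(x,-l)$. So the core of your argument coincides with the paper's.

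Where you genuinely diverge is in deriving monotonicity. The paper invokes differentiability of $V_*$ (deferred to Proposition~\ref{corol 4.3}) and the mean value theorem, which is logically a bit circular at this stage of the development. Your pathwise-coupling argument is more self-contained: for any strategy admissible from $y$, paying the same dividend amounts at the same (noise-measurable) times is admissible from $x\ge y$ because the refracted dynamics and the absence of positive jumps preserve the ordering $U^{\pi,x}_t\geq U^{\pi,y}_t$, and the ruin time from $x$ is no earlier; taking suprema gives $V_*(x)\geq V_*(y)$. The one thing to make explicit is that you should couple via a strategy specified as a functional of the driving noise rather than a state-feedback rule, so that ``applying the same $\pi$'' to both initial conditions is actually well-defined. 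With that caveat spelled out, your route to monotonicity is cleaner than the paper's, and your continuity argument from the upper bound plus continuity of $\vartheta^{(q+m,q)}(\cdot,-l)$ is the same as the paper's.
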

\begin{proof}
To prove the first inequality, for $\epsilon>0$ and initial capital $y\geq 0$, we consider a $\epsilon$-optimal strategy $\pi_{y*}$ such that $V_*(y)\leq V_{\pi_{y*}}(y)+\epsilon$. For initial capital $x$, using the admissible strategy $\tilde{\pi}_x$ in which there is an initial dividend payment with amount $x-y$ and followed by the application of strategy $\pi_{y*}$. Then we have $V_*(x)\geq V_{\tilde{\pi}_x}(x)=x-y-\beta\mathbf{1}\{x>y\}+V_{\pi_{y*}}(y)\geq x-y-\beta\mathbf{1}\{x>y\}+V_{*}(y)-\epsilon$. Let $\epsilon\downarrow 0$, one gets $V_*(x)-V_*(y)\geq x-y-\beta\mathbf{1}\{x>y\}$.
Assuming $V_*$ is differentiable at $x$, since $x-y>\beta\mathbf{1}\{x>y\}$,
the mean value theorem leads to $V_*'(x)>0$ for any $x\geq 0$, i.e. $V_*(x)$ is increasing. (In fact, Proposition~\ref{corol 4.3} establishes that $V_{*}$ is sufficiently smooth on $[-l, \infty)$ in the weaker sense of smoothness defined in Proposition~\ref{prop 4.5}, and the regularity of $V_*(x)\in \mathbb{C}^1((-l, \infty)\setminus \{0, b\})$ holds in both the BV and UBV cases.)

Next, we prove the second inequality. For initial capital $x$, the $\epsilon$-optimal strategy $\pi_{x*}$ also satisfies $V_*(x)\leq V_{\pi_{x*}}(x)+\epsilon$. For initial capital $y$, we adopt an admissible strategy $\tilde{\pi}_y$ in which there is no dividend payment before the process $R$ first up-crosses the level $x$, and followed by strategy $\pi_{x*}$. By \eqref{11}, we have
	\begin{align*}
		V_*(y) & \geq V_{\tilde{\pi}_y}(y)=E_y[e^{-q k_x^+}\mathbf{1}\{k_x^+<T\}]V_{\pi_{x*}}(x)
		=\frac{\vartheta^{(q+m,q)}(y,-l)}{\vartheta^{(q+m,q)}(x,-l)}V_{\pi_{x*}}(x)\\
& \geq \frac{\vartheta^{(q+m,q)}(y,-l)}{\vartheta^{(q+m,q)}(x,-l)}(V_{*}(x)-\epsilon).
	\end{align*}
	By letting $\epsilon\downarrow 0$, we obtain the second inequality as well as the continuity of the value function.
\end{proof}

For $x\in \mathbb{R}$, define the operator $\mathcal{A}$ by
\begin{align}\label{operator}
	\mathcal{A} f(x)
& = (\gamma-\delta \mathbf{1}\{x>b\})f{'}(x)+\frac{\sigma^2}{2} f{''}(x) \nonumber \\
& \ \ \ + \int_0^{\infty}(f(x-z)-f(x)+ f{'}(x)z \mathbf{1}\{0<z<1\})\nu(\mathrm{d}z),
\end{align}
where $f: \mathbb{R} \rightarrow \mathbb{R}$ is a sufficiently smooth function for which $\mathcal{A} f$ is well-defined.
Applying the Bouleau-Yor formula (see Bouleau and Yor\cite{Bouleau1981}) for $X$ is of BV and Meyer-It\^{o} formula (extend second derivative, see Theorem IV.71, Protter\cite{Protter2005}) for UBV case, we can deduce $f(U_{t}^{\pi})$ satisfies the following equation, and its proof is deferred to Appendix \ref{appen}. $L^{\pi}(t)$ is as defined in Proposition~\ref{propequexp}.
\begin{lemma}\label{lemdiff}
For $t\geq 0$, we have
\begin{align}\label{dududu}
e^{-q t -L^{\pi}(t)}f(U_{t}^{\pi})
&= f(x)+ \int_0^{t} e^{-q s -L^{\pi}(s)} (\mathcal{A} -q - m \mathbf{1}\{U_{s-}^{\pi}<0\}) f(U_{s-}^{\pi})\mathrm{d}s\\
&\ \ \
+\sum_{0\leq s\leq t}e^{- q s -L^{\pi}(s)}\big(f(U_{s-}^{\pi} + \Delta X_s - \Delta D_s^{\pi})-f(U_{s-}^{\pi} + \Delta X_s) \big) + \mathcal{M}_{t},\nonumber
\end{align}
where $\Delta X_t=X_t-X_{t-}$ represents the possible negative jump of $X$ at time $t$,
$\Delta D_t^{\pi}= D_t^{\pi}-D_{t-}^{\pi}$ denotes the possible pay-out dividend at time $t$,
and $\{\mathcal{M}_t\}_{t\geq 0}$ is an $\{\mathcal{F}_t, t\ge0\}$-adapted local-martingale  defined in \eqref{martinm}.
\end{lemma}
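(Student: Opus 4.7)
The plan is to apply an It\^o-type change-of-variable formula to $f(U_t^{\pi})$ first, and then combine the resulting expression with the continuous finite-variation multiplier $e^{-qt-L^{\pi}(t)}$ via integration by parts. Which change-of-variable formula is appropriate depends on the path variation of $X$: Bouleau--Yor in the BV case (where $f'$ is only of locally bounded variation) and Meyer--It\^o in the UBV case (with extended second derivative). The limited smoothness of $f$ at the kink points $\{0,b\}$, implicit in Proposition~\ref{prop 4.5}, is exactly what forces us beyond the classical $C^2$ It\^o formula.

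First, I would write $U^{\pi}$ as a semimartingale: the continuous refracted drift $\gamma t-\delta\int_0^t\mathbf{1}\{U_s^{\pi}\ge b\}\,\mathrm{d}s$, the continuous martingale $\sigma B_t$, a pure-jump part from the negative L\'evy jumps of $X$ (encoded by its Poisson random measure $N(\mathrm{d}s,\mathrm{d}z)$ on $(0,\infty)$), and the pure-jump decrement $-D^{\pi}$. Applying the change-of-variable formula to $f(U^{\pi})$ then produces the drift contributions $(\gamma-\delta\mathbf{1}\{U_s^{\pi}\ge b\})f'(U_{s-}^{\pi})$ and $\tfrac{\sigma^2}{2}f''(U_{s-}^{\pi})$, a Brownian stochastic integral against $\mathrm{d}B_s$, and a total-jump sum. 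Rewriting the L\'evy-jump part via the compensated measure $\widetilde N(\mathrm{d}s,\mathrm{d}z)$ generates a compensator $\int_0^{\infty}(f(U_{s-}^{\pi}-z)-f(U_{s-}^{\pi})+f'(U_{s-}^{\pi})z\mathbf{1}\{0<z<1\})\nu(\mathrm{d}z)$ that fuses with the refracted drift and diffusion to reproduce exactly $\mathcal{A}f(U_{s-}^{\pi})$, while the compensated piece enters a local martingale. The leftover at a dividend time $s$ is precisely the residual $f(U_{s-}^{\pi}+\Delta X_s-\Delta D_s^{\pi})-f(U_{s-}^{\pi}+\Delta X_s)$, which vanishes whenever $\Delta D_s^{\pi}=0$.

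For the second step, since $t\mapsto e^{-qt-L^{\pi}(t)}$ is continuous and of finite variation, with $\mathrm{d}(e^{-qt-L^{\pi}(t)})=-(q+m\mathbf{1}\{U_t^{\pi}<0\})e^{-qt-L^{\pi}(t)}\,\mathrm{d}t$, the It\^o product rule produces no quadratic-covariation correction. Multiplying through by this factor and absorbing it into every term of the decomposition immediately converts the integrand into $e^{-qs-L^{\pi}(s)}(\mathcal{A}-q-m\mathbf{1}\{U_{s-}^{\pi}<0\})f(U_{s-}^{\pi})$, produces the stated weighted dividend-jump sum, and yields the local martingale $\mathcal{M}_t$ equal to the sum of the $e^{-qs-L^{\pi}(s)}$-weighted Brownian integral and compensated-Poisson integral.

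The main obstacle is the jump bookkeeping: one must carefully organise the potentially coincident L\'evy and dividend jumps so that the asymmetric residual $f(U_{s-}^{\pi}+\Delta X_s-\Delta D_s^{\pi})-f(U_{s-}^{\pi}+\Delta X_s)$ emerges cleanly while the L\'evy compensator precisely reconstructs the integro-differential part of $\mathcal{A}$. The local-martingale property of $\mathcal{M}_t$ is then a routine consequence of standard integrability bounds on stochastic integrals against $\mathrm{d}B_s$ and $\widetilde N(\mathrm{d}s,\mathrm{d}z)$, weighted by the bounded factor $e^{-qs-L^{\pi}(s)}$.
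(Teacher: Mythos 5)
Your proposal is correct and follows essentially the same route as the paper: the same semimartingale decomposition of $U^{\pi}$ via the Itô–Lévy representation, Bouleau–Yor in the BV case and Meyer–Itô in the UBV case, compensating the small-jump Poisson integral to reconstruct the integro-differential generator $\mathcal{A}$, and isolating the dividend-induced residual $f(U_{s-}^{\pi}+\Delta X_s-\Delta D_s^{\pi})-f(U_{s-}^{\pi}+\Delta X_s)$. The only cosmetic difference is that you stage the computation in two steps (change of variables on $f(U^{\pi})$, then product rule with the continuous finite-variation factor $e^{-qt-L^{\pi}(t)}$), whereas the paper writes the combined change-of-variable formula for $e^{-qt-L^{\pi}(t)}f(U_t^{\pi})$ directly — these are the same calculation.
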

Notice that $\Delta U^{\pi}_t=\Delta X_t -\Delta D^{\pi}_t$ and the identity \eqref{dududu} also hold if $\Delta X_t\cdot\Delta D^{\pi}_t=0$.

\begin{lemma}\label{lem 3.4}(Verification Lemma)
\begin{enumerate}[label=(\roman*)]
\item
Suppose $V$ is sufficiently smooth on $[-l, \infty)$, that is, the first derivative for X of BV or the second derivative for X of UBV has at most finitely many isolated discontinuities, satisfies the following HJB inequalities
	\begin{align}
		&(\mathcal{A}-q - m \mathbf{1}\{x<0\}) V(x)\leq 0 && \mbox{for}  \ x\in [-l, \infty); \label{21}\\
&V(x)-V(y)\geq x-y-\beta \mathbf{1}\{x>y\}  && \mbox{for} \  x\geq y\geq 0 \ \mbox{and} \ x-y>\beta \mathbf{1}\{x>y\}.\label{22}
	\end{align}
Then, $V(x)\geq V_*(x)$ for a.e. $x\in [-l, \infty)$.
\item
If $\hat{\pi}$ is an admissible dividend strategy with the associated expected discounted dividend function, $V_{\hat{\pi}}$, satisfying the smoothness condition and the HJB inequalities \eqref{21}-\eqref{22} in (i), then $V_{\hat{\pi}}(x)=V_*(x)$.
\end{enumerate}
\end{lemma}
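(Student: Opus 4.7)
The plan for part (i) is the standard stochastic-control verification argument: fix an arbitrary admissible strategy $\pi \in \Pi$, apply the It\^{o}-type decomposition of Lemma \ref{lemdiff} to the candidate function $V$, use the two HJB inequalities to bound the drift integral and the dividend jumps separately, and then take expectations with a suitable localization to arrive at $V(x) \geq V_{\pi}(x)$ for every admissible $\pi$. Part (ii) is then immediate: by (i) we have $V_{\hat{\pi}} \geq V_{*}$, while by the very definition of $V_{*}$ as a supremum we have $V_{\hat{\pi}} \leq V_{*}$, so equality holds on $[-l,\infty)$.

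Carrying this out, Lemma \ref{lemdiff} applied to $V$ yields a decomposition $e^{-qt - L^{\pi}(t)} V(U_{t}^{\pi}) = V(x) + I(t) + J(t) + \mathcal{M}_{t}$, where $I(t)$ is the integral of $e^{-qs-L^{\pi}(s)}(\mathcal{A}-q-m\mathbf{1}\{U_{s-}^{\pi}<0\})V(U_{s-}^{\pi})$ and $J(t)$ collects the jump differences $V(U_{s-}^{\pi}+\Delta X_{s}-\Delta D_{s}^{\pi})-V(U_{s-}^{\pi}+\Delta X_{s})$. Inequality \eqref{21} forces $I(t)\leq 0$. For $J(t)$, since $X$ is spectrally negative and $D^{\pi}$ is pure-jump with $\Delta X_{s}\cdot\Delta D_{s}^{\pi}=0$ a.s. (as noted right after Lemma \ref{lemdiff}), only dividend jumps contribute; at each such time admissibility gives $\Delta D_{s}^{\pi}>\beta$ and $U_{s-}^{\pi}-\Delta D_{s}^{\pi}\geq 0$, so applying \eqref{22} with $x=U_{s-}^{\pi}$ and $y=U_{s-}^{\pi}-\Delta D_{s}^{\pi}$ yields
\begin{equation*}
V(U_{s-}^{\pi}-\Delta D_{s}^{\pi})-V(U_{s-}^{\pi})\leq -(\Delta D_{s}^{\pi}-\beta)\mathbf{1}\{\Delta D_{s}^{\pi}>0\},
\end{equation*}
so that $J(t)\leq -\int_{0}^{t} e^{-qs-L^{\pi}(s)}\,\mathrm{d} D_{\beta}^{\pi}(s)$ in the notation of Proposition \ref{propequexp}.

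To remove the local martingale, I would introduce a localizing sequence $\{\tau_{n}\}$ reducing $\mathcal{M}$ and evaluate at $\tau_{n}\wedge t \wedge k_{-l}^{\pi}$. Taking expectations, rearranging, and letting first $n\to\infty$ and then $t\to\infty$, the leftover terminal term $E_{x}[e^{-q(t\wedge k_{-l}^{\pi})-L^{\pi}(\cdot)} V(U_{\cdot}^{\pi})]$ vanishes (the discount factor handles $\{t<k_{-l}^{\pi}\}$, while on $\{t\geq k_{-l}^{\pi}\}$ one uses the boundedness of $V$ on $[-l,0]$), so monotone convergence on the dividend functional together with the second representation in \eqref{vj1j200} gives $V(x)\geq V_{\pi}(x)$. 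The main obstacle will be the regularity bookkeeping: the candidate $V$ is only assumed sufficiently smooth in the weakened sense of Proposition \ref{prop 4.5}, so the Meyer-It\^{o} / Bouleau-Yor formula underlying Lemma \ref{lemdiff} has to be applied carefully at the finitely many kinks of $V'$ (and of $V''$ in the UBV case); this is handled by observing that the occupation measure of $U^{\pi}$ assigns zero mass to any Lebesgue-null set, so the would-be local-time contributions at those kinks drop out. A subsidiary technical point is securing the a priori linear growth bound $V(x)\leq C(1+x)$ --- obtained directly from iterating \eqref{22} --- which is needed to justify the dominated / monotone convergence step when $t\to\infty$.
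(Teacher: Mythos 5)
Your proposal follows essentially the same route as the paper: apply Lemma~\ref{lemdiff} to $V$ along the controlled process, bound the drift integral by \eqref{21} and the dividend-jump differences by \eqref{22} (the paper writes the jump bound in the form $V(U_{s-}^{\pi}+\Delta X_s-\Delta D_s^{\pi})-V(U_{s-}^{\pi}+\Delta X_s)\le -(\Delta D_s^{\pi}-\beta\mathbf{1}\{\Delta D_s^{\pi}>0\})$, which agrees with your version since $\Delta X_s\cdot\Delta D_s^{\pi}=0$), localize with $\tau_n=\inf\{t:U_t^{\pi}>n\text{ or }U_t^{\pi}<-l+1/n\}$, take expectations, and pass to the limit by monotone convergence, with part (ii) immediate from (i) and the supremum definition of $V_*$. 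The one small divergence is cosmetic: you work to show the terminal term $E_x[e^{-q(t\wedge\tau_n)-L^{\pi}(t\wedge\tau_n)}V(U_{t\wedge\tau_n}^{\pi})]$ vanishes, whereas the paper simply discards it by nonnegativity of $V$, which makes your a priori linear-growth bound unnecessary.
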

\begin{proof}
We only need to prove that for all $\pi\in\Pi$ and $x\in [-l, \infty)$, $V(x)\geq V_{\pi}(x)$.
Fix $\pi\in\Pi$ and define the sequence of stopping times $\{\tau_n\}_{n\in \mathbb{N}}$ as
\begin{align*}
&\tau_n:=\inf\{t\geq0: U_t^{\pi}>n \ \mbox{or} \ U_t^{\pi} < -l +\frac{1}{n} \}.
\end{align*}
From Schilling\cite{Schilling1998} and Schnurr\cite{Schnurr2012} it follows that $U^{\pi}$ is a semi-martingale.
Since $V$ is sufficiently smooth on $[-l, \infty)$ (for $X$ of BV or UBV, respectively), and $\{e^{-q(t\wedge \tau_n) - L^{\pi}(t\wedge \tau_n)}$ $V(U_{t\wedge \tau_n}^{\pi})\}_{t\geq 0}$ is the stopped process, by Lemma \ref{lemdiff}, we conclude that under $P_x$,
\begin{align*}
& e^{-q(t\wedge \tau_n)- L^{\pi}(t\wedge \tau_n)}V(U_{t\wedge \tau_n}^{\pi})
 = V(x)+\int_0^{t\wedge \tau_n}e^{-q s - L^{\pi}(s)}(\mathcal{A}-q - m \mathbf{1}\{U_{s-}^{\pi}<0\}) V(U_{s-}^{\pi})\mathrm{d}s \nonumber \\
& \ \ \ \ \ \ \ \ \ \ \ \ \ \ \ \ \ \ \  +\sum_{0\leq s\leq t\wedge \tau_n}e^{-q s - L^{\pi}(s)}\big(V(U_{s-}^{\pi} +\Delta X_s-\Delta D_s^{\pi})-V(U_{s-}^{\pi}+\Delta X_s)\big)+\mathcal{M}_{t\wedge \tau_n}.
\end{align*}
Since dividend payments do not reduce the surplus below zero, and $\Delta D_t^{\pi}> \beta \mathbf{1}\{\Delta D_t^{\pi}>0\}$ for any $t\geq 0$, by \eqref{22}, we have
\begin{align*}
V(U_{s-}^{\pi}+\Delta X_s -\Delta D_s^{\pi})-V(U_{s-}^{\pi}+\Delta X_s)
&=-\big(V(U_{s-}^{\pi}+\Delta X_s)-V(U_{s-}^{\pi} +\Delta X_s -\Delta D_s^{\pi})\big)\\
& \leq
-(\Delta D_s^{\pi}-\beta\mathbf{1}\{\Delta D_s^{\pi}>0\}) \leq 0.
\end{align*}
Following this, by combining \eqref{21}, we obtain
\begin{align*}
V(x)
&=e^{-q(t\wedge \tau_n)- L^{\pi}(t\wedge \tau_n)}V(U_{t\wedge \tau_n}^{\pi})-\int_0^{t\wedge \tau_n}e^{-q s - L^{\pi}(s)}(\mathcal{A}-q- m \mathbf{1}\{U_{s-}^{\pi}<0\}) V(U_{s-}^{\pi})\mathrm{d}s\\
&\ \ \  -\sum_{0\leq s\leq t\wedge \tau_n}e^{-q s - L^{\pi}(s)}\big(V(U_{s-}^{\pi} +\Delta X_s -\Delta D_s^{\pi})-V(U_{s-}^{\pi}+\Delta X_s)\big)- \mathcal{M}_{t\wedge \tau_n}\\
&\geq
e^{-q(t\wedge \tau_n)- L^{\pi}(t\wedge \tau_n)}V(U_{t\wedge \tau_n}^{\pi})
+\sum_{0\leq s\leq t\wedge \tau_n}e^{-q s - L^{\pi}(s)}\big(\Delta D_s^{\pi}-\beta\mathbf{1}\{\Delta D_s^{\pi}>0\}\big) - \mathcal{M}_{t\wedge \tau_n}.
\end{align*}
Note that ${\tau_n}\xrightarrow{n\uparrow\infty} k_{-l}^{\pi}$ for $P_x$-a.s. Taking expectation on both sides of the inequality above
and letting $t, n\uparrow\infty$, since $V\geq 0$ on $[-l, \infty)$, by the monotone convergence theorem, we have
	\begin{align*}
		 & V(x) \geq \lim_{t,n\uparrow\infty}E_x\Big[\int_0^{t\wedge \tau_n} e^{-q s - L^{\pi}(s)}\big(\mathrm{d}D^{\pi}_{s}-\sum_{0\leq r\leq s}\beta \mathbf{1}\{\Delta D_r^{\pi}>0\}\big) -\mathcal{M}_{t\wedge \tau_n} \nonumber
\\ & \ \ \ \ \ \ \ \ \ \ \ \ \ \ \ \ \ \ \ \ \ \ \ \ +e^{-q(t\wedge \tau_n)- L^{\pi}(t\wedge \tau_n)}V(U_{t\wedge \tau_n}^{\pi})\Big]\\
&\ \ \ \ \ =E_x\Big[\int_0^{k_{-l}^{\pi}} e^{-q s - L^{\pi}(s)} \big(\mathrm{d}D^{\pi}_{s}-\sum_{0\leq r\leq s}\beta \mathbf{1}\{\Delta D_r^{\pi}>0\}\big)
+\lim_{t,n\uparrow\infty}e^{-q(t\wedge \tau_n)- L^{\pi}(t\wedge \tau_n)}V(U_{t\wedge \tau_n}^{\pi})\Big]\nonumber\\
&\ \ \ \ \ \geq V_{\pi}(x).
	\end{align*}
By the arbitrariness of the strategy $\pi$, it follows that $V(x)\geq V_{\pi}(x)$ for all $\pi\in\Pi$ and a.e. $x\in [-l, \infty)$.
\end{proof}

\section{Optimal impulse control strategy}\label{sec 4}
We introduce a candidate for the optimal strategy in the dividend problem \eqref{optpro}, known as the impulse control strategy $\pi_{(c_1, c_2)}$, which represents an important type of strategy for impulse control problem.
Specifically, we set two levels $c_1$ and $c_2$ such that $c_1\geq0$ and $c_2>c_1+\beta$, and fix a set of the stopping times $\{\theta_k^{(c_1, c_2)}, k=1,2,\cdots\}$, where $\theta_k^{(c_1, c_2)}$ denotes the time at which the process $U^{(c_1, c_2)}$ exceeds $c_2$ for the $k$-th time, i.e. $\theta_1^{(c_1, c_2)}:=\inf\{t\geq 0: U_t^{(c_1, c_2)}>c_2\}$ and $\theta_k^{(c_1, c_2)}:=\inf\{t>\theta_{k-1}^{(c_1, c_2)}: U_t^{(c_1, c_2)}>c_2\}$ for $k\geq 2$.
The expected discounted dividend function and ruin time for the strategy $\pi_{(c_1, c_2)}$ are denoted as $V_{(c_1, c_2)}$ and $T^{(c_1, c_2)}$, respectively. We also denote $\tau_p^{\pi}$ and $k_{-l}^{\pi}$ by $\tau_p^{(c_1, c_2)}$ and $k_{-l}^{(c_1, c_2)}$, respectively. In summary, the impulse strategy $\pi_{(c_1, c_2)}$ is to reduce the risk process $U^{(c_1, c_2)}$ to $c_1$ whenever it exceeds level $c_2$. Here, $c_2-c_1>\beta$ ensures that shareholders receive dividends after paying the transaction costs, while $c_1\geq0$ is based on admissible constraint \eqref{admiss}.

\subsection{Expected discounted dividend function for impulse strategy $\pi_{(c_1, c_2)}$}
We first present an expression $V_{(c_1, c_2)}$ for a general impulse strategy $\pi_{(c_1, c_2)}$ with the ruin time $T^{(c_1, c_2)}$.
Recall that, $q\geq0$ represents the discount factor, $m\geq0$ is the parameter of the exponential delay, $-l<0$ is the ultimate bankrupt barrier, and $\beta>0$ denotes the transaction fee.
\begin{proposition}\label{prop 4.1}
For $c_1\geq0$, $c_2>c_1+\beta$ and $x\in[-l, \infty)$, we have
	\begin{eqnarray}\label{23}
		&V_{(c_1, c_2)}(x)=\left \{
		\begin{array}{ll}
			(c_2-c_1-\beta)\frac{\vartheta^{(q+m, q)}(x, -l)}{\vartheta^{(q+m, q)}(c_2, -l) -\vartheta^{(q+m, q)}(c_1, -l)},    \ \ \  & \mbox{for} \ -l \leq x\leq c_2, \\
			x-c_1-\beta+\frac{(c_2-c_1-\beta)\vartheta^{(q+m, q)}(c_1, -l)}{\vartheta^{(q+m, q)}(c_2, -l) -\vartheta^{(q+m, q)}(c_1, -l)},      & \mbox{for}\ x> c_2,
		\end{array} \right.
	\end{eqnarray}
where $\vartheta^{(q+m, q)}(\cdot, -l)$ is defined in \eqref{9}.
\end{proposition}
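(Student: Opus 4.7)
The plan is to exploit the renewal structure created by the strategy $\pi_{(c_1,c_2)}$: between two consecutive dividend times, the controlled process $U^{(c_1,c_2)}$ just evolves as the uncontrolled refracted L\'{e}vy process $R$, and at each dividend event the process is reset to the fixed level $c_1$ while the shareholder collects the net amount $c_2-c_1-\beta$. The strong Markov property will reduce everything to one exit identity already available in Proposition~\ref{prop 3.1}.

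I will treat the regime $-l\leq x\leq c_2$ first. Between time $0$ and the first dividend payment at $\theta_1^{(c_1,c_2)}=k_{c_2}^+$, no cashflow is generated; ruin may also occur before this time, in which case nothing is paid. Conditioning on whether $\{k_{c_2}^+<T\}$ and using the strong Markov property of the refracted L\'{e}vy process together with the fact that $U^{(c_1,c_2)}_{\theta_1^{(c_1,c_2)}}=c_1$ immediately after the dividend payment, I obtain the one-step recursion
\begin{align*}
V_{(c_1,c_2)}(x) \;=\; E_x\!\left[e^{-q k_{c_2}^+}\mathbf{1}\{k_{c_2}^+<T\}\right]\Big(c_2-c_1-\beta + V_{(c_1,c_2)}(c_1)\Big).
\end{align*}
Proposition~\ref{prop 3.1} identifies the prefactor as $\vartheta^{(q+m,q)}(x,-l)/\vartheta^{(q+m,q)}(c_2,-l)$. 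Setting $x=c_1$ turns this into a linear equation for $V_{(c_1,c_2)}(c_1)$ whose solution is
\begin{align*}
V_{(c_1,c_2)}(c_1)=\frac{(c_2-c_1-\beta)\,\vartheta^{(q+m,q)}(c_1,-l)}{\vartheta^{(q+m,q)}(c_2,-l)-\vartheta^{(q+m,q)}(c_1,-l)}.
\end{align*}
Plugging this back into the recursion and simplifying yields the first branch of \eqref{23}.

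For $x>c_2$ the strategy forces an immediate lump dividend of size $x-c_1$ (net $x-c_1-\beta$) that resets the process to $c_1$. Hence
\begin{align*}
V_{(c_1,c_2)}(x) \;=\; (x-c_1-\beta) + V_{(c_1,c_2)}(c_1),
\end{align*}
and substituting the expression for $V_{(c_1,c_2)}(c_1)$ obtained above gives the second branch of \eqref{23}.

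There is no real analytic obstacle here: the argument is structural, relying only on the strong Markov property and on Proposition~\ref{prop 3.1}. The one point requiring a small amount of care is to justify that the ruin clock does not ``remember'' the excursions of $U^{(c_1,c_2)}$ before $\theta_1^{(c_1,c_2)}$ when restarting from $c_1$; this follows because each dividend is paid when the process is strictly above $0$ (indeed above $c_2\geq 0$), so the Parisian exponential clock is not running at the restart time and the post-$\theta_1^{(c_1,c_2)}$ process is, under $P_x$, an independent copy of the refracted L\'{e}vy process started at $c_1$ with its ruin time defined exactly as $T$ is for $R$ started at $c_1$. I would also remark that the admissibility constraints $c_2-c_1>\beta$ and $c_1\geq 0$ enter only to guarantee that the strategy is in $\Pi$ (so that the net dividend is positive and the surplus never falls below $0$ at a dividend time), and do not affect the computation.
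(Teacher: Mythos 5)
Your argument is essentially the same as the paper's: both proofs reduce to a one-step renewal identity via the strong Markov property and Proposition~\ref{prop 3.1}, and both solve a linear equation for $V_{(c_1,c_2)}(c_1)$. The paper writes the recursion as two separate relations, namely $V_{(c_1,c_2)}(x)=\frac{\vartheta^{(q+m,q)}(x,-l)}{\vartheta^{(q+m,q)}(c_2,-l)}V_{(c_1,c_2)}(c_2)$ for $x<c_2$ and $V_{(c_1,c_2)}(c_2)=c_2-c_1-\beta+V_{(c_1,c_2)}(c_1)$, and then solves the resulting $2\times2$ system; you collapse these into the single recursion $V_{(c_1,c_2)}(x)=E_x[e^{-qk_{c_2}^+}\mathbf{1}\{k_{c_2}^+<T\}](c_2-c_1-\beta+V_{(c_1,c_2)}(c_1))$ and set $x=c_1$, which is algebraically identical. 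Your remark about the Parisian clock not carrying memory across the reset to $c_1\geq 0$ is a useful explicit justification of a point the paper leaves implicit. The one thing the paper does that you omit is to flag that the boundary relations at $x=c_1$ and $x=c_2$ are established directly only in the bounded-variation case, with the unbounded-variation case handled via the approximation argument of Loeffen, Renaud and Zhou; this does not change the structure of the proof but should be acknowledged, since your identification of $\theta_1^{(c_1,c_2)}$ (defined with strict inequality) with $k_{c_2}^+$ (defined with weak inequality) is exactly where that subtlety lives when $x=c_2$.
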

\begin{proof}
For $x\in[-l, c_2)$, applying the strong Markov property together with the fact that no dividends are paid out until the surplus process $R$ exceeds the level $c_2$, from \eqref{11} it follows that
	\begin{eqnarray}\label{24}
		&&V_{(c_1, c_2)}(x)=E_x[e^{-q k_{c_2}^+}\mathbf{1}\{k_{c_2}^+<T\}]V_{(c_1, c_2)}(c_2)
		=\frac{\vartheta^{(q+m, q)}(x,-l)}{\vartheta^{(q+m, q)}(c_2, -l)}V_{(c_1, c_2)}(c_2).
	\end{eqnarray}
We now examine the value of $V_{(c_1, c_2)}$ at $x=c_1$ and $x=c_2$. When $X$ has paths of BV, for $x=c_1$, by \eqref{24}, we have
	\begin{eqnarray}\label{25}
		V_{(c_1, c_2)}(c_1)=\frac{\vartheta^{(q+m, q)}(c_1,-l)}{\vartheta^{(q+m, q)}(c_2, -l)}V_{(c_1, c_2)}(c_2),
	\end{eqnarray}
as well as for $x=c_2$, a dividend of amount $c_2-c_1$ is paid immediately and incur a transaction cost of size $\beta$, and then re-applying the strong Markov property, we get
	\begin{eqnarray}\label{26}
		V_{(c_1, c_2)}(c_2)=c_2-c_1-\beta+V_{(c_1, c_2)}(c_1).
	\end{eqnarray}
By solving a system of equations given in Eqs. \eqref{25} and \eqref{26}, we obtain
	\begin{eqnarray}
		&&V_{(c_1, c_2)}(c_1)=\frac{(c_2-c_1-\beta)\vartheta^{(q+m, q)}(c_1, -l)}{\vartheta^{(q+m, q)}(c_2, -l) -\vartheta^{(q+m, q)}(c_1, -l)},\label{vc1c2c1}\\
		&&V_{(c_1, c_2)}(c_2)=\frac{(c_2-c_1-\beta)\vartheta^{(q+m, q)}(c_2, -l)}{\vartheta^{(q+m, q)}(c_2, -l) -\vartheta^{(q+m, q)}(c_1, -l)}.\label{vc1c2c2}
	\end{eqnarray}
For the UBV case, the approximation approach proposed by Loeffen, Renaud, and Zhou\cite{Loeffen2014} yields a result identical to that in the BV case.
Further plugging \eqref{vc1c2c2} and \eqref{vc1c2c1} into \eqref{24} and \eqref{26}, respectively, we get the expression of $V_{(c_1, c_2)}(x)$ for $-l\leq x\leq c_2$ in \eqref{23}.

For $x> c_2$, since the surplus process $R$ immediately reduces to $c_1$ whenever it exceeds level $c_2$, we have
$V_{(c_1, c_2)}(x)=x-c_1-\beta+V_{(c_1, c_2)}(c_1)$, and then,
by \eqref{vc1c2c1} we obtain the form of $V_{(c_1, c_2)}(x)$ for $x> c_2$ in \eqref{23}.
\end{proof}
Based on the form of $V_{(c_1, c_2)}(x)$ in \eqref{23}, the optimal points $(c_1, c_2)$ should minimize the function below
\begin{eqnarray*}
	&&H(c_1, c_2)=\frac{\vartheta^{(q+m, q)}(c_2, -l) -\vartheta^{(q+m, q)}(c_1, -l)}{c_2-c_1-\beta},
\end{eqnarray*}
where the domain of $H$ is given by $dom(H)=\{(c_1, c_2): c_1\geq 0, c_2>c_1+\beta\}$. Let $C^*$ be the set of $(c_1, c_2)$ from $dom(H)$ that minimizes function $H$, namely $$C^*=\{(c_1^*, c_2^*)\in dom(H): H(c_1^*, c_2^*)=\inf_{(c_1, c_2)\in dom(H)}H(c_1, c_2)\}.$$
Now, we show that $C^*$ is non-empty, and provide necessary conditions for $(c_1^*, c_2^*)\in C^*$, which will play a central role in numerically finding the optimal parameters $c_1^*$ and $c_2^*$.
\begin{proposition}\label{prop 4.2}
Suppose $W^{(q)}(\cdot)\in \mathbb{C}^1((0, \infty))$. The set $C^*$ is non-empty, and for each ${(c_1^*, c_2^*)}\in C^*$, one of the following mutually exclusive cases applies:
\begin{enumerate}[label=(\roman*)]
\item $c_1^*, c_2^*\in (0, \infty)\setminus \{b\}$, $c_2^*>c_1^* +\beta$ and $\vartheta^{(q+m, q)}{'}(c_2^*, -l)
 =\vartheta^{(q+m, q)}{'}(c_1^*, -l)=H(c_1^*, c_2^*)$;
\item $c_1^*=b$, $c_2^*\in (b+\beta, \infty)$, and $\vartheta^{(q+m, q)}{'}(c_2^*, -l) = H(b, c_2^*)$;
\item $c_1^*=0$, $c_2^*\in (\beta, \infty)\setminus \{b\}$, and
$\vartheta^{(q+m, q)}{'}(c_2^*, -l) = H(0, c_2^*)$;
\item $c_2^*=b$, $c_1^*\in(0, b-\beta)$ with $b>\beta$, and
$\vartheta^{(q+m, q)}{'}(c_1^*, -l)=H(c_1^*, b)$;
 \item $c_1^* = 0$, $c_2^*=b$ with $b>\beta$.
\end{enumerate}
\end{proposition}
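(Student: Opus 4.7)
The plan is to first show that the infimum of $H$ is attained by a standard continuity and coercivity argument, and then to derive the five cases from the first-order conditions of $H$, paying careful attention to the at-most two points ($c=0$ and $c=b$) where $\vartheta^{(q+m,q)}(\cdot,-l)$ may fail to be $\mathbb{C}^1$, as identified by Proposition~\ref{prop 2.1} together with the definition \eqref{9}.

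For non-emptiness, I would first note that $H$ is continuous on $dom(H)$, inherited from the continuity of $\vartheta^{(q+m,q)}(\cdot,-l)$. Next I would verify two blow-up properties: as $c_2\downarrow c_1+\beta$ with $c_1$ bounded, the denominator vanishes while the numerator is bounded below by $\vartheta^{(q+m,q)}(c_1+\beta,-l)-\vartheta^{(q+m,q)}(c_1,-l)>0$ (by strict monotonicity of the Parisian refracted scale function); and as $c_2\to\infty$ the numerator grows exponentially---inherited from the classical asymptotics of $W^{(q)}$ and $\mathbb{W}^{(q)}$ at infinity---dominating the linear denominator. Together these force $H\to\infty$ whenever $(c_1,c_2)$ approaches the boundary $\{c_2=c_1+\beta\}$ or escapes to infinity, so every minimizing sequence has a limit point in $dom(H)$ and the infimum is attained.

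For the characterization, I would compute, at any point where $\vartheta^{(q+m,q)}{'}(\cdot,-l)$ exists at both coordinates,
\begin{align*}
\frac{\partial H}{\partial c_1}&=\frac{-\vartheta^{(q+m,q)}{'}(c_1,-l)(c_2-c_1-\beta)+\vartheta^{(q+m,q)}(c_2,-l)-\vartheta^{(q+m,q)}(c_1,-l)}{(c_2-c_1-\beta)^2},\\
\frac{\partial H}{\partial c_2}&=\frac{\vartheta^{(q+m,q)}{'}(c_2,-l)(c_2-c_1-\beta)-\vartheta^{(q+m,q)}(c_2,-l)+\vartheta^{(q+m,q)}(c_1,-l)}{(c_2-c_1-\beta)^2}.
\end{align*}
Setting $\partial H/\partial c_i=0$ yields the clean identity $\vartheta^{(q+m,q)}{'}(c_i,-l)=H(c_1,c_2)$ for each coordinate individually. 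At an optimizer $(c_1^*,c_2^*)$, this FOC must hold in the $c_i^*$ direction whenever $c_i^*$ is interior to $dom(H)$ \emph{and} $\vartheta^{(q+m,q)}(\cdot,-l)$ is differentiable at $c_i^*$. Enumerating over whether $c_1^*\in\{0,b\}$ and whether $c_2^*=b$---noting that $c_2^*=0$ is excluded by $c_2^*>\beta>0$, and $c_1^*=c_2^*=b$ is excluded by $c_2^*>c_1^*+\beta$---produces exactly the mutually exclusive cases (i)--(v). The side constraints $c_2^*>c_1^*+\beta$ and $c_1^*\geq 0$ force $b>\beta$ in cases (iv) and (v).

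I expect the main obstacle to be making precise that the smooth FOC really can fail at $c_i^*\in\{0,b\}$ without contradicting optimality, so that cases (ii)--(v) legitimately list only the conditions on the smooth coordinate (or none at all in case (v)). At $c_i^*=b$ in the BV setting, only one-sided derivatives of $\vartheta^{(q+m,q)}(\cdot,-l)$ exist, and the one-sided optimality inequalities supply only a weaker bracketing rather than the equality $\vartheta^{(q+m,q)}{'}(c_i^*,-l)=H(c_1^*,c_2^*)$; the analogous remark applies at the domain boundary $c_1^*=0$. The tight coupling between this case decomposition and the smoothness structure of $\vartheta^{(q+m,q)}(\cdot,-l)$ supplied by Proposition~\ref{prop 2.1} is what makes the statement sharp.
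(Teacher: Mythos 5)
Your approach is essentially the same as the paper's: establish non-emptiness via continuity and coercivity (showing $H$ blows up near the boundary and at infinity), then derive the case split from first-order conditions while respecting the exceptional points $\{0,b\}$ where $\vartheta^{(q+m,q)}(\cdot,-l)$ may lose differentiability. Your FOC computation is correct and matches the paper's \eqref{27}, and your observation that at $c_i^*=b$ in the BV setting only one-sided derivatives exist is a worthwhile clarification that the paper handles implicitly by simply omitting the FOC at $b$ in cases (ii) and (iv).

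There is, however, a genuine gap in your non-emptiness argument. You treat two blow-up regimes: $c_2\downarrow c_1+\beta$ with $c_1$ bounded, and $c_2\to\infty$ with (implicitly) $c_1$ bounded. But the domain is unbounded in both coordinates, and a minimizing sequence could have $c_1\to\infty$ with $c_2-c_1$ bounded and bounded away from $\beta$. In that regime your exponential-numerator-versus-linear-denominator comparison fails: the numerator $\vartheta^{(q+m,q)}(c_2,-l)-\vartheta^{(q+m,q)}(c_1,-l)$ is a difference of two exploding terms and need not grow exponentially. The paper handles this as its first case by applying the mean value theorem to get $H(c_1,c_2)> \min_{x\ge c_1}W^{(q)}{'}(x)\bigl(1+m\int_0^l W^{(q+m)}(y)\,\mathrm{d}y\bigr)$, which tends to infinity since $W^{(q)}{'}(x)\to\infty$. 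That gives an a priori upper bound $c_1\le C_1$, after which your two regimes exhaust the remaining escapes. You should add this third estimate; without it the claim ``every minimizing sequence has a limit point in $dom(H)$'' is unjustified.
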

\begin{proof}
We start by showing that $C^*$ is non-empty through the following three cases.
The first case: as $c_1\uparrow \infty$, the function $H$ can not attain its minimum, as proved below. Recall that $W^{(q)}$ is positive and strictly increasing.
By \eqref{1wqde}, we have
\begin{align}\label{wws}
&w^{(q)}(c_2; -l)-w^{(q)}(c_1; -l)\geq W^{(q)}(c_2+l)-W^{(q)}(c_1+l).
\end{align}
Then, by \eqref{9} and \eqref{wws}, using the mean value theorem, we obtain
	\begin{align}\label{sdf}
		&H(c_1, c_2)\\
&=\frac{w^{(q)}(c_2;-l)-w^{(q)}(c_1;-l)
+m\int_0^l \big(w^{(q)}(c_2;-l+y)-w^{(q)}(c_1;-l+y)\big)W^{(q+m)}(y)dy}
{c_2-c_1-\beta}\nonumber\\
&\geq \frac{W^{(q)}(c_2+l)-W^{(q)}(c_1+l)
+m\int_0^l \big(W^{(q)}(c_2+l-y)-W^{(q)}(c_1+l-y)\big)W^{(q+m)}(y)\mathrm{d}y}{c_2-c_1}\nonumber\\
& \ \ \ \cdot \frac{c_2-c_1}{c_2-c_1-\beta} \nonumber \\
&> \min_{x\in[c_1, c_2+l]}
W^{(q)}{'}(x)\big(1+m\int_0^l W^{(q+m)}(y)\mathrm{d}y\big)
\geq \min_{x\in[c_1, \infty)}
W^{(q)}{'}(x)\big(1+m\int_0^l W^{(q+m)}(y)\mathrm{d}y\big).\nonumber
	\end{align}
	Because of $\lim_{x\rightarrow \infty}W^{(q)}{'}(x)=\infty$, we have $H(c_1, c_2)\stackrel{c_1\uparrow\infty}{\longrightarrow}\infty$, this implies that the infimum of $H$ is not attained as $c_1\uparrow\infty$.
So, there exists $C_1$ such that $c_1\leq C_1$ and $\inf_{(c_1, c_2)\in dom(H)}$ $H(c_1, c_2)=\inf_{(c_1, c_2)\in dom(H)\wedge (c_1\leq C_1)}H(c_1, c_2)$.
The second case: as $c_2\uparrow \infty$, the minimization of $H$ is not attained, which we demonstrate as follows. By \eqref{9} and \eqref{wws}, using the increasing property of $W^{(q)}$, we have
	\begin{align*}
		& \inf_{c_1\in[0, C_1]}H(c_1, c_2)
\geq \inf_{c_1\in[0, C_1]}\frac{1}{c_2-c_1-\beta} \\
& \cdot \Big(W^{(q)}(c_2+l)-W^{(q)}(c_1+l)
+m\int_0^l \big(W^{(q)}(c_2+l-y)-W^{(q)}(c_1+l-y)\big)W^{(q+m)}(y)\mathrm{d}y\Big)\\
&\geq \frac{W^{(q)}(c_2+l)+m\int_0^l W^{(q)}(c_2+l-y)W^{(q+m)}(y)\mathrm{d}y}{c_2-\beta}\\
 & \ \ \ -\frac{W^{(q)}(C_1+l)}{c_2-C_1-\beta}
 \cdot \big(1+m\int_0^l W^{(q+m)}(y)\mathrm{d}y\big).
	\end{align*}
Then, by L'H\^{o}pital's rule, we have
\begin{align*}
&\lim_{c_2\rightarrow \infty} \inf_{c_1\in[0, C_1]}H(c_1, c_2) = \lim\limits_{c_2\rightarrow\infty} \big(W^{(q)}{'}(c_2+l)+m\int_0^l W^{(q)}{'}(c_2+l-y)W^{(q+m)}(y)\mathrm{d}y\big) = \infty,
 \end{align*}
and it is necessary to restrict $c_2$ to be bounded. The third case: when $(c_1, c_2)$ converges to the line $c_2=c_1+\beta$, the function $H$ does not attain its minimum, the proof is as follows. Since $c_2>c_1+\beta$, by \eqref{sdf}, using the mean value theorem, we have
	\begin{eqnarray*}
		H(c_1, c_2)\geq \min_{x\in[c_1, c_2+l]}
W^{(q)}{'}(x)\big(1+m\int_0^l W^{(q+m)}(y)\mathrm{d}y\big)
\frac{\beta}{c_2-c_1-\beta}
		\stackrel{c_2\downarrow c_1+\beta}{\longrightarrow} \infty.
	\end{eqnarray*}
So then, we can rule out the possibility of $c_2$ converging to $c_1+\beta$.
Based on the aforementioned three cases, combined with the continuity of $H$, we can conclude that the set $C^*$ is non-empty.

Next, we proceed to prove the necessary conditions for $(c_1^*, c_2^*)\in C^*$. Since $W^{(q)}(\cdot)\in \mathbb{C}^{1}((0, \infty))$, by Proposition \ref{prop 4.5}, we get $\vartheta^{(q+m, q)}(\cdot, -l)\in \mathbb{C}^{1}((-l, \infty)\setminus \{0, b\})$ for $X$ is of BV.
For $c_1, c_2\in(0, \infty)\setminus \{b\}$, by considering the function $H$ is partially differentiable in $c_1$ and $c_2$, and it attains a local minimum at point $(c_1^*, c_2^*)$, we have $\frac{\partial H(c_1, c_2)}{\partial c_1}\Big|_{(c_1,c_2)=(c_1^*, c_2^*)}=0$ and $\frac{\partial H(c_1, c_2)}{\partial c_2}\Big|_{(c_1, c_2)=(c_1^*, c_2^*)}=0$,
which, by direct computation, is equivalent to the condition $\vartheta^{(q+m, q)}{'}(c_2^*, -l)= \vartheta^{(q+m, q)}{'}(c_1^*, -l)$, where
\begin{eqnarray}\label{27}
		&&\vartheta^{(q+m, q)}{'}(c_2^*, -l)=\frac{\vartheta^{(q+m, q)}(c_2^*, -l) -\vartheta^{(q+m, q)}(c_1^*, -l)}{c_2^*-c_1^*-\beta} =H(c_1^*, c_2^*),
	\end{eqnarray}
thereby confirming that condition (i) is satisfied.
For $c_1^*=b$ and $c_2^*\in(b+\beta, \infty)$, by considering $c_2^*$ minimizes the function
$H(b, c_2)$, we have
$\frac{\mathrm{d} H(b, c_2)}{\mathrm{d} c_2}|_{c_2=c_2^*}=0$, which implies (ii) holds. For $c_1^*=0$ and $c_2^*\in(\beta, \infty)\setminus \{b\}$, from $\frac{\mathrm{d} H(0, c_2)}{\mathrm{d} c_2}|_{c_2=c_2^*}=0$, we can obtain (iii) holds.
For $c_2^*=b$ and $c_1^*\in(0, b-\beta)$ with $b>\beta$, from $\frac{\mathrm{d} H(c_1, b)}{\mathrm{d} c_1}|_{c_1=c_1^*}=0$, we can also deduce (iv) holds.
Excluding the scenarios listed above, the only remaining admissible configuration is $c_1^*=0, c_2^*=b$ with $b > \beta$.
\end{proof}
From Propositions \ref{prop 4.1} and \ref{prop 4.2}, we can obtain the following result.
\begin{proposition}\label{corol 4.3}
 Suppose $W^{(q)}(\cdot)\in \mathbb{C}^1((0,\infty))$. For each $(c_1^*, c_2^*)\in C^*$ with $c_1^*\geq 0$, $c_2^*\in(c_1^* +\beta, \infty)$ excluding the point $c_2^* =b$ when $X$ is of BV, and $x\in[-l, \infty)$, we have
	\begin{eqnarray*}
		&V_{(c_1^*, c_2^*)}(x)=\left \{
		\begin{array}{ll}
			\frac{\vartheta^{(q+m, q)}(x, -l)}{\vartheta^{(q+m, q)}{'}(c_2^*, -l)},    \ \ \  & \mbox{for} \ -l\leq x\leq c_2^*, \\
			x-c_2^*+\frac{\vartheta^{(q+m, q)}(c_2^*, -l)}{\vartheta^{(q+m, q)}{'}(c_2^*, -l)},      & \mbox{for} \ x> c_2^*.
		\end{array} \right.
	\end{eqnarray*}
\end{proposition}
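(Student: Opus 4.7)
The plan is to substitute the first-order optimality conditions from Proposition~\ref{prop 4.2} into the general formula for $V_{(c_1, c_2)}$ given in Proposition~\ref{prop 4.1}. I first rewrite the expression of Proposition~\ref{prop 4.1} on $-l \leq x \leq c_2$ as
\[
V_{(c_1, c_2)}(x) = \frac{\vartheta^{(q+m, q)}(x, -l)}{H(c_1, c_2)},
\]
so that $H(c_1^*, c_2^*)$ appears explicitly in the denominator, inviting direct substitution of the minimization identities that characterize $C^*$.

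The key verification is that at each admissible optimizer $(c_1^*, c_2^*)$ the identity $H(c_1^*, c_2^*) = \vartheta^{(q+m, q)}{'}(c_2^*, -l)$ holds. In cases (i)--(iii) of Proposition~\ref{prop 4.2} this is stated outright. The remaining cases (iv) and (v), which correspond to $c_2^* = b$, are excluded from the statement in the BV setting by hypothesis. In the UBV setting they remain, but Proposition~\ref{prop 2.1} then guarantees that $\vartheta^{(q+m, q)}(\cdot, -l)$ is $C^1$ across the refraction level $b$; consequently $\partial_{c_2} H(c_1^*, b) = 0$ is well-defined at the minimum and yields the same equality $\vartheta^{(q+m, q)}{'}(b, -l) = H(c_1^*, b)$. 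With this identity in hand, the first branch of the claimed formula on $[-l, c_2^*]$ follows immediately from the display above.

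For $x > c_2^*$, the second piece of Proposition~\ref{prop 4.1} becomes, after the same substitution,
\[
V_{(c_1^*, c_2^*)}(x) = x - c_1^* - \beta + \frac{\vartheta^{(q+m, q)}(c_1^*, -l)}{\vartheta^{(q+m, q)}{'}(c_2^*, -l)}.
\]
A one-line rearrangement using
\[
(c_2^* - c_1^* - \beta)\,\vartheta^{(q+m, q)}{'}(c_2^*, -l) = \vartheta^{(q+m, q)}(c_2^*, -l) - \vartheta^{(q+m, q)}(c_1^*, -l),
\]
which is nothing but $H(c_1^*, c_2^*) = \vartheta^{(q+m, q)}{'}(c_2^*, -l)$ in disguise, converts this into $x - c_2^* + \vartheta^{(q+m, q)}(c_2^*, -l)/\vartheta^{(q+m, q)}{'}(c_2^*, -l)$, as required. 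The only substantive point in the whole argument is the case analysis justifying the identity $H(c_1^*, c_2^*) = \vartheta^{(q+m, q)}{'}(c_2^*, -l)$ across all permitted configurations, together with the observation that the exclusion of $c_2^* = b$ in the BV regime is exactly what is needed to ensure the denominator $\vartheta^{(q+m, q)}{'}(c_2^*, -l)$ exists; once these are in place, the proof is pure algebra.
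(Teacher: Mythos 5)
Your proof is correct and follows essentially the same route as the paper's: take the general formula of Proposition~\ref{prop 4.1}, observe that it reads $V_{(c_1,c_2)}(x)=\vartheta^{(q+m,q)}(x,-l)/H(c_1,c_2)$ on $[-l,c_2]$, and substitute the first-order optimality identity $\vartheta^{(q+m,q)'}(c_2^*,-l)=H(c_1^*,c_2^*)$ (the paper's equation \eqref{27}) plus the algebraic rearrangement for $x>c_2^*$. Your explicit accounting of which cases of Proposition~\ref{prop 4.2} yield this identity --- noting that cases (iv)--(v), where $c_2^*=b$, are excluded by hypothesis in the BV setting and that in the UBV setting $\vartheta^{(q+m,q)}(\cdot,-l)$ is $C^1$ across $b$ (via Proposition~\ref{prop 2.1}) so the same first-order condition applies --- is a modest but genuine tightening of the paper's proof, which simply invokes \eqref{27} without spelling out why it applies uniformly across the admissible configurations.
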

\begin{proof}
	By \eqref{27}, from Proposition \ref{prop 4.1} it follows that, for $-l\leq x\leq c_2^*$,
	\begin{eqnarray*}
		V_{(c_1^*, c_2^*)}(x)=\frac{c_2^*-c_1^*-\beta}{\vartheta^{(q+m,q)}(c_2^*, -l)-\vartheta^{(q+m, q)}(c_1^*, -l)}\vartheta^{(q+m, q)}(x, -l)=\frac{\vartheta^{(q+m, q)}(x, -l)}{\vartheta^{(q+m, q)}{'}(c_2^*, -l)}.
	\end{eqnarray*}
	For $x> c_2^*$, by \eqref{27}, we have
	\begin{align*}
		V_{(c_1^*, c_2^*)}(x)
&=x-c_1^*-\beta+\frac{c_2^*-c_1^*-\beta}{\vartheta^{(q+m,q)}(c_2^*, -l)-\vartheta^{(q+m, q)}(c_1^*, -l)}\vartheta^{(q+m, q)}(c_1^*, -l)\\
&=x-c_1^*-\beta+\frac{\vartheta^{(q+m, q)}(c_1^*, -l)}{\vartheta^{(q+m, q)}{'}(c_2^*, -l)}\\
&=x-c_2^*+\frac{(c_2^*-c_1^*-\beta)\vartheta^{(q+m, q)}{'}(c_2^*, -l)+\vartheta^{(q+m, q)}(c_1^*, -l)}{\vartheta^{(q+m, q)}{'}(c_2^*, -l)}\\
&=x-c_2^*+\frac{\vartheta^{(q+m, q)}(c_2^*, -l)}{\vartheta^{(q+m, q)}{'}(c_2^*, -l)}.
	\end{align*}
\end{proof}
Note that $V_{(c_1^*, c_2^*)}$ is an increasing function. In fact, $V_{c_2^*}:=V_{(c_1^*, c_2^*)}$ is the expected discounted dividend function associated with the barrier strategy at level $c_2^*$ for the de Finetti's dividend problem with exponential Parisian ruin and ultimate bankrupt barrier.

\begin{remark}\label{rem:limiting}
In certain limiting parameter regimes, the optimal dividend strategy $\pi_{(c_1^*,c_2^*)}$ and its value function $V_{(c_1^*,c_2^*)}(x)$ reduce to
those of several well-studied dividend problems.
(i)As $l\to\infty$, we have $T^{(c_1^*, c_2^*)}=\tau_p^{(c_1^*,c_2^*)}$, and $V_{(c_1^*,c_2^*)}$ can be expressed in terms of $F^{(q+m,q)}$, since $\lim_{l\to \infty}\vartheta^{(q+m,q)}(x, -l) = F^{(q+m,q)}(x)$ by Corollary~\ref{cor00}.
If in addition $\delta\to 0$, then problem \eqref{optpro} reduces to SNLP dividend problem with Parisian exponential delay and transaction costs, and $V_{(c_1^*,c_2^*)}$ formally coincides with that in Theorem 4.1 of Renaud\cite{Renaud2024}, since $\lim_{l\to\infty, \delta\to 0}\vartheta^{(q+m,q)}(x,-l)=H^{(q+m,q)}(x)$ by Corollary~\ref{cor22}. \\
(ii) As  $l\to 0$ (or $m\to\infty$), we have $T^{(c_1^*, c_2^*)} =k_{0}^{(c_1^*,c_2^*)}$ (classic ruin), and $V_{(c_1^*,c_2^*)}$ can be expressed in terms of $w^{(q)}(x;0)$, since $\lim_{l\to0}\vartheta^{(q+m,q)}(x,-l)=w^{(q)}(x;0)$ by Corollary \ref{cor010}. If in addition $\delta\to 0$, then problem \eqref{optpro} reduces to SNLP dividend problem with classic ruin and transaction costs, and $V_{(c_1^*,c_2^*)}$ coincides with that in Corollary 4 of Loeffen\cite{Loeffen2009}, since $\lim_{l\to0, \delta\to 0}\vartheta^{(q+m,q)}(x,-l)=W^{(q)}(x)$ by Corollary~\ref{cor22}. Moreover, when $\delta, \beta\to 0$, problem \eqref{optpro} further reduces to SNLP dividend problem with classic ruin and without transaction costs, and $V_{(c_1^*,c_2^*)}$ matches the one in Proposition 1 of Loeffen\cite{Loeffen2008}, in particular, $(c_1^*,c_2^*)$ collapses to a single optimal barrier $b^*$.
\end{remark}

\subsection{Optimality of impulse strategy $\pi_{(c_1^*, c_2^*)}$}
The smoothness of the Parisian refracted $(q, m)$-scale function $\vartheta^{(q+m,q)}(\cdot, -l)$ on $[-l, \infty)$ is established in Proposition \ref{prop 4.5}, which, according to Proposition \ref{prop 4.1}, ensures the smoothness of $V_{(c_1^*, c_2^*)}(\cdot)$ on $[-l, \infty)$ under the ${(c_1^*, c_2^*)}$ policy. This is required for the result discussed below, which uses standard Markovian arguments to demonstrate that $V_{(c_1^*, c_2^*)}$ complies with the verification lemma provided in Lemma \ref{lem 3.4}, thereby verifying the optimality of strategy $\pi_{(c_1^*, c_2^*)}$.

\begin{lemma}\label{lem 4.6}
Suppose $\vartheta^{(q+m,q)}(\cdot, -l)$ is sufficiently smooth on $[-l, \infty)$. Then, the function $V_{(c_1^{*}, c_2^{*})}(x)$ satisfies the HJB inequalities \eqref{21}-\eqref{22}, and therefore, $V_{*}=V_{(c_1^{*}, c_2^{*})}$ a.e., and $\pi_*=\pi_{(c_1^*, c_2^*)}$.
\end{lemma}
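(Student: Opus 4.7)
The strategy is to invoke the verification lemma, Lemma~\ref{lem 3.4}(ii), with $V = V_{(c_1^*, c_2^*)}$. The required smoothness on $[-l, \infty)$, with at most isolated breakdowns of the first or second derivative at points in $\{0, b, c_2^*\}$, follows by combining Proposition~\ref{prop 4.5} for $\vartheta^{(q+m,q)}(\cdot, -l)$ with the piecewise expressions in Proposition~\ref{prop 4.1} and Corollary~\ref{corol 4.3}. It therefore suffices to verify the HJB inequalities \eqref{21} and \eqref{22} for this particular $V$.

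For \eqref{21} I would proceed region by region. On $(-l, c_2^*) \setminus \{0, b\}$, Corollary~\ref{corol 4.3} gives $V_{(c_1^*, c_2^*)}(x) = \vartheta^{(q+m,q)}(x, -l)/\vartheta^{(q+m,q){'}}(c_2^*, -l)$. Because Proposition~\ref{prop 3.1} identifies $\vartheta^{(q+m,q)}(\cdot, -l)$ as the object driving the martingale $e^{-q(t\wedge k_{c_2^*}^+\wedge T) - L(t\wedge k_{c_2^*}^+\wedge T)}\vartheta^{(q+m,q)}(R_{t\wedge k_{c_2^*}^+\wedge T}, -l)$, with $L(t)=\int_0^t m\mathbf{1}\{R_s<0\}\mathrm{d}s$, applying Lemma~\ref{lemdiff} with $f = \vartheta^{(q+m,q)}(\cdot, -l)$ and reading off the finite-variation drift yields the generator identity $(\mathcal{A}-q-m\mathbf{1}\{x<0\})\vartheta^{(q+m,q)}(x, -l) = 0$, hence equality in \eqref{21} on this region. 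On $(c_2^*, \infty)$, $V_{(c_1^*, c_2^*)}$ is affine of unit slope, so $(\mathcal{A}-q)V_{(c_1^*, c_2^*)}(x)$ reduces to $(\gamma-\delta) - qV_{(c_1^*, c_2^*)}(x) + \int_0^\infty [V_{(c_1^*, c_2^*)}(x-z) - V_{(c_1^*, c_2^*)}(x) + z\mathbf{1}\{0<z<1\}]\nu(\mathrm{d}z)$; its non-positivity follows from the smooth pasting $V_{(c_1^*, c_2^*)}{'}(c_2^*-) = 1 = V_{(c_1^*, c_2^*)}{'}(c_2^*+)$ (immediate from Corollary~\ref{corol 4.3}) combined with the slope bound $V_{(c_1^*, c_2^*)}{'} \leq 1$ on $[-l, c_2^*]$, which allows the integrand to be dominated by an expression whose integral has just been shown to vanish as $x \downarrow c_2^*$.

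For \eqref{22} the key observation is the boundary identity $V_{(c_1^*, c_2^*)}(c_2^*) - V_{(c_1^*, c_2^*)}(c_1^*) = c_2^* - c_1^* - \beta$, which is immediate from Proposition~\ref{prop 4.1} together with the first-order condition $\vartheta^{(q+m,q){'}}(c_2^*, -l) = H(c_1^*, c_2^*)$ established in Proposition~\ref{prop 4.2}. For arbitrary $x \geq y \geq 0$ with $x - y > \beta$ I would write $V_{(c_1^*, c_2^*)}(x) - V_{(c_1^*, c_2^*)}(y) = \int_y^x V_{(c_1^*, c_2^*)}{'}(u)\mathrm{d}u$ (splitting the integral at $c_1^*$ or $c_2^*$ whenever they lie inside $[y,x]$) and bound the integrand below using $V_{(c_1^*, c_2^*)}{'} \leq 1$ on $[0, c_2^*]$ and $V_{(c_1^*, c_2^*)}{'} = 1$ on $(c_2^*, \infty)$; the boundary identity above absorbs the $-\beta$ correction whenever $[y, x]$ straddles the jump region $(c_1^*, c_2^*]$, and when it does not, the slope bound alone gives $V(x) - V(y) \geq 0 \geq x - y - \beta\mathbf{1}\{x>y\}$ (the case $x=y$ being trivial).

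The main obstacle is the slope estimate $\vartheta^{(q+m,q){'}}(u, -l) \leq \vartheta^{(q+m,q){'}}(c_2^*, -l)$ for $u \in [0, c_2^*]$, equivalently $V_{(c_1^*, c_2^*)}{'} \leq 1$ on that interval. In the generic case Proposition~\ref{prop 4.2}(i), both $c_1^*$ and $c_2^*$ are critical points of $\vartheta^{(q+m,q){'}}(\cdot, -l)$, so ruling out any additional point at which this derivative exceeds $H(c_1^*, c_2^*)$ is precisely the content of the monotonicity-based admissibility criterion announced in the introduction. The four boundary configurations Proposition~\ref{prop 4.2}(ii)--(v), together with the first-derivative kink at $b$ in the BV case and the regions $[-l,0)$ and $[0,b]$ where the $m\mathbf{1}\{x<0\}$ term and the refracted drift switch on, will require separate but entirely parallel verifications.
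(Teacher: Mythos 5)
Your overall strategy---invoking the verification lemma (Lemma~\ref{lem 3.4}) with $V = V_{(c_1^*,c_2^*)}$, handling \eqref{21} by a martingale/generator argument on $[-l, c_2^*)$, and dealing with $x > c_2^*$ via a Loeffen-style argument---matches the paper's structure, and your observation that the slope-monotonicity condition on $\vartheta^{(q+m,q)\prime}(\cdot,-l)$ is the essential obstacle for $x>c_2^*$ is the right diagnosis (the paper in fact defers that case to Theorem~\ref{theorem 4.7}, under the extra hypothesis \eqref{29}).

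However, your treatment of \eqref{22} contains a genuine error. You assert the slope bound $V_{(c_1^*,c_2^*)}'\le 1$ on $[0,c_2^*]$, equivalently $\vartheta^{(q+m,q)\prime}(u,-l)\le \vartheta^{(q+m,q)\prime}(c_2^*,-l)$ for $u\in[0,c_2^*]$. This is false on $[0,c_1^*]$: since $\vartheta^{(q+m,q)\prime}(\cdot,-l)$ is \emph{decreasing} on $(0,\varepsilon_2)$ and $c_1^*\le\varepsilon_2$, one has $\vartheta^{(q+m,q)\prime}(u,-l)\ge\vartheta^{(q+m,q)\prime}(c_1^*,-l)=\vartheta^{(q+m,q)\prime}(c_2^*,-l)$ for $u\in[0,c_1^*]$, so $V'\ge 1$ there, not $\le 1$. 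Moreover your fallback ``when $[y,x]$ does not straddle $(c_1^*,c_2^*]$, the slope bound alone gives $V(x)-V(y)\ge 0 \ge x-y-\beta\mathbf{1}\{x>y\}$'' is wrong: the inequality \eqref{22} only needs to be verified when $x-y>\beta\mathbf{1}\{x>y\}$, i.e.\ $x>y+\beta$, and then $x-y-\beta>0$, so the chain $\ge 0\ge x-y-\beta$ fails. In particular the case $y,x\in(c_1^*,c_2^*)$ with $x-y>\beta$ is not covered by your argument. The paper instead verifies \eqref{22} purely algebraically from the minimality of $H$: for $x>y+\beta$ in $[0,c_2^*]$, the definition $H(c_1^*,c_2^*)\le H(y,x)$ gives directly
\[
V_{(c_1^*,c_2^*)}(x)-V_{(c_1^*,c_2^*)}(y)
=\frac{(c_2^*-c_1^*-\beta)\bigl(\vartheta^{(q+m,q)}(x,-l)-\vartheta^{(q+m,q)}(y,-l)\bigr)}{\vartheta^{(q+m,q)}(c_2^*,-l)-\vartheta^{(q+m,q)}(c_1^*,-l)}\ge x-y-\beta,
\]
and an analogous computation handles $x>c_2^*\ge y$. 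This bypasses the need for any pointwise slope estimate, which, as noted, does not hold on $[0,c_1^*]$.
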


\begin{proof}
We first prove that $V_{(c_1^{*}, c_2^{*})}$ satisfies the HJB inequality \eqref{21}.
For $-l\leq x<c_2^{*}$, to verify that $(\mathcal{A}-q -m\mathbf{1}\{x<0\})V_{(c_1^*, c_2^*)}(x)=0$, it is sufficient, by Proposition \ref{prop 4.1}, to show that the process $\{e^{-q(t\wedge \tau) - L^{(c_1^*, c_2^*)}(t\wedge \tau)} \vartheta^{(q+m,q)}(U^{(c_1^*, c_2^*)}_{t\wedge \tau}, -l)\}_{t\geq0}$ is a $P_x$-martingale, where $\tau:=T^{(c_1^*, c_2^*)} \wedge \hat{k}_{c_2^*}^+$ with $\hat{k}_{c_2^*}^+$ denoting the first time that $U^{(c_1^*,c_2^*)}$ exceeds the level $c_2^*$, and $L^{(c_1^*,c_2^*)}(t):= \int_0^t m \mathbf{1}\{U_s^{(c_1^*, c_2^*)} <0\} \mathrm{d} s$.
Note that no dividends are paid before time $t\wedge \tau$, i.e. $\tau= T \wedge k_{c_2^*}^+$, $U^{(c_1^*, c_2^*)}_{t\wedge \tau}=R_{t\wedge \tau}$ and $L^{(c_1^*,c_2^*)}(t)= L(t):=\int_0^t m \mathbf{1}\{R_s <0\} \mathrm{d} s$ for all $t\geq 0$. Let $\tilde{\tau}:=(\tau - t)^+ = \max\{0, \tau - t\}$ and $\tilde{R}_{\tilde{\tau}}:= R_{t +\tilde{\tau}}$. By strong Markov property and \eqref{11}, together with the fact $\vartheta^{(q+m,q)}(R_\tau, -l)=\vartheta^{(q+m,q)}(c_2^*, -l)\mathbf{1}\{k_{c_2^*}^+<T\}$, we have,
	\begin{align*}
		&E_x[e^{-q \tau - L(\tau)}\vartheta^{(q+m,q)}(R_\tau, -l)\mid \mathcal{F}_t]\\
&= \mathbf{1}\{t\leq \tau\}e^{-q t - L(t)}E_{x}[e^{-q (\tau-t) - L(\tau-t)}\vartheta^{(q+m,q)}(R_{\tau}, -l)\mid \mathcal{F}_t] \\ & \ \ \ +\mathbf{1}\{\tau<t\}e^{-q \tau - L(\tau)}\vartheta^{(q+m,q)}(R_\tau, -l)\\
&=\mathbf{1}\{t\leq \tau\}e^{-q t - L(t)}E_{R_t}[e^{-q \tilde{\tau} - L(\tilde{\tau})}\vartheta^{(q+m,q)}(\tilde{R}_{\tilde{\tau}}, -l)]
+\mathbf{1}\{\tau<t\}e^{-q \tau - L(\tau)}\vartheta^{(q+m,q)}(R_\tau, -l)
		\\ & =\mathbf{1}\{t\leq \tau\}e^{-q t - L(t)}\vartheta^{(q+m,q)}(R_t, -l)+\mathbf{1}\{\tau<t\}e^{-q \tau - L(\tau)}\vartheta^{(q+m,q)}(R_\tau, -l)
\\ & =e^{-q (t\wedge \tau) - L(t\wedge \tau)}\vartheta^{(q+m,q)}(R_{t\wedge \tau}, -l),
	\end{align*}
which implies the process $\{e^{-q(t\wedge \tau) - L^{(c_1^*, c_2^*)}(t\wedge\tau)} \vartheta^{(q+m,q)}(U^{(c_1^*, c_2^*)}_{t\wedge \tau}, -l)\}_{t\geq0}$ is a $P_x$-martingale. Alternatively, a more general verification of \eqref{21} can be established as follows.
As derived from Proposition \ref{prop 4.5}, the derivative of $\vartheta^{(q+m,q)}(\cdot, -l)$ fails to exist at $0$ or $b$ when $X$ is of BV, and the second left derivative of $\vartheta^{(q+m,q)}(\cdot, -l)$ at $c_2^*$ does not necessarily equal zero when $X$ is of UBV. Accordingly, $(\mathcal{A}-q-m\mathbf{1}\{x<0\})V_{(c_1^{*}, c_2^{*})}(x)$ is not well-defined.
Therefore, we moderate our claim to demonstrate that the following result holds for any $t\in[0, T]$,
	\begin{eqnarray}\label{28}
		\int_0^t e^{-q s -L^{(c_1^*, c_2^*)}(s)}(\mathcal{A}-q-m\mathbf{1}\{\tilde{U}_s^{(c_1^*, c_2^*)}<0\})V_{(c_1^*, c_2^*)}(\tilde{U}_s^{(c_1^*, c_2^*)})\mathrm{d}s\leq0, \ \ \ a.s.,
	\end{eqnarray}
where the semi-martingale $\tilde{U}^{(c_1^*, c_2^*)}$ is the right-continuous modification of $U^{(c_1^*, c_2^*)}$. The proof of \eqref{28} involves the application of the occupation formula for the semi-martingale local time referenced in Corollary 1, p.219, Protter\cite{Protter2005}.
Specifically,
when $X$ has paths of UBV, this is already established by Lemma 6 of Loeffen\cite{Loeffen2009}.
When $X$ has paths of BV, since it is a quadratic pure jump semi-martingale, as found in Theorem 26, p.71 of Protter\cite{Protter2005}, \eqref{28} automatically holds.

The next step is to show that $V_{(c_1^{*}, c_2^{*})}$ satisfies the HJB inequality \eqref{22}. For $x\geq y> c_2^*$, by \eqref{23}, one has $V_{(c_1^*, c_2^*)}(x)-V_{(c_1^*, c_2^*)}(y)=x-y\geq x-y-\beta \mathbf{1}\{x>y\}$.
For $c_2^*\geq x \geq y$, since $(c_1^*, c_2^*)\in C^*$ minimizes $H(c_1, c_2)$ for $(c_1, c_2)\in dom(H)$, by \eqref{23}, one has
	\begin{align*}
		V_{(c_1^*, c_2^*)}(x)-V_{(c_1^*, c_2^*)}(y)
&
= \frac{(c_2^*-c_1^*-\beta)\big(\vartheta^{(q+m, q)}(x, -l)-\vartheta^{(q+m, q)}(y, -l)\big)}{\vartheta^{(q+m, q)}(c_2^*, -l)-\vartheta^{(q+m, q)}(c_1^*, -l)}\nonumber\\ & \geq \frac{(x-y-\beta\mathbf{1}\{x>y+\beta\})\big(\vartheta^{(q+m, q)}(x, -l)-\vartheta^{(q+m, q)}(y, -l)\big)}{\vartheta^{(q+m, q)}(x, -l)-\vartheta^{(q+m, q)}(y, -l)}\nonumber\\
& = x-y-\beta\mathbf{1}\{x>y+\beta\}\geq x-y-\beta\mathbf{1}\{x>y\}.
	\end{align*}
For $x> c_2^*\geq y$, by using \eqref{23} once again, one gets
	\begin{align*}
		V_{(c_1^*, c_2^*)}(x)-V_{(c_1^*, c_2^*)}(y)
&
=x-c_2^*+ \frac{(c_2^*-c_1^*-\beta)\big(\vartheta^{(q+m, q)}(c_2^*, -l)-\vartheta^{(q+m, q)}(y, -l)\big)}{\vartheta^{(q+m, q)}(c_2^*, -l)-\vartheta^{(q+m, q)}(c_1^*, -l)}.
	\end{align*}
Then, since $H(y, c_2^*)\geq H(c_1^*, c_2^*)$ for $c_2^*>y+\beta$, the above equation satisfies
	\begin{align*}
		&V_{(c_1^*, c_2^*)}(x)-V_{(c_1^*, c_2^*)}(y)\\
&\geq x-c_2^*+ \frac{(c_2^*-y-\beta\mathbf{1}\{c_2^*>y+\beta\})\big(\vartheta^{(q+m, q)}(c_2^*, -l)-\vartheta^{(q+m, q)}(y, -l)\big)}{\vartheta^{(q+m, q)}(c_2^*, -l)-\vartheta^{(q+m, q)}(y, -l)}\nonumber\\
&
=x-y-\beta\mathbf{1}\{c_2^*> y+\beta\}\geq x-y-\beta\mathbf{1}\{x > y+\beta\}\geq x-y-\beta\mathbf{1}\{x> y\}.
	\end{align*}

Thus, $V_{(c_1^{*}, c_2^{*})}$ satisfies both \eqref{21} and \eqref{22}. By Lemma \ref{lem 3.4}, we have $V_{*}=V_{(c_1^{*}, c_2^{*})}$ for a.e. $x\in[-l, \infty)$, and $\pi_{(c_1^*, c_2^*)}$ is the optimal dividend strategy for the impulse control problem \eqref{optpro}.
\end{proof}
The following is the main finding of this section. It establishes a sufficient condition for the optimality of the impulse control $\pi_{(c_1^*, c_2^*)}$, and provides a monotonicity-based criterion for identifying the admissible region of $(c_1^*, c_2^*)$, as demonstrated through numerical results in Section \ref{sec 5}.
\begin{theorem}\label{theorem 4.7}
	Suppose $\vartheta^{(q+m,q)}(\cdot, -l)$ is sufficiently smooth on $[-l, \infty)$, and that there exists ${(c_1^*, c_2^*)}\in C^{*}$ with $c_1^*\geq 0$, $c_2^*\in(c_1^* +\beta, \infty)$ excluding the point $c_2^* =b$ when $X$ is of BV, such that, for all $x, y\in[c_2^*, \infty)$ and $x\geq y$,
	\begin{eqnarray}\label{29}
		&&\vartheta^{(q+m,q)}{'}(x, -l)\geq \vartheta^{(q+m,q)}{'}(y, -l),
	\end{eqnarray}
where $x, y=b$ are excluded in the BV case. Then, $\pi_{(c_1^*, c_2^*)}$ is the optimal strategy for the impulse control problem \eqref{optpro}.
\end{theorem}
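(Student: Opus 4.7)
The plan is to apply the Verification Lemma (Lemma~\ref{lem 3.4}) to the candidate $V:=V_{(c_1^*,c_2^*)}$, whose explicit form is provided by Proposition~\ref{corol 4.3}. The smoothness of $\vartheta^{(q+m,q)}(\cdot,-l)$ transfers to $V$ via this piecewise formula, so it remains to verify the HJB inequalities \eqref{21}--\eqref{22}. Inequality \eqref{22} is handled by the three-case analysis already given in the proof of Lemma~\ref{lem 4.6}, which uses only that $(c_1^*,c_2^*)\in C^*$ minimizes $H$; and \eqref{21} restricted to $[-l,c_2^*]$ follows from the martingale identity for $\{e^{-qt-L^{\pi}(t)}\vartheta^{(q+m,q)}(R_t,-l)\}_{t\ge 0}$, which yields $(\mathcal{A}-q-m\mathbf{1}\{x<0\})\vartheta^{(q+m,q)}(x,-l)=0$ away from the kinks at $\{0,b\}$. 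Neither step invokes the new hypothesis~\eqref{29}.

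The novel content of the theorem is the verification of \eqref{21} on $(c_2^*,\infty)$, where $V$ is affine of unit slope. The plan is to compare $V$ with the ``extrapolated'' auxiliary function $\tilde V(x):=\vartheta^{(q+m,q)}(x,-l)/\vartheta^{(q+m,q)}{'}(c_2^*,-l)$, noting that $\tilde V\equiv V$ on $[-l,c_2^*]$ and $(\mathcal{A}-q-m\mathbf{1}\{x<0\})\tilde V\equiv 0$ on $(-l,\infty)$ off the kinks (by the same martingale identity, applied to an arbitrary upper exit level). Setting $h:=V-\tilde V$, smooth fit gives $h(c_2^*)=h'(c_2^*)=0$ and $h\equiv 0$ on $[-l,c_2^*]$; on $(c_2^*,\infty)$ the hypothesis~\eqref{29} directly yields $h'(x)=1-\vartheta^{(q+m,q)}{'}(x,-l)/\vartheta^{(q+m,q)}{'}(c_2^*,-l)\le 0$ and $h''(x)=-\tilde V''(x)\le 0$, so $h$ is non-positive, non-increasing and concave there. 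The identity $(\mathcal{A}-q)V=(\mathcal{A}-q)h$ on $(c_2^*,\infty)$ then reduces the problem to showing $(\mathcal{A}-q)h\le 0$.

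The main obstacle is this last step. The drift contribution $(\gamma-\delta\mathbf{1}\{x>b\})h'(x)$ is non-positive (using $\gamma\ge\psi'(0+)\ge\delta$ from the positive-security-loading condition, since $\gamma=\psi'(0+)+\int_1^\infty z\,\nu(\mathrm dz)$), and the diffusion contribution $\tfrac{\sigma^2}{2}h''(x)$ is non-positive, so the only positive-signed piece is $-qh(x)\ge 0$, which must be absorbed by the L\'{e}vy jump integral. The plan is to split that integral at $z=x-c_2^*$: on $(0,x-c_2^*]$ the Taylor identity $h(x-z)-h(x)+h'(x)z\mathbf{1}\{0<z<1\}=\int_0^z(z-u)h''(x-u)\,\mathrm du\le 0$ (valid by concavity of $h$) gives a negative integrand, while for $z>x-c_2^*$ one has $h(x-z)=0$ and the surviving contribution is bounded above by $-h(x)+h'(x)z\mathbf{1}\{0<z<1\}$, which can be absorbed by the already-negative pieces together with the strictly negative mass produced by $\tilde V$ being a true solution of $(\mathcal{A}-q)\tilde V=0$. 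To circumvent the finitely many isolated non-smoothness points (at $\{0,b\}$ for BV and possibly $c_2^*$ for UBV), we would work with the integrated HJB~\eqref{28} via the Meyer--It\^{o}/Bouleau--Yor framework invoked in the proof of Lemma~\ref{lem 4.6}, rather than a pointwise bound. Once~\eqref{28} is established on $[-l,\infty)$, Lemma~\ref{lem 3.4} yields $V_{(c_1^*,c_2^*)}\ge V_*$, and the reverse inequality is immediate by admissibility of $\pi_{(c_1^*,c_2^*)}$, completing the proof.
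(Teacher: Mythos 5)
Your outline of the reduction to the Verification Lemma, the treatment of inequality~\eqref{22}, and the martingale argument giving~\eqref{21} on $[-l,c_2^*]$ all match the paper. The divergence is in how one proves $(\mathcal{A}-q)V_{(c_1^*,c_2^*)}\le 0$ on $(c_2^*,\infty)$: the paper follows Loeffen's strategy, comparing $V_{c_2^*}$ with the barrier value function $V_x$ (same numerator $\vartheta^{(q+m,q)}(\cdot,-l)$, but normalized by $\vartheta^{(q+m,q)}{'}(x,-l)$ instead of $\vartheta^{(q+m,q)}{'}(c_2^*,-l)$), establishing $\lim_{y\uparrow x}(\mathcal{A}-q-m\mathbf{1}\{y<0\})(V_{c_2^*}-V_x)(y)\le 0$ from~\eqref{29} and concluding by contradiction against $(\mathcal{A}-q)V_x\equiv 0$ below its barrier. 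You instead use the single extrapolated comparator $\tilde V=\vartheta^{(q+m,q)}(\cdot,-l)/\vartheta^{(q+m,q)}{'}(c_2^*,-l)$ and set $h=V-\tilde V$. This is a genuinely different decomposition, and it is where the argument breaks down.

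The sign analysis of the jump integral for $h$ is incorrect. On $(0,x-c_2^*]$ the Taylor identity $h(x-z)-h(x)+h'(x)z\mathbf{1}\{0<z<1\}=\int_0^z(z-u)h''(x-u)\,\mathrm du$ is valid only when $z<1$; for $z\in[1,x-c_2^*]$ the integrand is $h(x-z)-h(x)$, and since $h$ is non-increasing with $x-z\in[c_2^*,x)$ one has $h(x-z)\ge h(x)$, so this piece is non-negative, not non-positive as claimed. For $z>x-c_2^*$, the surviving $-h(x)\ge 0$ contributes the positive mass $-h(x)\,\nu\bigl((x-c_2^*,\infty)\bigr)$, and the zeroth-order piece $-qh(x)\ge 0$ is also positive. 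Your suggestion that these are "absorbed ... together with the strictly negative mass produced by $\tilde V$ being a true solution" is not an argument: $(\mathcal{A}-q)\tilde V=0$ cancels exactly in $(\mathcal{A}-q)V=(\mathcal{A}-q)h$ and leaves no residual negativity to exploit. Whether the drift, diffusion and small-jump pieces of $(\mathcal{A}-q)h$ dominate these three positive contributions is exactly the content to be proved, and concavity/monotonicity of $h$ alone does not deliver it. This balancing problem is precisely what the Loeffen-style local comparison with $V_x$ at $y\uparrow x$ is designed to bypass, which is why the paper (and Theorem~2 of \cite{Loeffen2008}) takes that route.
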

\begin{proof}
	The validity of the HJB inequality \eqref{22} follows from the same argument used in the proof of Lemma \ref{lem 4.6}. It is therefore sufficient to verify that \eqref{21} holds for a.e. $x\in [-l, \infty)$. Recall that by Proposition \ref{corol 4.3} one has $V_{(c_1^{*}, c_2^{*})}=V_{c_2^*}$. For $-l\leq x<c_2^*$, as derived in the proof of Lemma \ref{lem 4.6}, it follows that
\begin{align}\label{h123}
&(\mathcal{A}-q -m\mathbf{1}\{x<0\})V_{c_2^{*}}(x)=0.
\end{align}
For $x>y, c_2^*$, using ideas from Theorem 2 of Loeffen\cite{Loeffen2008} together with \eqref{29}, we get
\begin{align}\label{k123}
&\lim_{y\uparrow x}(\mathcal{A}-q-m\mathbf{1}\{y<0\})(V_{c_2^*}-V_{x})(y)\leq 0,
 \end{align}
and then, we prove \eqref{21} by contradiction. Specifically, suppose there exist $x>c_2^*$ such that $(\mathcal{A}-q-m\mathbf{1}\{x<0\})V_{c_2^{*}}(x)>0$. Then, by \eqref{k123} and the continuity of $(\mathcal{A}-q-m\mathbf{1}\{x<0\})V_{c_2^*}(x)$, we have $\lim_{y\uparrow x} (\mathcal{A}-q-m\mathbf{1}\{y<0\}) V_{x}(y)>0$, which contradicts \eqref{h123}, and hence \eqref{21} holds for $x>c_2^*$.
\end{proof}

\section{Examples}\label{sec 5}

The existence and satisfiability of the optimal impulse control $\pi_{(c_1^*, c_2^*)}$ are showed in Proposition \ref{prop 4.2}. Combined with Theorem \ref{theorem 4.7}, this result enables the development of a method for numerically computing the optimal thresholds. Compared to the de Finetti's problem discussed in Loeffen\cite{Loeffen2008}, the additional complexity involves characterizing the optimal values of parameters $c_1$ and $c_2$, which forms a two-dimensional optimization problem, in contrast to the one-dimensional problem of finding the optimal barrier strategy. In this section, we analyze two examples for which it is able to present the numerical results of the optimal points $(c_1^*, c_2^*)$, including refracted Brownian risk model and refracted Cram\'{e}r-Lundberg model with exponential claims.

\subsection{Refracted Brownian risk model}
Let $X$ and $Y$ be Brownian risk processes, namely,
\begin{eqnarray*}
	&&X_t-X_0=\mu t+\sigma B_t \ \ \mbox{and} \ \ Y_t-Y_0=(\mu-\delta) t+\sigma B_t,
\end{eqnarray*}
where $\mu>0$ is a premium rate, $\sigma>0$ is a constant diffusion volatility and $\{B_t, t\geq 0\}$ represents standard Brownian motion. In this case, the Laplace exponent of $X$ is given by $\psi(\lambda)=\mu \lambda+(1/2) \sigma^2 \lambda^2$, and the positive safety loading condition is defined as $\mu-\delta\geq 0$. The $q$-scale function of $X$ is expressed as
\begin{eqnarray*}
	&&W^{(q)}(x)=\frac{2}{\sigma^2 \rho}(e^{\rho_2 x}-e^{\rho_1 x}),
\end{eqnarray*}
where $\rho_1=\frac{-\sqrt{\mu^2+2q\sigma^2}-\mu}{\sigma^2}$, $\rho_2=\frac{\sqrt{\mu^2+2q\sigma^2}-\mu}{\sigma^2}$ and $\rho=\rho_2-\rho_1=\frac{2\sqrt{\mu^2+2q\sigma^2}}{\sigma^2}$.
The respective parameters for $W^{(q+m)}$ we will denote using superscript $*$, and for process $Y$, we will use superscript $Y$. Furthermore, since the parameters of $W^{(q)}$ and $\mathbb{W}^{(q)}$ satisfy $\frac{\rho_i}{\rho_i-\rho_2^Y}-\frac{\rho_i}{\rho_i-\rho_1^Y}=-\frac{\sigma^2 \rho^Y}{2 \delta}, i=1,2$, the explicit expression for $w^{(q)}$ is as follows
\begin{eqnarray*}
	&&w^{(q)}(x; -l)=\sum_{i=1}^{2}(-1)^i e^{\rho_i l} \sum_{j=1}^{2} (-1)^{j+1} A_{ij} e^{\rho_j^Y x},\ \ \ \mbox{for}\ x\geq b,
\end{eqnarray*}
where $A_{ij}=\frac{4\delta \rho_i e^{(\rho_i-\rho_j^Y)b}}{\sigma^4 \rho \rho^Y (\rho_i-\rho_j^Y)}$. With the formulas $\frac{1}{\rho_i^*-\rho_2}-\frac{1}{\rho_i^*-\rho_1}=\frac{\sigma^2 \rho}{2 m}$ and $\frac{1}{\rho_i-\rho_2^*}-\frac{1}{\rho_i-\rho_1^*}=-\frac{\sigma^2 \rho^*}{2 m}, i=1,2$, we also get
\begin{eqnarray}\label{30}
	&\vartheta^{(q+m, q)}(x, -l)=\left \{
	\begin{array}{ll}
		\sum_{i=1}^{2}(-1)^i (2/\sigma^2 \rho^*)e^{\rho_i^* (x+l)},  &\mbox{for}\ -l\leq x<0, \\
		\sum_{i=1}^{2}(-1)^i e^{\rho_i^* l} \sum_{j=1}^{2} (-1)^{j} A_{ij}^* e^{\rho_j x},   &\mbox{for}\ 0\leq x<b, \\
		\sum_{i=1}^{2}(-1)^i Q_i(l) \sum_{j=1}^{2} (-1)^{j+1} A_{ij} e^{\rho_j^Y x}, &\mbox{for}\ x\geq b,
	\end{array} \right.
\end{eqnarray}
where $A_{ij}^*=\frac{4m}{\sigma^4 \rho\rho^* (\rho_i^*-\rho_j)}$ and $Q_i(l)=\frac{\rho_i-\rho_1^*}{\rho^*}e^{\rho_2^* l}-\frac{\rho_i-\rho_2^*}{\rho^*}e^{\rho_1^* l}$.

We first present the monotonicity of the first derivative of the Parisian refracted $(q, m)$-scale function.
\begin{proposition}\label{prop 5.1}
For any $q, m>0$, $b\geq0$ and $l>0$, there exist a constant vector $\varepsilon=(\varepsilon_1, \varepsilon_2)$ with $\varepsilon_1\leq 0\leq \varepsilon_2$ such that $\vartheta^{(q+m, q)}{'}(\cdot, -l)$ is decreasing on $(-l, \varepsilon_1)\cup(0, \varepsilon_2)$ and increasing on $(\varepsilon_1, 0)\cup(\varepsilon_2, \infty)$.
\end{proposition}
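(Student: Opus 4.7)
The plan is to study the sign of the second derivative $\vartheta^{(q+m,q)''}(\cdot,-l)$ piece by piece, since its sign governs the monotonicity of $\vartheta^{(q+m,q)'}(\cdot,-l)$. The key structural observation from the explicit form in \eqref{30} is that on each of $[-l,0)$, $[0,b)$, $[b,\infty)$ the function $\vartheta^{(q+m,q)}(\cdot,-l)$ is a linear combination of exactly two exponentials with exponents $r_1<0<r_2$, and in each piece a short algebraic check (using the interlacing $\rho_1^*<\rho_1<\rho_1^Y<0<\rho_2^Y<\rho_2<\rho_2^*$) shows that the coefficient of $e^{r_2 x}$ is strictly positive. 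Consequently, $\vartheta^{(q+m,q)''}(\cdot,-l)$ restricted to each piece has the form $\alpha e^{r_1 x}+\beta e^{r_2 x}$ with $\beta>0$; solving $\alpha e^{r_1 x}+\beta e^{r_2 x}=0$ shows that this function can change sign at most once on the piece, and necessarily from negative to positive.

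For $x\in[-l,0)$, the identity \eqref{2} together with the extension $W^{(q)}(y)=0$ for $y<0$ collapses the defining integral in \eqref{9} to $\vartheta^{(q+m,q)}(x,-l)=W^{(q+m)}(x+l)$. Direct computation then yields $W^{(q+m)''}(z)=\tfrac{2}{\sigma^2\rho^*}\bigl((\rho_2^*)^2e^{\rho_2^* z}-(\rho_1^*)^2e^{\rho_1^* z}\bigr)$; using $\rho_1^*+\rho_2^*=-2\mu/\sigma^2<0$ and $|\rho_1^*|>\rho_2^*>0$, one reads off $W^{(q+m)''}(0)<0$, $W^{(q+m)''}(z)\to+\infty$ as $z\uparrow\infty$, and a unique zero at $z_0^*=\tfrac{2\log(|\rho_1^*|/\rho_2^*)}{\rho_2^*-\rho_1^*}>0$. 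Setting $\varepsilon_1:=(z_0^*-l)\wedge 0$ then produces the required monotonicity on $(-l,0)$.

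For $x\geq 0$, I would first pin down the one-sided limits at the break points by differentiating \eqref{2} twice and using the Brownian boundary values $W^{(q)}(0)=0$, $W^{(q)'}(0^+)=2/\sigma^2$. This yields a strictly negative jump
\[
\vartheta^{(q+m,q)''}(0^+,-l)-\vartheta^{(q+m,q)''}(0^-,-l)=-\tfrac{2m}{\sigma^2}W^{(q+m)}(l)<0,
\]
and, by the analogous computation at $b$ (using $\mathbb{W}^{(q)}(0)=0$ and $\mathbb{W}^{(q)'}(0^+)=2/\sigma^2$), a strictly positive jump at $x=b$ proportional to $\delta\,\vartheta^{(q+m,q)'}(b,-l)/\sigma^2$. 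Since $\vartheta^{(q+m,q)}(x,-l)\to\infty$ as $x\uparrow\infty$ while the piece on $[b,\infty)$ is a combination of $e^{\rho_1^Y x}$ and $e^{\rho_2^Y x}$, the coefficient of $e^{\rho_2^Y x}$ must be positive, so $\vartheta^{(q+m,q)''}(x,-l)\to+\infty$.

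The main obstacle, which the structural observation above neutralises, is to rule out oscillation caused by these two jumps. Because any sign change of $\vartheta^{(q+m,q)''}(\cdot,-l)$ on $[0,b)$ or $[b,\infty)$ must be negative-to-positive, once the function turns positive on $[0,b)$ it stays positive up to $b$, after which the upward jump at $b$ can only keep it positive; likewise on $[b,\infty)$ it transitions at most once from negative to positive and then stays positive. A short case analysis on the signs of $\vartheta^{(q+m,q)''}(0^+,-l)$ and $\vartheta^{(q+m,q)''}(b^\pm,-l)$ then shows that $\vartheta^{(q+m,q)''}(\cdot,-l)$ admits at most one sign change on $(0,\infty)$ (or none, in which case we set $\varepsilon_2=0$), always from negative to positive; the crossing location is the required $\varepsilon_2\geq 0$.
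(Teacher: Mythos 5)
Your plan is genuinely different from the paper's and, in one key place, cleaner. The paper computes the second derivative on each of the three pieces, locates its zero explicitly ($\zeta_1,\zeta_2,\zeta_3$), and then asserts with minimal justification that $\zeta_3\leq\zeta_2$ in order to glue the pieces together. You instead establish the gluing through the signs of the jumps of $\vartheta^{(q+m,q)''}(\cdot,-l)$ at $x=0$ (strictly negative, $-2mW^{(q+m)}(l)/\sigma^2$) and at $x=b$ (strictly positive, proportional to $\vartheta^{(q+m,q)'}(b,-l)>0$), combined with the observation that on each piece the second derivative is of the form $\alpha e^{r_1x}+\beta e^{r_2x}$ with $\beta>0$ and hence changes sign at most once, from negative to positive. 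The positive jump at $b$ then directly rules out any positive-to-negative transition on $(0,\infty)$, avoiding the vague ``$\zeta_3\le\zeta_2$'' step entirely. This is a sound and arguably more transparent way to close the argument.

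However, your stated interlacing $\rho_1^*<\rho_1<\rho_1^Y<0<\rho_2^Y<\rho_2<\rho_2^*$ is wrong on the positive side: since $Y_t=X_t-\delta t$ has strictly smaller Laplace exponent than $X$ on $(0,\infty)$, one in fact has $\rho_2<\rho_2^Y$ (equivalently $\varphi(q)>\phi(q)$), not $\rho_2^Y<\rho_2$; moreover $\rho_2^Y$ versus $\rho_2^*$ has no fixed ordering (it depends on $\delta$ versus $m$). This matters because the sign check you invoke for the coefficient of $e^{\rho_2^Y x}$ on $[b,\infty)$ --- namely $K_2=(\rho_2^Y)^2\bigl(A_{12}Q_1(l)-A_{22}Q_2(l)\bigr)>0$ --- relies on $A_{22}<0$, which comes precisely from $\rho_2-\rho_2^Y<0$. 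With your claimed $\rho_2^Y<\rho_2$ you would get $A_{22}>0$ and the sign of $K_2$ would be indeterminate. The conclusion you want (positivity of $\beta$ on every piece) is still true, but the route you sketch to it does not go through; you should correct the interlacing to $\rho_1^*<\rho_1<\rho_1^Y<0<\rho_2<\rho_2^Y$ (and $\rho_2<\rho_2^*$, with $\rho_2^Y$ vs.\ $\rho_2^*$ unordered) and redo the sign check accordingly. With that fix, and with the promised case analysis on the signs at $0^+$ and $b^\pm$ actually written out, the proof is complete and is a nice streamlining of the published argument.
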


\begin{proof}
	To demonstrate this, we consider the second derivative of $\vartheta^{(q+m, q)}(x, -l)$ with respect to $x$. Based on the form of $\vartheta^{(q+m, q)}(x, -l)$ as given by \eqref{30}, we separately analyze the cases where $-l< x<0$, $0< x<b$ and $x> b$. For $-l< x<0$, one has
$$\vartheta^{(q+m,q)}{''}(x, -l)=W^{(q+m)}{''}(x+l)=(2/ \sigma^2 \rho^*)({\rho_2^*}^2 e^{\rho_2^* (x+l)}-{\rho_1^*}^2 e^{\rho_1^* (x+l)}).$$
Since $W^{(q+m)}{''}(x)$ is clearly increasing and bounded in $x\in(-l, 0)$, and the solution to the equation $W^{(q+m)}{''}(x+l)=0$ is denoted by $\zeta_1:=(1/\rho^*) \ln{({\rho_1^*}^2 / {\rho_2^*}^2 )}-l$, we have $\zeta_1>-l$, and $\vartheta^{(q+m,q)}{'}(\cdot, -l)$ is decreasing on $(-l, \varepsilon_1)$ and increasing on $(\varepsilon_1, 0)$, where $\varepsilon_1:=\zeta_1\wedge 0$.

For $0< x<b$, by \eqref{30}, one has
	\begin{align*}
\vartheta^{(q+m,q)}{''}(x, -l)
&={\rho_2}^2(A_{22}^* e^{\rho_2^* l}-A_{12}^* e^{\rho_1^* l})e^{\rho_2 x}-{\rho_1}^2(A_{21}^* e^{\rho_2^* l}-A_{11}^* e^{\rho_1^* l})e^{\rho_1 x}\\
& =K_2^* e^{\rho_2 x}-K_1^* e^{\rho_1 x},
	\end{align*}
where $K_i^*:={\rho_i}^2(A_{2i}^* e^{\rho_2^* l}-A_{1i}^* e^{\rho_1^* l}), i=1,2$. From $\rho_1, \rho_1^*<0$, $\rho_2, \rho_2^*>0$ and $\rho_1^*<\rho_1<\rho_2<\rho_2^*$ it is easy to see that the constants $K_i^*$ are strictly positive, and then $\vartheta^{(q+m,q)}{''}(\cdot, -l)$ is increasing and bounded in $x\in(0, b)$. The solution to the above equation that equals zero is given by $\zeta_2:=(1/\rho) \log{(K_1^*/ K_2^*)}$. Then, $\vartheta^{(q+m,q)}{'}(\cdot, -l)$ is decreasing on $(0, (\zeta_2\vee0) \wedge b)$ and increasing on $((\zeta_2\vee0)\wedge b, b)$.

For $x> b$, we have
\begin{align*}
\vartheta^{(q+m, q)}{''}(x, -l)
&={\rho_2^Y}^2\big(A_{12} Q_1(l)-A_{22} Q_2(l)\big)e^{\rho_2^Y x}-{\rho_1^Y}^2\big(A_{11} Q_1(l)-A_{21} Q_2(l)\big)e^{\rho_1^Y x}\\
&=K_2 e^{\rho_2^Y x}-K_1 e^{\rho_1^Y x},
\end{align*}
where $K_i:={\rho_i^Y}^2\big(A_{1i} Q_1(l)-A_{2i} Q_2(l)\big), i=1,2$.
	From $\rho_i<\rho_i^Y, (i=1,2)$ it follows that the constant $K_2$ is strictly positive. If $K_1>0$, then $\vartheta^{(q+m, q)}{''}(x, -l)$ is increasing and unbounded in $x>b$. The solution to the above equation that equals zero is given by $\zeta_3:=(1/\rho^Y) \log{(K_1/ K_2)}$. With comparison and analysis, it is proved that $\zeta_3\leq\zeta_2$, and thereby, for $\varepsilon_2:=(\zeta_2\vee0)\wedge (\zeta_3\vee b)$, the function $\vartheta^{(q+m,q)}{'}(\cdot, -l)$ is decreasing on $(0, \varepsilon_2)$ and increasing on $(\varepsilon_2, \infty)$. Otherwise, if $K_1<0$, then $\vartheta^{(q+m, q)}{''}(x, -l)$ is positive for all $x> b$, and hence $\vartheta^{(q+m, q)}{'}(x, -l)$ is increasing on $(b, \infty)$, and $\varepsilon_2$ is redefined as $\varepsilon_2:=(\zeta_2\vee 0)\wedge b$.
\end{proof}

From Theorem \ref{theorem 4.7} and Proposition \ref{prop 5.1}, we have $c_2^*\in[\varepsilon_2, \infty)$, and with Proposition \ref{prop 4.2}, $c_1^*\in[0, \varepsilon_2]$. Accordingly, the two-dimensional optimization problem to determine the values of $c_1^*$ and $c_2^*$ can be addressed by solving two auxiliary one-dimensional optimization problems. The numerical procedure is as follows.

\begin{remark}
For given model parameters, we first pre-compute $\vartheta^{(q+m,q)}(x,-l)$ and its derivative on a fine uniform grid. Proposition~\ref{prop 5.1} yields constants $\varepsilon_1\le 0\le \varepsilon_2$ at which the monotonicity of $\vartheta^{(q+m,q)\prime}(\cdot,-l)$ changes. By Theorem \ref{theorem 4.7} and Proposition \ref{prop 5.1} we have $c_2^*\in[\varepsilon_2, \infty)$, and Proposition~\ref{prop 4.2} guarantees that, for each fixed $c_2>\varepsilon_2$, the function $H(\cdot, c_2)$ admits a unique minimiser $c_1^*(c_2)$ on $[0, \varepsilon_2)$, obtained as the unique intersection of $\vartheta^{(q+m, q)}{'}(\cdot, -l)$ and $H(\cdot, c_2)$ on this interval.
Finally, $c_2^*$ is determined as the unique solution of $\vartheta^{(q+m, q)}{'}(c_2, -l)=\vartheta^{(q+m, q)}{'}(c_1^*(c_2), -l)$ on $[\varepsilon_2, \bar{c}_2]$ for a sufficiently large upper bound $\bar{c}_2$, solved numerically as a one-dimensional root-finding problem.

Alternatively, restricting $c_1\in[0,\varepsilon_2]$ and $c_2\in[\varepsilon_2,\infty)$, the optimal pair $(c_1^*,c_2^*)$ can also be identified as an intersection of the level sets $\vartheta^{(q+m,q)\prime}(c_1,-l)=H(c_1,c_2)$ and $\vartheta^{(q+m,q)\prime}(c_2,-l)=\vartheta^{(q+m,q)\prime}(c_1,-l)$, located numerically via contour plots. If no intersection arises in this region, the optimal pair is selected among the boundary configurations in cases (ii)-(v) of Proposition \ref{prop 4.2}, by comparing the associated values of $H$.
\end{remark}

Based on this, we can directly derive the following result, for more details, see also Section 4 in Loeffen\cite{Loeffen2009}.

\begin{theorem}\label{theorem 5.2}
	For the refracted Brownian risk model, there is a unique $\pi_{(c_1^*, c_2^*)}$ policy which is optimal for the impulse control problem \eqref{optpro}.
\end{theorem}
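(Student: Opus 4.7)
The plan is to combine the necessary conditions in Proposition~\ref{prop 4.2}, the monotonicity pattern of $\vartheta^{(q+m,q)}{'}(\cdot,-l)$ in Proposition~\ref{prop 5.1}, and the sufficient condition \eqref{29} in Theorem~\ref{theorem 4.7}, reducing the two-dimensional minimization of $H$ to nested one-dimensional problems on $[0,\varepsilon_2]$ and $[\varepsilon_2,\infty)$. The hypothesis $W^{(q)}\in\mathbb{C}^1((0,\infty))$ required by Proposition~\ref{prop 4.2} is immediate for Brownian motion, so $C^{*}$ is non-empty and any $(c_1^{*},c_2^{*})\in C^{*}$ falls into one of the configurations (i)--(v) listed there.

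First I would localize any element of $C^{*}$ by showing $c_2^{*}\geq\varepsilon_2$. If instead $c_2^{*}\in(0,\varepsilon_2)$, then by Proposition~\ref{prop 5.1} the map $\vartheta^{(q+m,q)}{'}(\cdot,-l)$ is strictly decreasing on $[c_1^{*},c_2^{*}]$, and the necessary identity $\vartheta^{(q+m,q)}{'}(c_1^{*},-l)=\vartheta^{(q+m,q)}{'}(c_2^{*},-l)$ from Proposition~\ref{prop 4.2}(i) forces $c_1^{*}=c_2^{*}$, contradicting $c_2^{*}>c_1^{*}+\beta$; the boundary cases (ii)--(v) are ruled out analogously by inspecting the one-sided first-order conditions against Proposition~\ref{prop 5.1}. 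With $c_2^{*}\geq\varepsilon_2$ established, Proposition~\ref{prop 5.1} supplies the monotonicity hypothesis \eqref{29}, so Theorem~\ref{theorem 4.7} yields the optimality of $\pi_{(c_1^{*},c_2^{*})}$; the same matching identity combined with the strict monotonicity of $\vartheta^{(q+m,q)}{'}(\cdot,-l)$ on each of $(0,\varepsilon_2)$ and $(\varepsilon_2,\infty)$ then locates $c_1^{*}\in[0,\varepsilon_2]$.

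For uniqueness I would show that, for each fixed $c_2\geq\varepsilon_2$, the critical equation $H(c_1,c_2)=\vartheta^{(q+m,q)}{'}(c_1,-l)$ has a unique solution $c_1^{\dagger}(c_2)\in[0,\varepsilon_2]$. Writing $F(c_1):=H(c_1,c_2)-\vartheta^{(q+m,q)}{'}(c_1,-l)$, a direct computation gives $\partial_{c_1}H\propto H(c_1,c_2)-\vartheta^{(q+m,q)}{'}(c_1,-l)$, so at any zero $c_1^{0}$ of $F$ one has $F'(c_1^{0})=-\vartheta^{(q+m,q)}{''}(c_1^{0},-l)>0$ on $(0,\varepsilon_2)$ by Proposition~\ref{prop 5.1}, ruling out multiple crossings. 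Substituting $c_1^{\dagger}(c_2)$ back, the outer matching condition becomes $G(c_2):=\vartheta^{(q+m,q)}{'}(c_2,-l)-H(c_1^{\dagger}(c_2),c_2)=0$; applying the envelope identity $\partial_{c_1}H=0$ at $c_1^{\dagger}(c_2)$, one computes at any zero of $G$ in $(\varepsilon_2,\infty)$ that $G'(c_2)=\vartheta^{(q+m,q)}{''}(c_2,-l)>0$, so $G$ crosses zero transversally and has a unique root in $[\varepsilon_2,\infty)$, yielding the unique optimal pair $(c_1^{*},c_2^{*})$.

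The main obstacle I anticipate is the careful bookkeeping of the boundary configurations in Proposition~\ref{prop 4.2}(ii)--(v): in the refracted Brownian (UBV) case $\vartheta^{(q+m,q)}(\cdot,-l)$ is globally $C^{1}$ by Proposition~\ref{prop 4.5}, so these configurations either collapse into the interior analysis or can be explicitly discarded using the exponential form \eqref{30} and the parameter inequalities $\rho_{1}^{*}<\rho_{1}<\rho_{2}<\rho_{2}^{*}$ established in the proof of Proposition~\ref{prop 5.1}. The envelope computation for $G'$ crucially leans on $F'>0$ at each critical point, which is precisely where the sign information provided by Proposition~\ref{prop 5.1} is used decisively.
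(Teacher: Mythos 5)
Your approach mirrors what the paper does: it reduces the two-dimensional minimization of $H$ to nested one-dimensional problems using the monotonicity of $\vartheta^{(q+m,q)\prime}(\cdot,-l)$ from Proposition~\ref{prop 5.1}, applies Theorem~\ref{theorem 4.7} for optimality, and uses the necessary conditions from Proposition~\ref{prop 4.2}. The paper itself gives no formal proof of Theorem~\ref{theorem 5.2}, only the paragraph preceding it and a pointer to Loeffen (2009), Section 4; your envelope computation ($F'(c_1^0)=-\vartheta^{(q+m,q)\prime\prime}(c_1^0,-l)>0$ on $(0,\varepsilon_2)$, $G'(c_2^0)=\vartheta^{(q+m,q)\prime\prime}(c_2^0,-l)>0$ on $(\varepsilon_2,\infty)$) is a reasonable formalization of that sketch.

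There is, however, a genuine gap in the envelope step that you should address. You apply the first-order identity $\partial_{c_1}H(c_1^{\dagger}(c_2),c_2)=0$ as if $c_1^{\dagger}(c_2)$ were always an interior critical point on $(0,\varepsilon_2)$, but the paper's own Figure~\ref{fig:3} shows parameter regimes (large $\beta$ or $l$) where the optimal $c_1^{*}=0$, i.e.\ the inner minimizer sits at the boundary, in which case the critical equation $H(c_1,c_2)=\vartheta^{(q+m,q)\prime}(c_1,-l)$ need not have a root in $[0,\varepsilon_2]$, your claim of a ``unique solution'' fails, and $\partial_{c_1}H(0,c_2)\geq 0$ rather than $=0$. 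The envelope computation for $G'$ survives this case (because $c_1^{\dagger}(c_2)\equiv 0$ locally implies $dc_1^{\dagger}/dc_2=0$, and $\partial_{c_2}H$ still vanishes at a zero of $G$), but this must be stated explicitly rather than left implicit. Similarly, you say the boundary configurations (ii)--(v) of Proposition~\ref{prop 4.2} are ``ruled out analogously'' or ``collapse into the interior analysis,'' but that is not quite right: case (iii) with $c_1^{*}=0$ is not ruled out --- it genuinely occurs --- and must instead be absorbed into the argument. The localization $c_2^{*}\geq\varepsilon_2$ for case (iii) does hold, but by a separate integral-comparison argument: if $c_2^{*}<\varepsilon_2$, then strict decrease of $\vartheta^{(q+m,q)\prime}$ on $(0,\varepsilon_2)$ gives $\vartheta^{(q+m,q)}(c_2^{*},-l)-\vartheta^{(q+m,q)}(0,-l)>c_2^{*}\,\vartheta^{(q+m,q)\prime}(c_2^{*},-l)$, hence $H(0,c_2^{*})>\vartheta^{(q+m,q)\prime}(c_2^{*},-l)$, contradicting the one-sided condition of case (iii). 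You need to spell out these boundary-case computations rather than treat them as formal analogues of the interior one.
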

What follows is a numerical analysis of the refracted Brownian risk model. We fix the linear drift at $\mu=0.5$, the diffusion volatility at $\sigma=0.75$, the discount factor at $q=0.05$ and the transaction cost at $\beta=1$. Other parameters are set to $m=0.05$, $\delta=0.03$, $b=3$ and $l=6$ unless specified otherwise in the figures.

Figs.\ref{fig:1} and \ref{fig:2} depict the parameter sensitivity of the function $\vartheta^{(q+m,q)}(x, -l)$ and its first derivative $\vartheta^{(q+m,q)}{'}(x, -l)$, respectively, focusing on variables including the Parisian exponential delay $m$, refraction parameter $\delta$, threshold level $b$ and ultimate bankrupt barrier $-l$. Fig.\ref{fig:3} shows the numerical results of the optimal points $(c_1^*, c_2^*)$ with respect to four varying parameters: $\beta\in[0,2]$, $\delta\in[0,0.2]$, $b\in[0,6]$ and $l\in[0,6]$, where the pair $(c_1^*, c_2^*)$ is drawn in the same color in each subgraph.

\begin{figure}[htbp]
	\centering
	\includegraphics[scale=0.65]{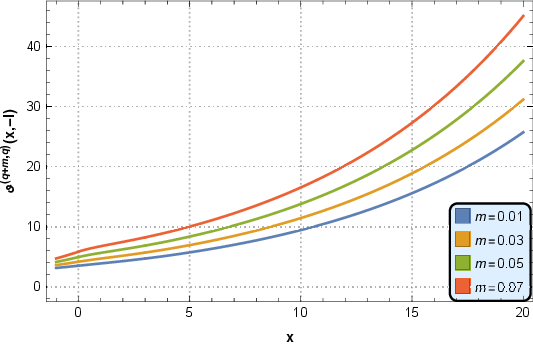}\ \ \ \ \ \ \ \ \ \ \ \ \ \
	\includegraphics[scale=0.65]{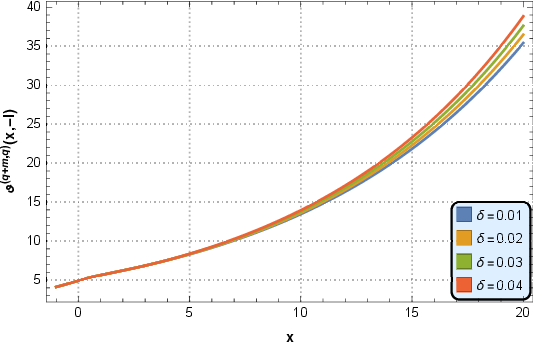}\\
	\includegraphics[scale=0.65]{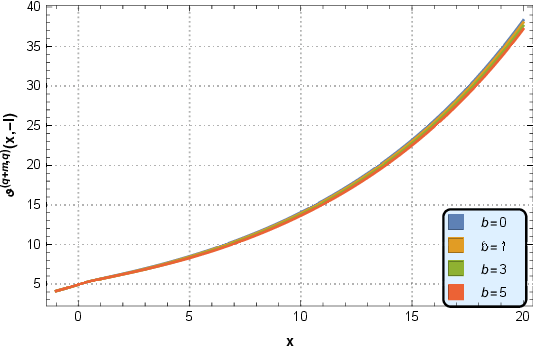}\ \ \ \ \ \ \ \ \ \ \ \ \ \
	\includegraphics[scale=0.65]{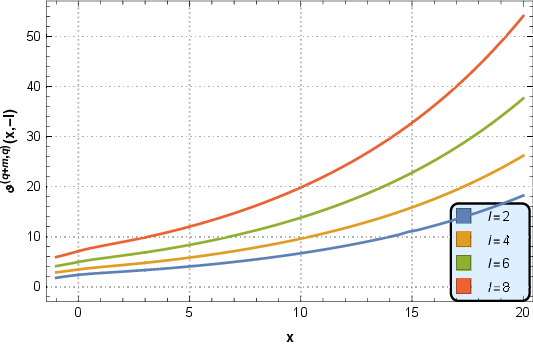}\\
	\caption{Impact of parameters on $\vartheta^{(q+m, q)}(x, -l)$ in a refracted Brownian risk model.}
	\label{fig:1}
\end{figure}
\begin{figure}[htbp]
	\centering
	\includegraphics[scale=0.65]{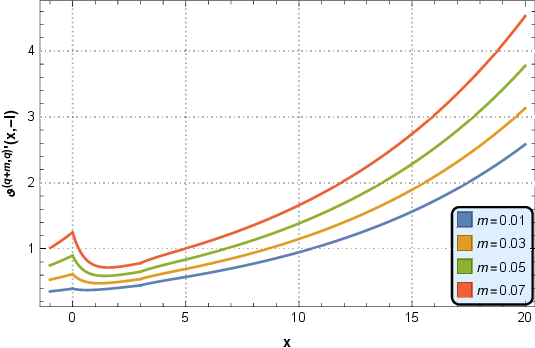}\ \ \ \ \ \ \ \ \ \ \ \ \ \
	\includegraphics[scale=0.65]{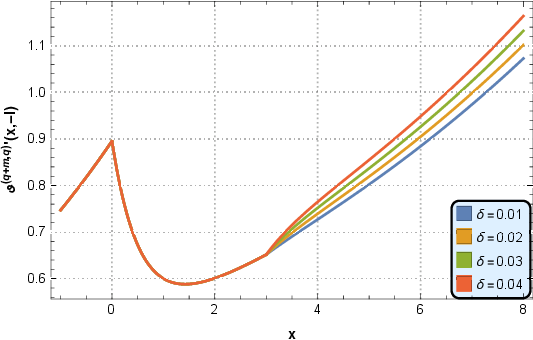}\\
	\includegraphics[scale=0.65]{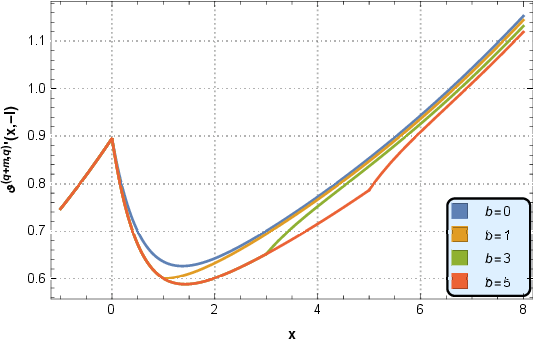}\ \ \ \ \ \ \ \ \ \ \ \ \ \
	\includegraphics[scale=0.65]{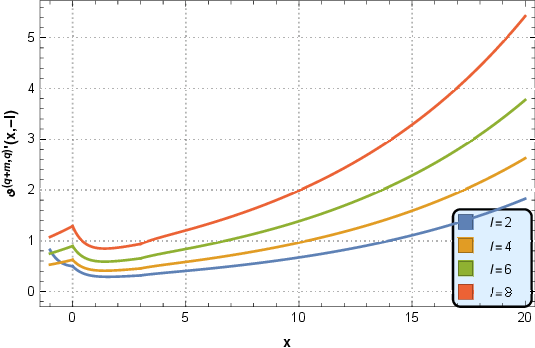}\\
	\caption{Impact of parameters on $\vartheta^{(q+m, q)}{'}(x, -l)$ in a refracted Brownian risk model.}
	\label{fig:2}
\end{figure}

As shown in Fig.\ref{fig:1}, the higher  the initial capital of the insurance company, the higher the value of $\vartheta^{(q+m,q)}(x, -l)$. Analysis of various parameters reveals that larger values of $m$ or $l$ correspondingly increase the value of $\vartheta^{(q+m,q)}(x, -l)$.
Furthermore, for $x\geq b$, $\vartheta^{(q+m,q)}(x, -l)$ increases as $\delta$ increases and decreases as $b$ increases.
Fig.\ref{fig:2} illustrates that, as expected, the derivative $\vartheta^{(q+m, q)}{'}(x, -l)$ decreases in $x\in(0, \varepsilon_2)$ and increases in $x\in(\varepsilon_2, \infty)$. The curve $\vartheta^{(q+m,q)}{'}(x, -l)$ is smooth and exhibits a similar variation tendency to $\vartheta^{(q+m, q)}(x, -l)$ across different values of the parameters $m$, $\delta$, $b$ and $l$.

\begin{figure}[htbp]
	\centering
	\includegraphics[scale=0.65]{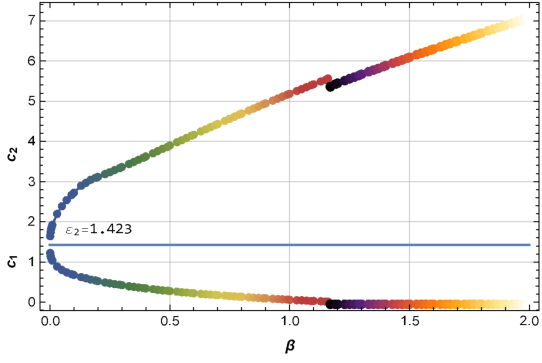}\ \ \ \ \ \ \ \ \ \ \ \ \ \
	\includegraphics[scale=0.65]{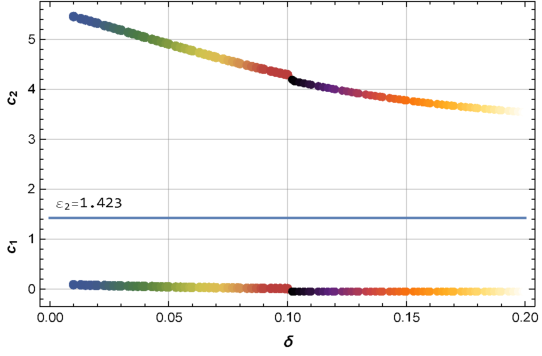}\\
	\includegraphics[scale=0.65]{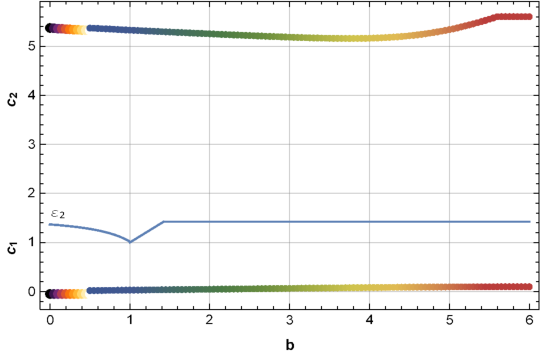}\ \ \ \ \ \ \ \ \ \ \ \ \ \
	\includegraphics[scale=0.65]{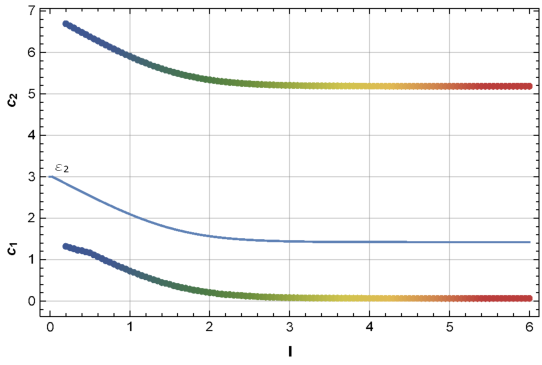}\\
	\caption{Impact of parameters on the optimal pair $(c_1^*, c_2^*)$ in a refracted Brownian risk model.}
	\label{fig:3}
\end{figure}

From Fig.\ref{fig:3}, it is observed that $c_2^*$ is normally above $\varepsilon_2$, and $c_1^*$ is below this level, which is consistent with the theoretical results. The first subgraph illustrates that an increase in $\beta$ leads to a greater distance between $c_1^*$ and $c_2^*$. When $\beta$ is sufficiently large, we have $c_1^*$ approaches zero while $c_2^*$ is sufficiently large, indicating that under excessively high transaction costs, the optimal strategy is to pay as much as possible, and $c_2^*$ needs to be large enough to profit from dividends. The second subgraph shows that increasing $\delta$ causes both $c_1^*$ and $c_2^*$ to decrease. This reflects that a higher $\delta$ reduces the overall drift of the risk process, making it challenging to achieve higher surplus values. The third subgraph indicates that as $b$ increases within the range $[0,6]$, $c_2^*$ initially decreases, then increases, and eventually stabilizes, while $c_1^*$ exhibits a trend of initial decrease followed by an increase. The fourth subgraph reveals that higher values of $l$ result in lower levels of both $c_1^*$ and $c_2^*$.This is mainly due to the fact that lowering the ultimate bankruptcy threshold significantly enhances the operational security of the insurance company in terms of the risk of ruin, thereby allowing for lower levels to achieve higher dividend payments.

\subsection{Refracted Cram\'{e}r-Lundberg model}
Let $X$ and $Y$ be the Cram\'{e}r-Lundberg processes with exponential claims, namely,
\begin{eqnarray*}
	&&X_t-X_0=\mu t-S_t \ \ and \ \ Y_t-Y_0=(\mu-\delta) t-S_t,
\end{eqnarray*}
where $\mu>0$ is a premium rate, $\{S_t=\sum_{i=1}^{N_t} s_i,t\geq0\}$ is a compound Poisson process, $\{N_t,t\geq0\}$ is a homogeneous Poisson process with intensity $\eta>0$, and $\{s_{i},i\geq1\}$ is a sequence of positive independent and exponentially distributed random variables with parameter $\alpha> 0$, $\{s_{i},i\geq1\}$ and $\{N_t,t\geq0\}$ are mutually independent. In this case, the Laplace exponent of $X$ is given by $\psi(\lambda)=\mu \lambda+\eta(\alpha/(\lambda+\alpha)-1)$ for $\lambda>-\alpha$, and the net profit condition is defined as $E[Y_1]=\mu-\delta-\eta/\alpha\geq 0$. The $q$-scale function of $X$ is expressed as
\begin{eqnarray*}
	&&W^{(q)}(x)=\frac{1}{\mu}(G_2 e^{r_2 x}-G_1 e^{r_1 x}),
\end{eqnarray*}
where $r_2=\frac{\sqrt{(\mu \alpha-q-\eta)^2+4\mu q \alpha}-\mu\alpha+q+\eta}{2\mu}$, $r_1=\frac{-\sqrt{(\mu \alpha-q-\eta)^2+4\mu q \alpha}-\mu\alpha+q+\eta}{2\mu}$, $G_i=\frac{\alpha+r_i}{r},\ (i=1,2)$ and $r=r_2-r_1=\frac{\sqrt{(\mu \alpha-q-\eta)^2+4\mu q \alpha}}{\mu}$. Similar to the one above, superscripts $*$ and $Y$ are for $W^{(q+m)}$ and $\mathbb{W}^{(q)}$ respectively.
For $i\in\{1,2\}$ we write $\{1,2\}\backslash i$ for the index in $\{1,2\}$ different from $i$.
From $\frac{r_i}{r_i-r_2^Y}=-\frac{\mu-\delta}{\delta}\frac{r_i-r_1^Y}{r_i+\alpha}$ and $\frac{G_2^Y r_i}{r_i-r_2^Y}-\frac{G_1^Y r_i}{r_i-r_1^Y}=-\frac{\mu-\delta}{\delta},\ (i=1,2)$ it follows that, for $x\geq b$,
\begin{eqnarray*}
	&&w^{(q)}(x; -l)=\frac{1}{\mu r}\sum_{i=1}^{2}(-1)^{i+1} G_i^Y \phi_i(l) e^{r_i^Y x},
\end{eqnarray*}
where $\phi_i(l)=\sum_{j=1}^{2}(-1)^{j+1} (r_j-r_{\{1,2\}\backslash i}^Y) e^{(r_j-r_i^Y)b+r_j l}$. With the formulas $\frac{1}{r_i^*-r_2}=\frac{\mu}{m}\frac{r_i^*-r_1}{r_i^*+\alpha}$, $\frac{G_2}{r_i^*-r_2}-\frac{G_1}{r_i^*-r_1}=\frac{\mu}{m}$ and $\frac{G_2^*}{r_2^*-r_i}-\frac{G_1^*}{r_1^*-r_i}=\frac{\mu}{m},\ (i=1,2)$, we obtain
\begin{eqnarray}\label{31}
	&\vartheta^{(q+m, q)}(x, -l)=\left \{
	\begin{array}{ll}
		(1/\mu)\sum_{i=1}^{2}(-1)^i G_i^*e^{r_i^* (x+l)},    &\mbox{for} \ -l\leq x<0, \\
		(1/\mu r^*)\sum_{i=1}^{2}(-1)^{i+1}G_i \phi_i^*(l)e^{r_i x},    &\mbox{for} \ 0\leq x<b, \\
		(1/\mu r r^* )\sum_{i=1}^{2}(-1)^{i+1}G_i^Y \phi_i^Y(l)e^{r_i^Y x},     &\mbox{for} \ x\geq b,
	\end{array} \right.
\end{eqnarray}
where
\begin{eqnarray*}
&&\phi_i^*(l)=\sum_{j=1}^{2}(-1)^{j+1}(r_j^*-r_{\{1,2\}\backslash i})e^{r_j^* l},\\ &&\phi_i^Y(l)=\sum_{j=1}^{2}(-1)^{j}e^{r_j^*l}\sum_{k=1}^{2}(-1)^{k+1}(r_k-r_{\{1,2\} \backslash i}^Y)(r_j^*-r_{\{1,2\} \backslash k})e^{(r_k-r_i^Y)b}.
\end{eqnarray*}

Initially, we present a proposition that characterizes the monotonicity of the derivative $\vartheta^{(q+m, q)}{'}(\cdot, -l)$. Then, in conjunction with Theorem \ref{theorem 4.7}, we propose a theorem that confirms the uniqueness of the optimal impulse policy $\pi_{(c_1^*, c_2^*)}$.

\begin{proposition}\label{prop 5.3}
For any $q, m>0$, $b\geq0$ and $l>0$, there exist a constant vector $\tilde{\varepsilon}=(\tilde{\varepsilon}_1, \tilde{\varepsilon}_2)$ with $\tilde{\varepsilon}_1\leq 0\leq \tilde{\varepsilon}_2$ such that $\vartheta^{(q+m, q)}{'}(\cdot, -l)$ is decreasing on $(-l, \tilde{\varepsilon}_1)\cup(0, \tilde{\varepsilon}_2) \setminus \{b\}$ and increasing on $(\tilde{\varepsilon}_1, 0)\cup(\tilde{\varepsilon}_2, \infty)\setminus \{b\}$.
\end{proposition}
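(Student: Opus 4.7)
The plan is to mirror the argument of Proposition~\ref{prop 5.1} for the Brownian case, exploiting the fully explicit piecewise representation of $\vartheta^{(q+m,q)}(\cdot,-l)$ given in \eqref{31}. Since the refracted Cram\'er-Lundberg process is of BV, Proposition~\ref{prop 2.1} tells us that $w^{(q)}(\cdot;-l)$ (and hence $\vartheta^{(q+m,q)}(\cdot,-l)$) is only $\mathbb C^{1}$ off $\{0,b\}$, which is exactly why the statement excludes the point $b$. On each of the three intervals $(-l,0)$, $(0,b)$, $(b,\infty)$ the function is a linear combination of two real exponentials, so $\vartheta^{(q+m,q){''}}(\cdot,-l)$ has the form $A\,e^{\alpha x}-B\,e^{\beta x}$ with $\alpha>\beta$; such a function has at most one zero and is monotone in the sign of $\alpha-\beta$ weighted by $A,B$, so the monotonicity of $\vartheta^{(q+m,q){'}}(\cdot,-l)$ on each subinterval will be determined by a single sign check on the coefficients.

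First, on $(-l,0)$ I would differentiate the top line of \eqref{31} twice to obtain
\begin{equation*}
\vartheta^{(q+m,q){''}}(x,-l)=\tfrac{1}{\mu}\sum_{i=1}^{2}(-1)^{i}G_{i}^{*}(r_{i}^{*})^{2}e^{r_{i}^{*}(x+l)},
\end{equation*}
which, using $r_{1}^{*}<0<r_{2}^{*}$ and $G_{i}^{*}>0$, is strictly increasing and unbounded; its unique root $\tilde\zeta_{1}=(1/r^{*})\log\bigl(G_{1}^{*}(r_{1}^{*})^{2}/(G_{2}^{*}(r_{2}^{*})^{2})\bigr)-l$ may or may not lie in $(-l,0)$, so I set $\tilde\varepsilon_{1}:=\tilde\zeta_{1}\wedge 0$, yielding the claimed monotonicity on $(-l,\tilde\varepsilon_{1})$ and $(\tilde\varepsilon_{1},0)$. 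Second, on $(0,b)$ I would differentiate the middle line of \eqref{31} twice to get
\begin{equation*}
\vartheta^{(q+m,q){''}}(x,-l)=\tilde K_{2}^{*}e^{r_{2}x}-\tilde K_{1}^{*}e^{r_{1}x},\qquad \tilde K_{i}^{*}:=\tfrac{(-1)^{i+1}}{\mu r^{*}}G_{i}\phi_{i}^{*}(l)r_{i}^{2},
\end{equation*}
and then show $\tilde K_{1}^{*},\tilde K_{2}^{*}>0$. Positivity is the first real sign check: rewriting $\phi_{i}^{*}(l)=(r_{i}^{*}-r_{\{1,2\}\setminus i})e^{r_{i}^{*}l}-(\ldots)$ and using the ordering $r_{1}^{*}<r_{1}<0<r_{2}<r_{2}^{*}$ coming from $\psi(\lambda)-q$ vs.\ $\psi(\lambda)-(q+m)$, each $\phi_{i}^{*}(l)$ has a determinable sign, which combined with $G_{i}=(\alpha+r_{i})/r>0$ gives $\tilde K_{i}^{*}>0$. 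Then the unique possible zero is $\tilde\zeta_{2}:=(1/r)\log(\tilde K_{1}^{*}/\tilde K_{2}^{*})$ and $\vartheta^{(q+m,q){'}}(\cdot,-l)$ is decreasing on $(0,(\tilde\zeta_{2}\vee 0)\wedge b)$ and increasing on $((\tilde\zeta_{2}\vee 0)\wedge b,b)$.

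Third, on $(b,\infty)$ I would differentiate the bottom line of \eqref{31} to obtain
\begin{equation*}
\vartheta^{(q+m,q){''}}(x,-l)=\tilde K_{2}e^{r_{2}^{Y}x}-\tilde K_{1}e^{r_{1}^{Y}x},\qquad \tilde K_{i}:=\tfrac{(-1)^{i+1}}{\mu r r^{*}}G_{i}^{Y}\phi_{i}^{Y}(l)(r_{i}^{Y})^{2}.
\end{equation*}
Using $r_{i}<r_{i}^{Y}$ together with the explicit form of $\phi_{i}^{Y}(l)$ (a double sum telescoping into two exponentials of the type handled in the previous step), the same sign-tracking argument as in Proposition~\ref{prop 5.1} shows $\tilde K_{2}>0$; if $\tilde K_{1}>0$ the unique root is $\tilde\zeta_{3}:=(1/r^{Y})\log(\tilde K_{1}/\tilde K_{2})$ and one shows $\tilde\zeta_{3}\le\tilde\zeta_{2}$ by a direct comparison of exponents (the argument is the same algebraic manipulation used at the end of the proof of Proposition~\ref{prop 5.1}), so setting $\tilde\varepsilon_{2}:=(\tilde\zeta_{2}\vee 0)\wedge(\tilde\zeta_{3}\vee b)$ delivers the claimed monotonicity globally on $(0,\infty)\setminus\{b\}$; if $\tilde K_{1}\le 0$ then $\vartheta^{(q+m,q){''}}>0$ on $(b,\infty)$ so $\vartheta^{(q+m,q){'}}$ is increasing there, and we redefine $\tilde\varepsilon_{2}:=(\tilde\zeta_{2}\vee 0)\wedge b$.

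The main obstacle will be the sign verification of the coefficients $\tilde K_{i}^{*}$ and $\tilde K_{i}$, and in particular proving $\tilde\zeta_{3}\le\tilde\zeta_{2}$ when $\tilde K_{1}>0$, since this is what guarantees that the monotonicity patterns in $(0,b)$ and $(b,\infty)$ glue consistently into the single threshold $\tilde\varepsilon_{2}$ rather than forcing an additional turning point. Everything else reduces to elementary calculus on sums of two exponentials, together with the root ordering $r_{1}^{*}<r_{1}<0<r_{2}<r_{2}^{*}$ and $r_{i}<r_{i}^{Y}$ inherited from the Laplace exponents of $X$, $Y$ and their shifts.
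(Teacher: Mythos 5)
Your proposal reproduces the paper's proof essentially line for line: both split into the three intervals $(-l,0)$, $(0,b)$, $(b,\infty)$, read off $\vartheta^{(q+m,q)\prime\prime}(\cdot,-l)$ from \eqref{31} as a two-exponential combination, locate its unique sign change $\tilde\zeta_i$ on each piece using the root ordering $r_1^*<r_1<0<r_2<r_2^*$ and $r_i<r_i^Y$, and then splice via clamped thresholds $\tilde\varepsilon_1,\tilde\varepsilon_2$ together with the observation $\tilde\zeta_3\le\tilde\zeta_2$, branching on the sign of the ``$\tilde K_1$'' coefficient on $(b,\infty)$ exactly as the paper does. The only small slip is the clamp on the left interval: the paper sets $\tilde\varepsilon_1:=(\tilde\zeta_1\vee -l)\wedge 0$, not $\tilde\zeta_1\wedge 0$, since in the BV case $\tilde\zeta_1>-l$ is not automatic; and your definition of $\tilde K_i^*$ already absorbs a factor $(-1)^{i+1}$, so the displayed pattern $\tilde K_2^*e^{r_2 x}-\tilde K_1^*e^{r_1 x}$ and the claim ``both positive'' need one more pass of sign bookkeeping to be mutually consistent.
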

\begin{proof}
	For $-l< x<0$, one has
	\begin{eqnarray*}
		&&\vartheta^{(q+m,q)}{''}(x, -l)=W^{(q+m)}{''}(x+l)=(1/ \mu)(G_2^* {r_2^*}^2 e^{r_2^* (x+l)}-G_1^* {r_1^*}^2 e^{r_1^* (x+l)}).
	\end{eqnarray*}
	Since $r_1^*<r_1<0<r_2<r_2^*$ and $G_1^*, G_2^*>0$, we have $W^{(q+m)}{''}(x)$ is increasing and bounded in $x\in(-l, 0)$. Denote by $\tilde{\zeta}_1:=\frac{1}{r^*} \log{\frac{G_1^* {r_1^*}^2}{G_2^*{r_2^*}^2 }}-l$ the solution to the equation $W^{(q+m)}{''}(x+l)=0$. Then, $\vartheta^{(q+m,q)}{'}(\cdot, -l)$ is decreasing on $(-l, \tilde{\varepsilon}_1)$ and increasing on $(\tilde{\varepsilon}_1, 0)$, where $\tilde{\varepsilon}_1:=(\tilde{\zeta}_1 \vee -l) \wedge 0$.

For $0< x<b$, by \eqref{31}, one gets
	\begin{eqnarray*}
		&&\vartheta^{(q+m,q)}{''}(x, -l)=(1/\mu r^*)\big(G_1 {r_1}^2 \phi_1^*(l) e^{r_1 x}-G_2 {r_2}^2 \phi_2^*(l) e^{r_2 x}\big).
	\end{eqnarray*}
	Since $G_1, G_2>0$ and $\phi_1^*(l), \phi_2^*(l)>0$, the solution to the above equation that equals zero is
	\begin{eqnarray*}
		&&\tilde{\zeta}_2:=\frac{1}{r} \log{\frac{G_1 {r_1}^2  \phi_1^*(l)}{G_2 {r_2}^2 \phi_2^*(l)}}.
	\end{eqnarray*}
For $x> b$, we deduce that
	\begin{eqnarray*}
		&&\vartheta^{(q+m,q)}{''}(x, -l)=(1/\mu r r^*)\big(G_1^Y {r_1^Y}^2 M_1^Y(l) e^{r_1^Y x}-G_2^Y {r_2^Y}^2 M_2^Y(l) e^{r_2^Y x}\big).
	\end{eqnarray*}
	By $r_1^Y<r_1<0<r_2<r_2^Y$ and $G_1^Y, G_2^Y>0$, using the definition of $\vartheta^{(q+m,q)}(x, -l)$ in \eqref{9} together with the fact that $\lim_{x\rightarrow\infty} w^{(q)}(x; -l)=\infty$, we obtain $M_2^Y(l)<0$. If $M_1^Y(l)<0$, then $\vartheta^{(q+m, q)}{''}(x, -l)$ is increasing and unbounded in $x>b$, and the solution to the above equation that equals zero is
	\begin{eqnarray*}
		&&\tilde{\zeta}_3:=\frac{1}{r^Y} \log{\frac{G_1^Y {r_1^Y}^2 M_1^Y(l)}{G_2^Y {r_2^Y}^2 M_2^Y(l)}}.
	\end{eqnarray*}
From the construction of $\tilde{\zeta}_2$ and $\tilde{\zeta}_3$, it is easy to see that $\tilde{\zeta}_3\leq\tilde{\zeta}_2$, and thereby,  for $\tilde{\varepsilon}_2:=(\tilde{\zeta}_2 \vee 0) \wedge (\tilde{\zeta}_3\vee b)$, the function $\vartheta^{(q+m,q)}{'}(\cdot, -l)$ is decreasing on $(0, \varepsilon_2)\setminus \{b\}$ and increasing on $(\varepsilon_2, \infty)\setminus \{b\}$. Otherwise, if $M_1^Y(l)>0$, then $\vartheta^{(q+m, q)}{''}(x, -l)$ is positive for all $x> b$, and hence $\vartheta^{(q+m, q)}{'}(x, -l)$ is increasing on $(b, \infty)$, and $\tilde{\varepsilon}_2$ is redefined as $\tilde{\varepsilon}_2:=(\tilde{\zeta}_2 \vee 0)\wedge b$.
\end{proof}

\begin{theorem}\label{theorem 5.4}
	For the refracted Cram\'{e}r-Lundberg model, there is a unique $\pi_{(c_1^*, c_2^*)}$ policy which is optimal for the impulse control problem \eqref{optpro}.
\end{theorem}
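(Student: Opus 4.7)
The plan is to mirror the argument used to establish Theorem \ref{theorem 5.2}, now invoking Proposition \ref{prop 5.3} in place of Proposition \ref{prop 5.1}. First I would check the structural hypotheses of Theorem \ref{theorem 4.7}: the piecewise expression \eqref{31} shows that $\vartheta^{(q+m, q)}(\cdot, -l)$ is sufficiently smooth on $[-l, \infty)$ in the weak sense of Proposition \ref{prop 4.5}, with the only possible kinks at $0$ and $b$ (recall $X$ has paths of BV here), so Theorem \ref{theorem 4.7} is directly applicable.

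Next I would localise the candidate pair. By Proposition \ref{prop 5.3}, the derivative $\vartheta^{(q+m,q)}{'}(\cdot, -l)$ is strictly decreasing on $(0, \tilde{\varepsilon}_2)$ and strictly increasing on $(\tilde{\varepsilon}_2, \infty)\setminus\{b\}$. The monotonicity hypothesis \eqref{29} of Theorem \ref{theorem 4.7} therefore forces $c_2^{*}\in[\tilde{\varepsilon}_2, \infty)$, with $c_2^{*}\neq b$ since $X$ is of BV. Combining this with the necessary first-order conditions from Proposition \ref{prop 4.2}, in particular $\vartheta^{(q+m,q)}{'}(c_1^{*}, -l)=\vartheta^{(q+m,q)}{'}(c_2^{*}, -l)=H(c_1^{*},c_2^{*})$ in case (i), the strict monotonicity on each of the two regions $(0,\tilde{\varepsilon}_2)$ and $(\tilde{\varepsilon}_2,\infty)\setminus\{b\}$ forces $c_1^{*}\in[0, \tilde{\varepsilon}_2]$.

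To obtain uniqueness I would reduce the two-dimensional minimisation of $H$ over $\mathrm{dom}(H)$ to two one-dimensional problems, following Section 4 of Loeffen\cite{Loeffen2009}. For each fixed $c_2\in[\tilde{\varepsilon}_2,\infty)\setminus\{b\}$, the map $c_1\mapsto H(c_1,c_2)$ on $[0,\tilde{\varepsilon}_2]$ has a unique critical point, namely the unique intersection of the strictly decreasing curve $c_1\mapsto \vartheta^{(q+m,q)}{'}(c_1,-l)$ with the slope $H(c_1,c_2)$; denote it $c_1^{*}(c_2)$. Plugging this back, $c_2^{*}$ is then characterised as the unique solution in $[\tilde{\varepsilon}_2,\infty)\setminus\{b\}$ of $\vartheta^{(q+m,q)}{'}(c_2^{*},-l)=\vartheta^{(q+m,q)}{'}(c_1^{*}(c_2^{*}),-l)$, and uniqueness again follows from the strict monotonicity on $(\tilde{\varepsilon}_2,\infty)\setminus\{b\}$. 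Existence of this pair in $C^{*}$ is guaranteed by Proposition \ref{prop 4.2}, and optimality by Theorem \ref{theorem 4.7}.

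The main obstacle I anticipate is ruling out the degenerate/boundary configurations (ii)--(v) of Proposition \ref{prop 4.2}. Cases with $c_2^{*}=b$ are in fact excluded a priori by Theorem \ref{theorem 4.7} in the BV setting, but one still has to verify that the interior critical point from case (i) yields a value of $H$ strictly below the boundary values $H(0,c_2)$ and $H(c_1,c_2)|_{c_2\downarrow c_1+\beta}$; the second is handled by the blow-up argument in the proof of Proposition \ref{prop 4.2}, and the first follows from the strict monotonicity of $\vartheta^{(q+m,q)}{'}$ on $(0,\tilde{\varepsilon}_2)$, which shows that the one-sided derivative $\partial_{c_1}H(0,c_2)$ has a definite sign, so the minimiser is in fact attained in the open interval. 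A brief but careful case check ensures exactly one candidate $(c_1^{*},c_2^{*})\in C^{*}$ survives, which together with Lemma \ref{lem 4.6} yields the claimed uniqueness.
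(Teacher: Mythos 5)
Your proposal is correct and follows essentially the same route as the paper: Proposition \ref{prop 5.3} supplies the unimodality of $\vartheta^{(q+m,q)}{'}(\cdot,-l)$, which combined with Theorem \ref{theorem 4.7} and Proposition \ref{prop 4.2} localises $c_1^*\le\tilde\varepsilon_2\le c_2^*$, and the two-dimensional minimisation of $H$ is then reduced to two one-dimensional problems as in Section 4 of Loeffen\cite{Loeffen2009} --- precisely the argument the paper sketches immediately before Theorem \ref{theorem 5.2} and invokes by analogy for Theorem \ref{theorem 5.4}.
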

We fix the premium rate at $\mu=3$, the discount factor at $q=0.05$, the exponential distribution parameter at $\alpha=1$, the Poisson intensity at $\eta=2$ and the transaction cost at $\beta=0.1$. Other parameters are set to $m=0.5$, $\delta=0.15$, $b=6$ and $l=6$ unless specified otherwise in the figures. Figs.\ref{fig:4}-\ref{fig:5} provide a detailed sensitivity analysis of the parameters $m$, $\delta$, $b$ and $l$ on the function $\vartheta^{(q+m,q)}(x, -l)$ and its derivative $\vartheta^{(q+m,q)}{'}(x, -l)$, respectively. In Fig.\ref{fig:6}, the numerical results of the optimal points $(c_1^*, c_2^*)$ are shown with respect to four varying parameters: $\beta\in[0,2]$, $\delta\in[0,0.25]$, $b\in[0,7]$ and $l\in[0,7]$.

\begin{figure}[!ht]
	\centering
	\includegraphics[scale=0.65]{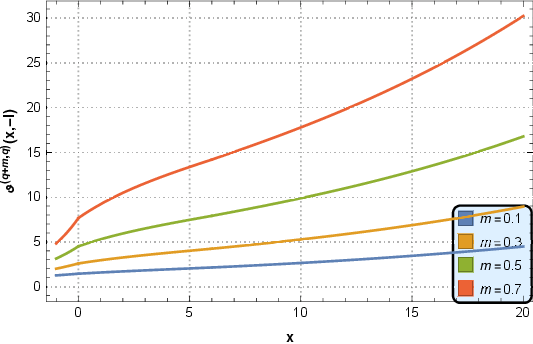}\ \ \ \ \ \ \ \ \ \ \ \ \ \
	\includegraphics[scale=0.65]{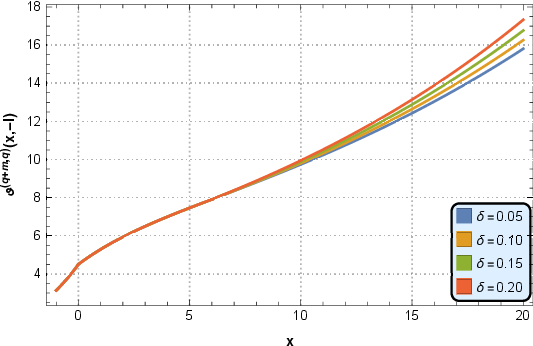}\\
	\includegraphics[scale=0.65]{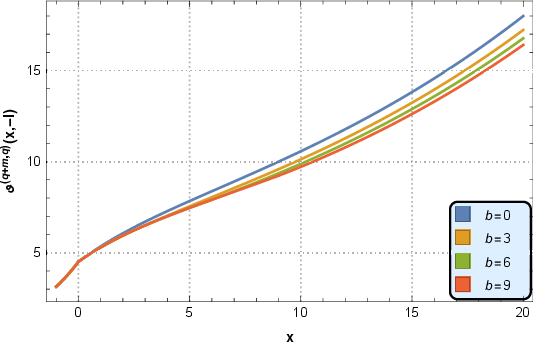}\ \ \ \ \ \ \ \ \ \ \ \ \ \
	\includegraphics[scale=0.65]{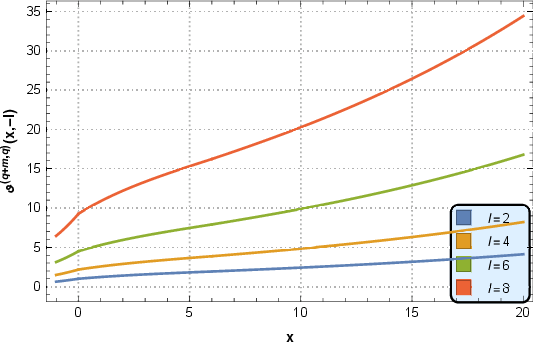}\\
	\caption{Impact of parameters on $\vartheta^{(q+m, q)}(x, -l)$ in a refracted Cram\'{e}r-Lundberg model.}
	\label{fig:4}
\end{figure}
\begin{figure}[!ht]
	\centering
	\includegraphics[scale=0.65]{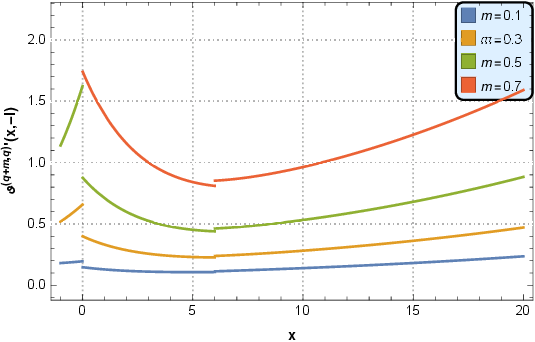}\ \ \ \ \ \ \ \ \ \ \ \ \ \
	\includegraphics[scale=0.65]{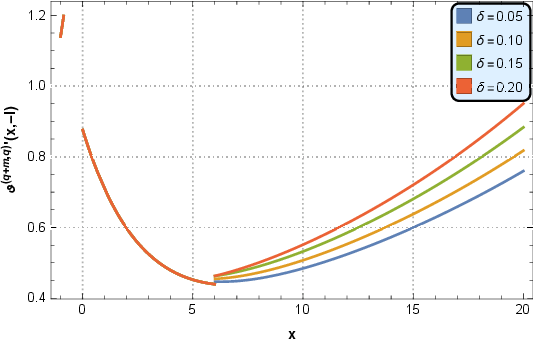}\\
	\includegraphics[scale=0.65]{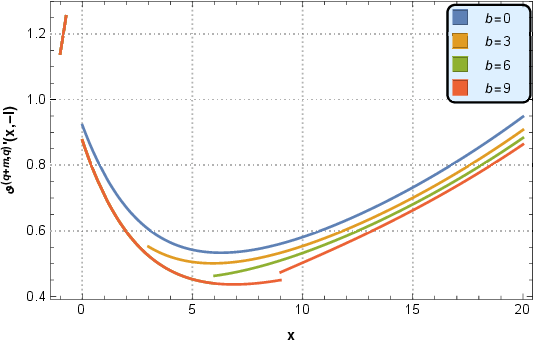}\ \ \ \ \ \ \ \ \ \ \ \ \ \
	\includegraphics[scale=0.65]{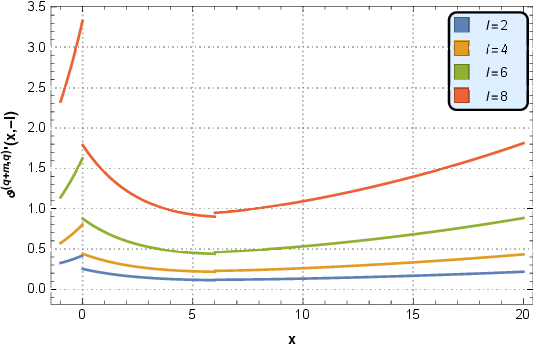}\\
	\caption{Impact of parameters on $\vartheta^{(q+m, q)}{'}(x, -l)$ in a refracted Cram\'{e}r-Lundberg model.}
	\label{fig:5}
\end{figure}
\begin{figure}[!ht]
	\centering
	\includegraphics[scale=0.65]{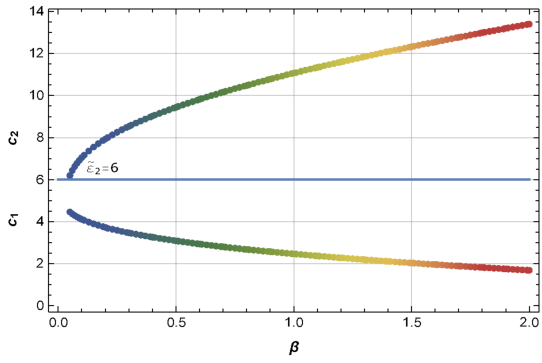}\ \ \ \ \ \ \ \ \ \ \ \ \ \
	\includegraphics[scale=0.65]{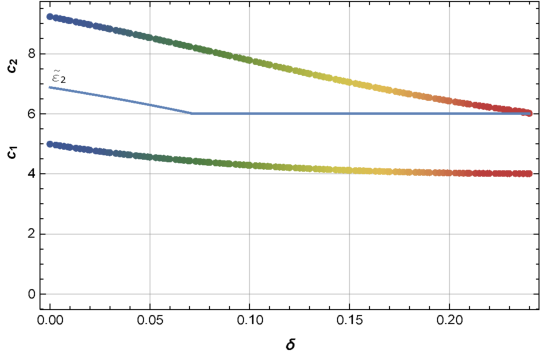}\\
	\includegraphics[scale=0.65]{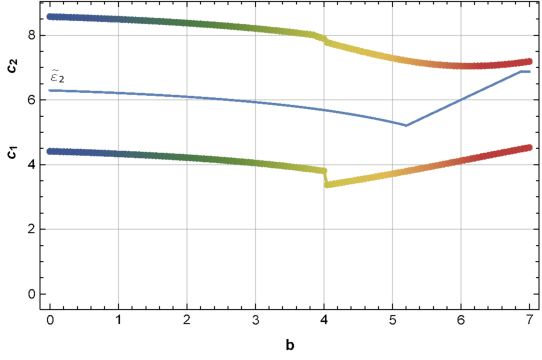}\ \ \ \ \ \ \ \ \ \ \ \ \ \
	\includegraphics[scale=0.65]{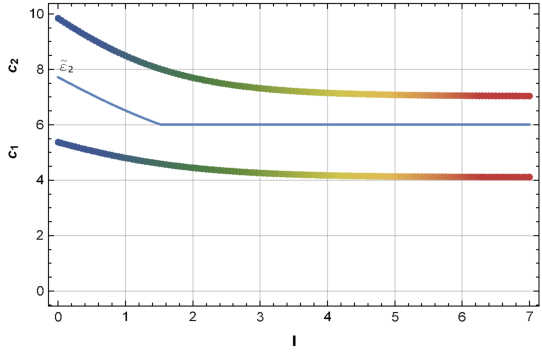}\\
	\caption{Impact of parameters on the optimal pair $(c_1^*, c_2^*)$ in a refracted Cram\'{e}r-Lundberg model.}
	\label{fig:6}
\end{figure}

Fig.\ref{fig:4} shows that $\vartheta^{(q+m, q)}(x, -l)$ monotonically increases with the initial value $x$. Fig.\ref{fig:5} demonstrates that for $x \geq 0$, as shown in Proposition \ref{prop 5.3}, the derivative $\vartheta^{(q+m, q)'}(x, -l)$ initially decreases and then increases as $x$ increases. Notably, $\vartheta^{(q+m, q)}{'}(x, -l)$ is discontinuous at points $0$ and $b$. Both $\vartheta^{(q+m, q)}(x, -l)$ and $\vartheta^{(q+m, q)}{'}(x, -l)$ increase as $m$ or $l$ increases. For $x \geq b$, these functions increase with $\delta$ and decrease with $b$. Fig.\ref{fig:6} indicates that the extreme point $\tilde{\varepsilon}_2$ is usually between $c_1^*$ and $c_2^*$, exceeding $c_1^*$ but not $c_2^*$, which corresponds with theoretical results. The influence of parameters $\beta$, $\delta$, $b$ and $l$ on the optimal impulse policy $(c_1^*, c_2^*)$ is essentially consistent with that observed in the above refracted Brownian risk model.


\section*{Conflict of interest}
All authors declare no conflicts of interest in this paper.

\appendix

\section{Proofs and more}\label{Section appen}

In the following we provide proofs of propositions, corollaries and lemmas for completeness.

\subsection{Proof of Proposition \ref{propequexp}}
\begin{proof}\label{propequexp22}
Let $e_q\sim \exp(q)$ be an exponentially distributed random variable with parameter $q\geq0$, and $L^{\pi}(t)=\int_0^t m \mathbf{1}\{U_s^{\pi} <0\} \mathrm{d} s$.
As shown in Landriault, Renaud and Zhou\cite{landriault2011}, for all $t'\geq t\geq 0$, the probability of Parisian ruin with exponential implementation delays under the controlled process $U^{\pi}$ is
\begin{align}\label{proptaul}
P\big(\tau^{\pi}_{p}>t\big|\mathcal{F}^{U^{\pi}}_{t'}\big)=e^{- L^{\pi}(t)}, \ \mbox{and} \ P\big(\tau_p^{\pi}\in \mathrm{d}t \big| \mathcal{F}^{U^{\pi}}_{t'}) = e^{-L^{\pi}(t)} m \mathbf{1}\{U_t^{\pi}<0\} \mathrm{d} t.
\end{align}
Therefore, we can write
\begin{align}\label{vj1j2}
V_{\pi}(x)
&=E_x\big[\int_0^{\tau_p^{\pi} \wedge k_{-l}^{\pi}} e^{-q t} \mathrm{d}D_{\beta}^{\pi}(t) \big]
=E_{x}\big[\int_{0}^{\infty}\mathbf{1}\{t\leq e_{q}\wedge k_{-l}^{\pi}\wedge\tau_{p}^{\pi}\}
\mathrm{d}D_{\beta}^{\pi}(t)\big] \\
& =E_{x}\big[D_{\beta}^{\pi}\big(e_{q}\wedge k_{-l}^{\pi}\wedge\tau_{p}^{\pi}\big)\big]\nonumber\\
&=E_{x}\big[D_{\beta}^{\pi}\big(e_{q}\wedge k_{-l}^{\pi}\big) \mathbf{1}\{e_{q}\wedge k_{-l}^{\pi}<\tau_{p}^{\pi}\}\big]
+E_{x}\big[D_{\beta}^{\pi}\big(\tau_{p}^{\pi}\big) \mathbf{1}\{\tau_{p}^{\pi}\leq e_{q}\wedge k_{-l}^{\pi}\}\big]=J_{1}+J_{2}.\nonumber
\end{align}
Making use of the fact \eqref{proptaul} and $D_{\beta}^{\pi}(0)=0$, one can find that
\begin{align*}
J_{1}
&=E_{x}\big[e^{-L^{\pi}(e_{q}\wedge k_{-l}^{\pi})} D^{\pi}_{\beta}(e_{q}\wedge k_{-l}^{\pi})\big]
=E_{x}\big[\int_{0}^{e_{q}\wedge k_{-l}^{\pi}}\big(e^{-L^{\pi}(t)}\mathrm{d}D_{\beta}^{\pi}(t)
-e^{-L^{\pi}(t)}D_{\beta}^{\pi}(t) \mathrm{d}L^{\pi}(t)\big)\big]\\
&=E_{x}\big[\int_{0}^{e_{q}\wedge k_{-l}^{\pi}}e^{-L^{\pi}(t)}
\big(\mathrm{d}D_{\beta}^{\pi}(t) - D_{\beta}^{\pi}(t) m\mathbf{1}\{U_t^{\pi}<0\} \mathrm{d}t\big)\big].
\end{align*}
Similarly, by conditioning on $\mathcal{F}_{\tau_{p}^{\pi}}^{U^{\pi}}$, we have
\begin{align*}
J_{2}
&= E_{x}\Big[\int_{0}^{\infty} E_{x}\big[D_{\beta}^{\pi}(t) \mathbf{1}\{t\leq e_{q}\wedge k_{-l}^{\pi}\}\big|\mathcal{F}_{t}^{U^{\pi}}\big] e^{-L^{\pi}(t)} m\mathbf{1}\{U_t^{\pi}<0\}\mathrm{d}t\Big]\\
& = m E_{x}\big[\int_{0}^{\infty}e^{-L^{\pi}(t)}D_{\beta}^{\pi}(t)\mathbf{1}\{U_t^{\pi}<0\}
\mathbf{1}\{t\leq e_{q}\wedge k_{-l}^{\pi}\} \mathrm{d}t\big].
\end{align*}
Plugging the identities above into \eqref{vj1j2} gives
\begin{align*}
V_{\pi}(x)
& =E_{x}\big[\int_{0}^{e_{q}\wedge k_{-l}^{\pi}}e^{-L^{\pi}(t)} \mathrm{d}D_{\beta}^{\pi}(t)\big]
=E_{x}\big[\int_{0}^{k_{-l}^{\pi}}\mathbf{1}\{t\leq e_{q}\}e^{-L^{\pi}(t)}\mathrm{d}D_{\beta}^{\pi}(t)\big]\\
& =E_{x}\big[\int_{0}^{k_{-l}^{\pi}} e^{-q t - \int_0^t m \mathbf{1}\{U_s^{\pi} <0\} \mathrm{d} s} \mathrm{d} D_{\beta}^{\pi}(t)\big].
\end{align*}
\end{proof}

\subsection{Proof of Proposition \ref{prop 3.1}}
\begin{proof}\label{prop3112}
The proof consists of two cases: $c\leq b$ and $c>b$. We focus on the case where $c>b$, as the argument for $c\leq b$ is similar and thus omitted. Note that the exponential delay applies when the surplus $R$ becomes negative, and for $x\in[-l, c)$, we have
\begin{align}\label{exp0xi}
E_x[e^{-q k_{c}^+}\mathbf{1}\{k_{c}^+<T\}]
&= E_x[e^{-q k_{c}^+}\mathbf{1}\{k_{c}^+<\tau_{p} \wedge k_{-l}^-\}]\nonumber\\
&= E_x[e^{-q k_{c}^+ - m \int_0^{k_c^+} \mathbf{1}\{R_s <0\} \mathrm{d} s} \mathbf{1}\{k_c^+<k_{-l}^-\}].
\end{align}

For $b\leq x< c$, considering $\{Y_t, t<\upsilon_{b}^-\}$ and $\{R_t, t<k_{b}^-\}$ have the same distribution, using the strong Markov property of $R$ and the fact that it is skip-free upward, we can obtain
	\begin{align}\label{12}
		 &E_x[e^{-q k_{c}^+}\mathbf{1}\{k_{c}^+<T\}]
=E_x[e^{-q \upsilon_{c}^+}\mathbf{1}\{\upsilon_{c}^+<\upsilon_{b}^-\}] \\
&\ \ \  +E_x\big[e^{-q \upsilon_{b}^-}\mathbf{1}\{\upsilon_{b}^-<\upsilon_{c}^+\}E_{Y_{\upsilon_{b}^-}}[e^{-q k_{b}^+}\mathbf{1}\{k_{b}^+<k_0^-\}]\big]
		\cdot E_{b}[e^{-q k_{c}^+}\mathbf{1}\{k_{c}^+<T\}]
		\nonumber \\
		& \ \ \ +E_x\Big[e^{-q \upsilon_{b}^-}\mathbf{1}\{\upsilon_{b}^-<\upsilon_{c}^+\}E_{Y_{\upsilon_{b}^-}}\big[e^{-q \tau_0^-}\mathbf{1}\{\tau_0^-<\tau_{b}^+\}E_{X_{\tau_0^-}}[e^{-q \tau_0^+}\mathbf{1}\{\tau_0^+<\tau_{-l}^-\wedge\xi\}]\big]\Big]\nonumber \\
		 &  \ \ \ \ \cdot E_0[e^{-q k_{c}^+}\mathbf{1}\{k_{c}^+<T\}].\nonumber
	\end{align}
	For $-l\leq x<b$, similarly, using the strong Markov property and considering the fact that $\{X_t, t<\tau_{b}^+\}$ and $\{R_t, t<k_{b}^+\}$ have the same distribution, we have
\begin{align}\label{13}
		&E_x[e^{-q k_{c}^+}\mathbf{1}\{k_{c}^+<T\}]
=E_x[e^{-q \tau_{b}^+}\mathbf{1}\{\tau_{b}^+<\tau_0^-\}]
		\cdot E_{b}[e^{-q k_{c}^+}\mathbf{1}\{k_{c}^+<T\}]
		\\ &\ \ \ +E_{x}\big[e^{-q \tau_0^-}\mathbf{1}\{\tau_0^-<\tau_{b}^+\}E_{X_{\tau_0^-}}[e^{-q \tau_0^+}\mathbf{1}\{\tau_0^+<\tau_{-l}^-\wedge\xi\}]\big]
		\cdot E_0[e^{-q k_{c}^+}\mathbf{1}\{k_{c}^+<T\}].	\nonumber
	\end{align}
Worth emphasizing, \eqref{12} is equivalently converted into \eqref{13} when $x<b$.
So then, \eqref{12} holds for $x\in[-l, c)$.

We now analyze the expressions involved in \eqref{12}-\eqref{13}. Since $\xi$ is independent of $x$, $\tau_0^+$ and $\tau_{-l}^-$, by \eqref{3} and \eqref{exp0xi}, we have, for $x<0$,
	\begin{align}\label{16}
		& E_x[e^{-q \tau_0^+}\mathbf{1}\{\tau_0^+<\tau_{-l}^-\wedge \xi\}]=E_x[e^{-(q+m) \tau_0^+}\mathbf{1}\{\tau_0^+<\tau_{-l}^-\}]
		=\frac{W^{(q+m)}(x+l)}{W^{(q+m)}(l)}.
	\end{align}
For $x\in [-l, b)$, following from Eqs. \eqref{6} and \eqref{16} that
	\begin{align}\label{17}
		&E_{x}\big[e^{-q \tau_0^-}\mathbf{1}\{\tau_0^-<\tau_{b}^+\}E_{X_{\tau_0^-}}[e^{-q \tau_0^+}\mathbf{1}\{\tau_0^+<\tau_{-l}^-\wedge\xi\}]\big]\nonumber\\
       & =\frac{1}{W^{(q+m)}(l)}[g^{(q+m,q)}(x,l)-\frac{W^{(q)}(x)}{W^{(q)}(b)}g^{(q+m,q)}(b,l)].
	\end{align}
For $x\in [b, c)$, on one hand, by Eqs. \eqref{5}, \eqref{8} and \eqref{10}, we obtain
	\begin{align}\label{15}
		& E_x\big[e^{-q \upsilon_{b}^-}\mathbf{1}\{\upsilon_{b}^-<\upsilon_{c}^+\}E_{Y_{\upsilon_{b}^-}}[e^{-q k_{b}^+}\mathbf{1}\{k_{b}^+<k_0^-\}]\big]
		=\frac{w^{(q)}(x;0)}{W^{(q)}(b)}-\varpi^{(q)}(x,b,c),
	\end{align}
on the other hand, through variable substitution, and using Fubini's theorem to interchange the order of expectation and integration operators, along with considering the spatial homogeneity of $Y$, by Eqs. \eqref{2}, \eqref{7}, \eqref{8} and \eqref{17}, we derive
	\begin{align}\label{18}
		& E_x\Big[e^{-q \upsilon_{b}^-}\mathbf{1}\{\upsilon_{b}^-<\upsilon_{c}^+\}E_{Y_{\upsilon_{b}^-}}\big[e^{-q \tau_0^-}\mathbf{1}\{\tau_0^-<\tau_{b}^+\}E_{X_{\tau_0^-}}[e^{-q \tau_0^+}\mathbf{1}\{\tau_0^+<\tau_{-l}^-\wedge\xi\}]\big]\Big]\nonumber		
        \\ &=\frac{1}{W^{(q+m)}(l)}\big(
        E_x[e^{-q\upsilon_{b}^-}\mathbf{1}\{\upsilon_{b}^-<\upsilon_{c}^+\}W^{(q)}(Y_{\upsilon_{b}^-}+l)]
		\nonumber		
        \\ &\ \ \ \ \ \ \ \ \ \ \ \ \ \ \ \ \ \ \  +m \int_{0}^{l}		E_x[e^{-q\upsilon_{b}^-}\mathbf{1}\{\upsilon_{b}^-<\upsilon_{c}^+\}W^{(q)}(Y_{\upsilon_{b}^-}+l-y)]W^{(q+m)}(y)\mathrm{d}y\nonumber
		\\ &\ \ \ \ \ \ \ \ \ \ \ \ \ \ \ \ \ \ \ -\frac{g^{(q+m,q)}(b,l)}{W^{(q)}(b)}E_x[e^{-q\upsilon_{b}^-}\mathbf{1}\{\upsilon_{b}^-<\upsilon_{c}^+\}W^{(q)}(Y_{\upsilon_{b}^-})]\big)
		\nonumber\\ &
=\frac{1}{W^{(q+m)}(l)}\Big(
        E_{x+l}[e^{-q\upsilon_{b+l}^-}\mathbf{1}\{\upsilon_{b+l}^-<\upsilon_{c+l}^+\}
        W^{(q)}(Y_{\upsilon_{b+l}^-})]\nonumber
		\\ &\ \ \ \ \ \ \ \ \ \ \ \ \ \ \ \ \ \ \ +m \int_{0}^{l}		E_{x+l-y}[e^{-q\upsilon_{b+l-y}^-}\mathbf{1}\{\upsilon_{b+l-y}^-<\upsilon_{c+l-y}^+\}W^{(q)}(Y_{\upsilon_{b+l-y}^-})]W^{(q+m)}(y)\mathrm{d}y\nonumber
		\\ &\ \ \ \ \ \ \ \ \ \ \ \ \ \ \ \ \ \ \ -\frac{g^{(q+m,q)}(b,l)}{W^{(q)}(b)}\big(w^{(q)}(x; 0) -\frac{\mathbb{W}^{(q)}(x-b)}{\mathbb{W}^{(q)}(c-b)}w^{(q)}(c; 0) \big)\Big)
		\nonumber\\ &
		=\frac{1}{W^{(q+m)}(l)}\Big(\vartheta^{(q+m,q)}(x, l)-\frac{\mathbb{W}^{(q)}(x-b)}{\mathbb{W}^{(q)}(c-b)}\vartheta^{(q+m,q)}(c, l) \nonumber \\ &\ \ \ \ \ \ \ \ \ \ \ \ \ \ \ \ \ \ \ -\big(\frac{w^{(q)}(x;0)}{W^{(q)}(b)}-\varpi^{(q)}(x,b,c)\big)g^{(q+m,q)}(b,l)\Big),
	\end{align}
	where $\vartheta^{(q+m,q)}(x, l)$ and $\varpi^{(q)}(x,b,c)$ are given in Eqs. \eqref{9} and \eqref{10} respectively.
Accordingly, the only remaining unknown terms in \eqref{12}-\eqref{13} are $E_0[e^{-q k_{c}^+}\mathbf{1}\{k_{c}^+<T\}]$ and $E_b[e^{-q k_{c}^+}\mathbf{1}\{k_{c}^+<T\}]$.

Assume that $X$ has paths of BV. In this case, $W^{(q)}(0)\neq 0$, and thus, setting $x=b$ into \eqref{12} and $x=0$ into \eqref{13} yields a system of equations. By Eqs. \eqref{3}, \eqref{4} and \eqref{16}-\eqref{18}, we solve this system to obtain
	\begin{eqnarray}
		&&E_{b}[e^{-q k_{c}^+}\mathbf{1}\{k_{c}^+<T\}]=\frac{g^{(q+m,q)}(b, l)}{\vartheta^{(q+m,q)}(c, l)},\label{19}
		\\&&E_0[e^{-q k_{c}^+}\mathbf{1}\{k_{c}^+<T\}]=\frac{W^{(q+m)}(l)}{\vartheta^{(q+m,q)}(c, l)}.\label{20}
	\end{eqnarray}
The case where $X$ has paths of UBV follows using the same approximating procedure as in Loeffen, Renaud and Zhou\cite{Loeffen2014}, and the result is consistent with \eqref{19}-\eqref{20}.
Finally, for $x\in[-l, c)$, plugging Eqs. \eqref{19} and \eqref{20} into \eqref{12}, and using Eqs. \eqref{4}, \eqref{15} and \eqref{18}, we can get the result in \eqref{11}.
\end{proof}

\subsection{Proof of Corollary \ref{cor00}}
\begin{proof}\label{cor2100}
By \eqref{1wqde}, \eqref{2} and \eqref{9}, we have
\begin{align*}
&\vartheta^{(q+m,q)}(x,-l)\\
&= W^{(q+m)}(x+l) - m \int_0^x W^{(q)}(x-y) W^{(q+m)}(l+y) \mathrm{d} y +\delta \mathbf{1}{\{x\geq b\}}
 \int_b^x \mathbb{W}^{(q)}(x-y) \\
& \ \ \ \cdot \Big(W^{(q+m)}{'}(y+l) - m\big(W^{(q)}(0) W^{(q+m)}(y+l) + \int_0^y W^{(q)}{'}(y-z) W^{(q+m)}(z+l)\mathrm{d}z\big) \Big)\mathrm{d}y,
\end{align*}
since
\begin{align*}
&W^{(q)}{'}(y+l) + m \int_0^l W^{(q)}{'} (y+l-z) W^{(q+m)}(z) \mathrm{d} z \\
&=\frac{\mathrm{d}}{\mathrm{d}y}\bigl(W^{(q+m)}(y+l) - m \int_0^y W^{(q)} (y-z) W^{(q+m)}(z+l) \mathrm{d} z \bigr) \\
&=W^{(q+m)}{'}(y+l) - m \bigl( W^{(q)}(0) W^{(q+m)}(y+l) + \int_0^y W^{(q)}{'}(y-z) W^{(q+m)}(z+l) \mathrm{d}z \bigr).
\end{align*}
From Kyprianou\cite{Kyprianou20142} it follows that $\lim_{l\to\infty}\tfrac{W^{(q)}(x+l)}{W^{(q)}(l)} =e^{\phi(q)x}$ and $\lim_{l\to \infty}$ $\tfrac{{W^{(q)}}{'}(x+l)}{W^{(q)}(l)}=\phi(q)e^{\phi(q)x}$.
Dividing both the numerator and the denominator in the above expression by
$W^{(q+m)}(l)$ and then letting $l\to\infty$, we obtain \eqref{ex26}.
\end{proof}

\subsection{Proof of Lemma \ref{lemdiff}}
\begin{proof}\label{appen}
According to the It\^{o}-L\'{e}vy decomposition (see Kyprianou\cite{Kyprianou20142}), the uncontrolled surplus process $X$ evolves as
\begin{align*}
&X_t = \gamma t + \sigma B_t - \int_0^t \int_{z\in(0,1)} z \tilde{M}(\mathrm{d}s, \mathrm{d}z) - \int_0^t \int_{z\in[1,\infty)} z M(\mathrm{d}s, \mathrm{d}z),
\end{align*}
while the controlled process $U^{\pi}$ takes the form
\begin{align*}
U_t^{\pi} &= (\gamma t - \delta \int_0^t \mathbf{1}\{U_s^{\pi}\geq b\} \mathrm{d} s)
 + \sigma B_t - \int_0^t \int_{z\in(0,1)} z \tilde{M}(\mathrm{d}s, \mathrm{d}z) - \int_0^t \int_{z\in[1,\infty)} z M(\mathrm{d}s, \mathrm{d}z)\\
 &\ \ \  - D_t^{\pi},
\end{align*}
where $B_t$ is an independent Brownian Motion with a zero mean and a variance coefficient of 1, the finite variation process $D^{\pi}$ is regarded as the dividend process, $M(\mathrm{d}t, \mathrm{d}z)$ is a Poisson random measure of $U_t^{\pi}$ (or $X_t$) with characteristic measure $\nu(\mathrm{d}z)\mathrm{d}t$, and $\tilde{M}(\mathrm{d}t, \mathrm{d}z)= M(\mathrm{d}t, \mathrm{d}z) - \nu(\mathrm{d}z)\mathrm{d}t$ is the corresponding compensated Poisson random measure. It is well-known that for each Borel set $A\subset [a, \infty)$ with some $a>0$, $M(t, A)=M([0, t] \times A) = \sum_{s\geq 0}^t \mathbf{1}_A\{s, X_{s-}^{\pi} -X_s^{\pi}\}$, and its compensated process $\tilde{M}(t, A)$ is a martingale. Note that $\Delta M_t := \int_{(0, \infty)} z M(\mathrm{d}t, \mathrm{d} z)$ denote the total jump magnitude at time $t$, so that $\Delta X_t = -\Delta M_t$. Accordingly, the differential form of $U_t^{\pi}$ is given by
\begin{align}\label{utpi561}
&\mathrm{d} U_t^{\pi} = (\gamma - \delta \mathbf{1}\{U_t^{\pi}\geq b\}) \mathrm{d} t
 + \sigma \mathrm{d} B_t - \int_{z\in(0,1)} z \tilde{M}(\mathrm{d}t, \mathrm{d}z) - \int_{z\in[1,\infty)} z M(\mathrm{d}t, \mathrm{d}z) - \mathrm{d} D_t^{\pi}.
\end{align}
Let $f: \mathbb{R}\rightarrow \mathbb{R}$ be a sufficiently smooth function. $M(t, A)$ and $\tilde{M}(t, A)$ can also be interpreted as
\begin{align*}
& \int_{0}^{t}\int_{z\in[a, \infty)}f(z)M(\mathrm{d}s, \mathrm{d}z)=\sum_{j\ge1}f(\xi_{j})\mathbf{1}\{\xi_{j}\geq a\} \mathbf{1}\{\tau_{j}\le t\},\\
& \int_{0}^{t}\int_{z\in[a, \infty)}f(z)\tilde{M}(\mathrm{d}s,\mathrm{d}z)=
\sum_{j\ge1}\big(f(\xi_{j})-E[f(\xi)]\big)\mathbf{1}\{\xi_{j}\geq a\} \mathbf{1}\{\tau_{j}\le t\}.
\end{align*}
By applying the Bouleau-Yor formula to the BV components (including the control-induced process $D^{\pi}$), the Meyer-It\^{o} formula to the continuous UBV component, and the It\^{o}-L\'{e}vy formula to the jump terms driven by the Poisson random measure $M$, we obtain
\begin{align*}
&e^{-qt - L^{\pi}(t)}f(U_{t}^{\pi})
\nonumber \\
&=f(x)-\int_0^{t}(q+m \mathbf{1}\{U_{s-}^{\pi}<0\}) e^{-q s - L^{\pi}(s)} f(U_{s-}^{\pi})\mathrm{d}s
		+\int_0^{t}e^{-q s- L^{\pi}(s)}f{'}(U_{s-}^{\pi})\mathrm{d}U_{s}^{\pi}\\
&  +\frac{\sigma^2}{2}\int_0^{t}e^{-q s - L^{\pi}(s)} f{''}(U_{s-}^{\pi})\mathrm{d}s
 +\sum_{0\leq s\leq t}e^{-q s - L^{\pi}(s)}\big(f(U_{s-}^{\pi}+\Delta U_s^{\pi})-f(U_{s-}^{\pi})
-f{'}(U_{s-}^{\pi})\Delta U_s^{\pi}\big),\nonumber
\end{align*}
where the final term denotes the summand over the jumps before $t$, and $\Delta U_t^{\pi}= U_t^{\pi}-U_{t-}^{\pi}$ represents the jump of the controlled process $U^{\pi}$ at time $t$. Taking into account that the final term arises either from dividend-induced jumps $\Delta D_t^{\pi}$ with $\Delta D_{t}^{\pi}=D_t^{\pi}-D_{t-}^{\pi}$, from Poisson-driven jumps $\Delta M_t = -\Delta X_t$, or from the simultaneous occurrence of both, which implies $\Delta U_t = -\Delta M_t- \Delta D_t^{\pi} = \Delta X_t- \Delta D_t^{\pi}$, and combining this with \eqref{utpi561}, we obtain
\begin{align*}
&e^{-qt - L^{\pi}(t)}f(U_{t}^{\pi})
= f(x)-\int_0^{t}(q+m \mathbf{1}\{U_{s-}^{\pi}<0\})e^{-q s - L^{\pi}(s)} f(U_{s-}^{\pi})\mathrm{d}s
		\\
 &  \ \ \ \ \ \ \ \ \ \ \ \ \ \ \ \  +\int_0^{t}e^{-q s -L^{\pi}(s)}f{'}(U_{s-}^{\pi})
(\gamma - \delta \mathbf{1}\{U_{s-}^{\pi}\geq b\}) \mathrm{d} s
 + \mathcal{M}_{t}^{(1)} \\
&  \ \ \ \ \ \ \ \ \ \ \ \ \ \ \ \
 - \int_0^{t}e^{-q s - L^{\pi}(s)}f{'}(U_{s-}^{\pi}) \big(\int_{z\in(0,1)} z \tilde{M}(\mathrm{d}s, \mathrm{d}z) + \int_{z\in[1,\infty)} z M(\mathrm{d}s, \mathrm{d}z)\big)
\\
 &  \ \ \ \ \ \ \ \ \ \ \ \ \ \ \ \  - \sum_{0\leq s\leq t}e^{-q s - L^{\pi}(s)} f{'}(U_{s-}^{\pi})\Delta D_{s}^{\pi} +\frac{\sigma^2}{2}\int_0^{t}e^{-q s - L^{\pi}(s)} f{''}(U_{s-}^{\pi})\mathrm{d}s
	  \\
& \ \ \ \ \ \ \ \ \ \ \ \ \ \ \ \ +\sum_{0\leq s\leq t}e^{-q s - L^{\pi}(s)}\big(f(U_{s-}^{\pi} - \Delta M_{s} - \Delta D_s^{\pi})-f(U_{s-}^{\pi})+ f{'}(U_{s-}^{\pi})(\Delta M_{s} +\Delta D_{s}^{\pi})\big),
\end{align*}
where $\mathcal{M}_{t}^{(1)} = \int_0^{t}e^{-q s - L^{\pi}(s)}f{'}(U_{s-}^{\pi}) \sigma \mathrm{d} B_s$. Further simplification and consolidation yields
\begin{align}\label{dfupi1}
 &e^{-qt - L^{\pi}(t)}f(U_{t}^{\pi})= f(x)-\int_0^{t}(q + m\mathbf{1}\{U_{s-}^{\pi}<0\}) e^{-q s - L^{\pi}(s)} f(U_{s-}^{\pi})\mathrm{d}s
		 \\
&  +\int_0^{t}e^{-q s - L^{\pi}(s)}f{'}(U_{s-}^{\pi})
(\gamma - \delta \mathbf{1}\{U_{s-}^{\pi}\geq b\}) \mathrm{d} s
 + \mathcal{M}_{t}^{(1)} +\frac{\sigma^2}{2}\int_0^{t}e^{-q s - L^{\pi}(s)} f{''}(U_{s-}^{\pi})\mathrm{d}s
+ \mathcal{M}_{t}^{(2)}\nonumber \\
&
+ \sum_{0\leq s\leq t}e^{-q s - L^{\pi}(s)} \big(f(U_{s-}^{\pi} - \Delta M_{s} - \Delta D_s^{\pi})-f(U_{s-}^{\pi})+ f{'}(U_{s-}^{\pi})\Delta M_{s}\mathbf{1}\{0< \Delta M_{s}< 1\} \big),\nonumber
\end{align}
where $\mathcal{M}_{t}^{(2)} = - \int_0^{t}e^{-q s - L^{\pi}(s)}f{'}(U_{s-}^{\pi}) \int_{z\in(0,1)} z \tilde{M}(\mathrm{d}s, \mathrm{d}z)$.
The final term in \eqref{dfupi1} can be equivalently expressed as
\begin{align}\label{dfupi2}
&\sum_{0\leq s\leq t}e^{-q s - L^{\pi}(s)} \big(f(U_{s-}^{\pi} - \Delta M_{s} - \Delta D_s^{\pi})-f(U_{s-}^{\pi})+ f{'}(U_{s-}^{\pi})\Delta M_{s}\mathbf{1}\{0< \Delta M_{s}< 1\} \big) \nonumber\\
&= \sum_{0\leq s\leq t}e^{-q s - L^{\pi}(s)} \big(f(U_{s-}^{\pi} - \Delta M_{s} - \Delta D_s^{\pi})-f(U_{s-}^{\pi} - \Delta M_{s})
+f(U_{s-}^{\pi} - \Delta M_{s}) - f(U_{s-}^{\pi}) \nonumber\\
& \ \ \ \ \ \ \ \ \ \ \ \ \ \ \ \ \ \ \ \ \ \ \ \ + f{'}(U_{s-}^{\pi})\Delta M_{s}\mathbf{1}\{0< \Delta M_{s}< 1\} \big)\nonumber\\
& = \sum_{0\leq s\leq t}e^{-q s - L^{\pi}(s)} \big(f(U_{s-}^{\pi} - \Delta M_{s} - \Delta D_s^{\pi})-f(U_{s-}^{\pi} - \Delta M_{s})\big)
 +\int_0^{t}e^{-q s - L^{\pi}(s)} \nonumber\\
& \ \ \ \cdot \int_{(0, \infty)} \big(f(U_{s-}^{\pi} - z)-f(U_{s-}^{\pi})+ f{'}(U_{s-}^{\pi})z\mathbf{1}\{0<z<1\}\big) \big(\tilde{M}(\mathrm{d}s, \mathrm{d} z) + \nu(\mathrm{d} z) \mathrm{d} s\big)\nonumber\\
& = \sum_{0\leq s\leq t}e^{-q s - L^{\pi}(s)} \big(f(U_{s-}^{\pi} - \Delta M_{s} - \Delta D_s^{\pi})-f(U_{s-}^{\pi} - \Delta M_{s})\big) +
\mathcal{M}_{t}^{(3)} \nonumber\\
& \ + \int_0^{t}e^{-q s - L^{\pi}(s)} \int_{(0, \infty)} \big(f(U_{s-}^{\pi} - z)-f(U_{s-}^{\pi})+ f{'}(U_{s-}^{\pi})z\mathbf{1}\{0<z<1\}\big) \nu(\mathrm{d} z) \mathrm{d} s,
\end{align}
where $\mathcal{M}_{t}^{(3)} = \int_0^{t}e^{-q s - L^{\pi}(s)} \int_{(0, \infty)} \big(f(U_{s-}^{\pi} - z)-f(U_{s-}^{\pi})+ f{'}(U_{s-}^{\pi})z \mathbf{1}\{0<z<1\}\big) \tilde{M}(\mathrm{d}s, \mathrm{d} z)$.
Substituting \eqref{dfupi2} into \eqref{dfupi1} results in
\begin{align}
& e^{-qt - L^{\pi}(t)}f(U_{t}^{\pi})= f(x)-\int_0^{t}(q+m \mathbf{1}\{U_{s-}^{\pi}<0\}) e^{-q s - L^{\pi}(s)} f(U_{s-}^{\pi})\mathrm{d}s \nonumber \\
&
 +\int_0^{t}e^{-q s - L^{\pi}(s)}
(\gamma - \delta \mathbf{1}\{U_{s-}^{\pi}\geq b\}) f{'}(U_{s-}^{\pi}) \mathrm{d} s
+ \mathcal{M}_{t}
  \nonumber
  \\ & + \frac{\sigma^2}{2} \int_0^t e^{-q s - L^{\pi}(s)} f{''}(U_{s-}^{\pi}) \mathrm{d} s
  +\sum_{0\leq s\leq t}e^{-q s - L^{\pi}(s)} \big(f(U_{s-}^{\pi} - \Delta M_{s} - \Delta D_s^{\pi})-f(U_{s-}^{\pi} - \Delta M_{s})\big)\nonumber
  \\ &
+ \int_0^{t}e^{-q s - L^{\pi}(s)} \int_{(0, \infty)} \big(f(U_{s-}^{\pi} - z)-f(U_{s-}^{\pi})+ f{'}(U_{s-}^{\pi})z \mathbf{1}\{0<z<1\}\big) \nu(\mathrm{d} z) \mathrm{d} s,\nonumber
\end{align}
where
\begin{align}\label{martinm}
&\mathcal{M}_{t} =  \mathcal{M}_{t}^{(1)} + \mathcal{M}_{t}^{(2)} + \mathcal{M}_{t}^{(3)} \\
& = \int_0^{t}e^{-q s - L^{\pi}(s)} \sigma f{'}(U_{s-}^{\pi}) \mathrm{d} B_s
+ \int_0^{t}e^{-q s - L^{\pi}(s)} \int_{(0, \infty)} \big(f(U_{s-}^{\pi} - z)-f(U_{s-}^{\pi}) \big) \tilde{M}(\mathrm{d}s, \mathrm{d} z).\nonumber
\end{align}
Since $B_t$ and $\tilde{M}(t, A)$ are both martingales, it naturally follows that $\mathcal{M}_{t}$ is a zero-mean $P_x$-martingale. By the operator $\mathcal{A}$ defined in \eqref{operator} and $\Delta X_t = -\Delta M_t$, the above expression is equivalent to \eqref{dududu}.
\end{proof}

\end{document}